\theoremstyle{theorem}
\newtheorem{theorem}{Theorem}[section]
\newtheorem{lemma}[theorem]{Lemma}
\newtheorem{definition}[theorem]{Definition}
\newtheorem{proposition}[theorem]{Proposition}
\newtheorem{corollaire}[theorem]{Corollary}
\newtheorem{remark}[theorem]{Remark}
\newtheorem{claim}[theorem]{Claim}
\title{Markov properties for the vertical edge profile in random labelled trees}
\author{Alexis Metz-Donnadieu}
\begin{document}
	\maketitle
	\begin{abstract}
		We study a broad class of random labelled trees in which integer-valued labels evolve along the edges according to increments in 
		$\{-1, 0, 1\}$. These models include e.g. branching random walks, embedded complete and incomplete binary trees, random Cayley and plane trees with uniform displacements along edges. Motivated by recent work suggesting a Markovian structure in the vertical profile of such trees, we introduce the vertical edge profile, which counts both oriented edges connecting label $k-1$ to label $k$ and oriented edges connecting label $k$ to label $k-1$.
		
		We show that the vertical edge profile forms a time-homogeneous Markov chain for a wide class of models, and this remains true (provided we enrich this process by the total mass of the tree below each label) if we condition on the total size of the tree. We give explicit transition kernels in the case of labelled incomplete binary trees, which are closely related to known enumeration formulas. To establish these results, we study a decomposition of labelled trees into excursions above and below fixed label levels, yielding a forest structure with tractable probabilistic laws.
		
		We further explain briefly how these findings connect to the theory of super-Brownian motion and the Integrated Super-Brownian Excursion (ISE). In a companion paper, we show that the vertical edge profile converges—after rescaling—to the local time and its derivative of Brownian motion indexed by the Brownian tree.
	\end{abstract}
	
	\section{Introduction}
	\subsection{Presentation of the main results}
	
	In this work, we consider some families of random labelled trees, with integer-valued labels (which may be positive or negative) such that the label increments along edges are constrained to lie in $\{-1, 0, 1\}$. The main class of models we focus on is defined as follows.
 Fix a critical or subcritical distribution $\xi$ on $\mathbb N_{\geq 0}$ and for every integer $d\geq 1$, fix a probability distribution $\eta^{(d)}$ on $\{-1, 0, 1\}^d$. We begin by sampling a Galton--Watson tree $T$ with offspring distribution $\xi$, viewed as a random plane tree so that children of every individual are ordered (note that $T$ is finite since $\xi$ is critical or subcritical). Next, conditionally on $T$ and for every vertex $v\in V(T)$ with at least one children, sample a random vector $[v]_T\in \{-1, 0, 1\}^{k_T(v)}$ with law $\eta^{(k_T(v))}$, where $k_T(v)$ denotes the number of child of $v$ in $T$. Label the edges connecting $v$ to its children using the $k_T(v)$ components of $[v]_T$ in left-to-right order, as prescribed by the plane structure of $T$. The label $\ell_T(u)$ of a vertex $u \in V(T)$ is defined as the sum of the edge labels along the ancestral path from the root (labelled $0$ by convention) to $u$. We denote by $\Pi_0$ the law of the resulting random labelled (plane) tree $(T, \ell_T)$. For $V \geq 1$, we also consider the law $\Pi_0^{(V)}$, which corresponds to the distribution of the same random tree conditioned to have $V$ edges. This class of models encompasses, for example, branching random walks, embedded complete and incomplete binary and ternary trees, random Cayley and plane trees with uniform displacements along edges. These models of labelled trees have become staple objects in probability theory and have been extensively studied both in combinatorics \cite{BousquetMelou, BousquetChapuy}, and in probability and random geometry \cite{AldousISE, Chauvin, DevroyeGW}, motivated by numerous applications to other models in statistical physics \cite{CoriVauquelin, BrownianMapMiermont, Schaeffer}.\\
	
	In this paper, we are interested in a process closely related to the so-called \emph{vertical profile} of random labelled trees. The vertical profile of a labelled tree $(t, \ell_t)$ is the family of nonnegative integers $(x_t(k))_{k\in \mathbb Z}$, where for every $k\in \mathbb Z$, $x_t(k)$ denotes the number of vertices in $(t, \ell_t)$ with label $k$. For a randomly sampled tree, the vertical profile describes the (random) distribution of vertex labels, and can be interpreted as the occupation measure of the branching process encoded by the random labelled tree. In particular, some work has been done in trying to understand the asymptotic properties of the vertical profile when the size of the tree is conditioned to grow to infinity. Under the assumption that the offspring distribution is critical and the displacement laws are centred, it has been shown in various cases \cite{MelouJanson, DevroyeJanson} that the vertical profile converges (after suitable rescaling) to a random, differentiable function $f_{\mathtt{ISE}}$. This function is the density of the Integrated SuperBrownian Excursion, a random measure first introduced by Aldous in \cite{AldousISE}. As the name suggests, this density is connected to the study of the local times of the superBrownian motion---a random measure-valued stochastic process--- that has attracted interest in past decades \cite{ SuperBrowMotionKonne, SuperBrownMotionReimers, Sugitani}. Recent work has aimed to better understand the distributional properties of the random function $f_{\mathtt{ISE}}$ and related processes. These investigations use both continuous methods (such as e.g. excursion theory for Brownian motion indexed by the Brownian tree \cite{Abraham, LeGallRiera}) as well as discrete/combinatorial methods \cite{ BousquetMelou, MelouJanson, MarckertRotation}.\\
	
	  In a recent work by Chapuy and Marckert \cite{chapuy2022notedensityiserelated}, the authors study the uniform naturally embedded incomplete binary tree model and show that the 3D process formed by the vertical profile, its discrete derivative, and its discrete integral behaves—when properly conditioned—like a Markov chain. Based on this, they conjectured that the limiting process $f_{\mathtt{ISE}}$, together with its derivative and integral, forms a Markov process governed by an SDE. This conjecture was confirmed for a closely related process in \cite{SDELocalTimesI, SDELocalTimes2}. The results in \cite{chapuy2022notedensityiserelated} only work for the binary tree model, as they relied heavily on explicit enumeration formulas for binary trees with prescribed vertical profile, established in \cite{BousquetChapuy} by Chapuy and Bousquet-Mélou. Nonetheless, these results suggest that the vertical profile could be studied effectively via Markov chain methods, an insight that partly motivated the present work.

	   The main contribution of this paper is to show that, by slightly shifting focus--- namely by considering a process counting \emph{edges} between given labels rather than \emph{vertices} with given labels---we recover a Markov structure that holds for a much broader class of models. 
	    Specifically, for every \(k \geq 1\), let \(X_k^+\) denote the number of oriented edges in \(T\) from a vertex labelled \(k-1\) to a vertex labelled \(k\),  where by convention edges are oriented away from the root. Similarly let \(X_k^-\) denote the number of oriented edges from a vertex labelled \(k\) to a vertex labelled \(k-1\). The resulting two dimensional process $(X^+_k, X^-_k)_{k\geq 1}$ can be thought of as the "\emph{vertical edge profile}" of the tree. Note that when the label increments along edges are constrained to lie in $\{-1, 1\}$, this encodes more information than the classical vertical profile since:
	\begin{align*}
	\forall k\geq 1, \ 	x_T(k)= X_k^++ X_{k+1}^-.
	\end{align*}
	We prove that, under the  unconditioned law $\Pi_0$, the process $(X_k^+, X_k^-)_{k\geq 1}$ is a time homogeneous Markov chain for the full class of models introduced earlier. Moreover, under the conditioned probabilities $\Pi_0^{(V)}$, we prove an analogous result: if we enrich the process by adding the number of edges below label $k$, we can recover a (three-dimensional) Markov chain, reminiscent of the one identified in \cite{chapuy2022notedensityiserelated}. \\
	
	To the best of our knowledge, this Markov property has not been previously observed, at least in this generality. For the binary tree model, we go further and provide a closed-form expression for the transition kernel of the associated Markov chain. This is closely related to existing product-form enumeration formulas for the number of binary trees with a given vertical edge profile, established in \cite{BousquetChapuy}. Our derivation of the kernel is independent of these results and may offer a new, more probabilistic perspective on these enumeration formulas—at least in the binary case. More generally, if similar closed-form kernels could be identified for other tree models, this could open new directions in the combinatorial enumeration of (possibly weighted) labelled trees with prescribed vertical edge profiles. \\
	
		Our analysis relies on a decomposition of labelled trees into \emph{tree excursions} above and below a fixed level $m\geq 1$, which we obtain by cutting all edges connecting vertices labelled \(m-1\) and \(m\) (in either direction). More precisely, let $T$ be a random labelled tree and fix a label $m\geq 1$. Consider an edge $e$ connecting two vertices labelled $m$ and $m-1$, and write $u \prec v$ for its endpoints, with $u$ the parent of $v$. We disconnect $e$ by duplicating the vertex $v$, assigning one copy of $v$ to each of the two resulting components. One component will consist of $v$ and all its descendants, while the other one will contain all remaining vertices, including the duplicate of $v$. Repeating this operation for each edge connecting vertices labelled $m$ and $m-1$ yields a forest of labelled plane trees. Among these, the trees rooted at a label $m$ have all their labels $\geq m-1$, and we refer to them as tree excursions above level $m$. Conversely, the trees rooted at a label $m-1$ have all their labels $\leq m$, and we call them the tree excursions below level $m$.
		 If we retain the information of how these excursions were connected in the original tree, we obtain a plane tree structure on the forest of excursions. This decomposition is key to proving the Markov properties of the vertical edge profile.  It also lays useful groundwork for the companion paper \cite{Companion}, in which we will establish scaling limits for the Markov chain $(X^+_m, X^-_m)_m$ (see the next subsection for details).\\
	 
	 The main contributions of this paper can be summarized as follows:
	\begin{itemize}
		\item[\textbullet]\textbf{Excursion forest decomposition.} We define the excursion forest decomposition and describe its law (\emph{Proposition~\ref{distributionForest}}). For several models, we also derive explicit generating functions for the offspring distributions of the excursion forests (\emph{Proposition~\ref{ExplicitForm}}).

		\item[\textbullet] \textbf{Markov property (unconditioned law).} We identify the conditional law of the tree excursions above level $m$ knowing $X_m^+$ and $X_m^-$ and use this to show that the process \((X_m^{+}, X_m^{-})_{m \geq 1}\) is a time homogeneous Markov chain under $\Pi_0$ (\emph{Theorem~\ref{discreteMarkovProperty}}).
		
		\item[\textbullet] \textbf{Markov property (conditioned law).} Under the conditioned law $\Pi_0^{(V)}$, we prove an analogous result: the 3D process $(X_m^+, X_m^-, M_m^-)_{m\geq 1}$, where $M_m^-$ denotes the number of edges whose both endpoints have a label smaller than $m$, forms a Markov chain (\emph{Proposition~\ref{discreteMarkoCondiProp}}).
		
		\item[\textbullet] \textbf{Explicit kernels in the binary case.} For the binary tree model, we compute explicit expressions for the transition kernels of the Markov chains $(X_m^+, X_m^-)_m$ and  $(X_m^+, X_m^-, M_m)_m$ under $\Pi_0$ and $\Pi_0^{(V)}$ respectively (\emph{Theorem \ref{ThmExplicitKernel}, Theorem \ref{ThmExplicitKernelCondi}}).These kernels are closely related to combinatorial formulas established in \cite{BousquetChapuy}, although our derivation is independent and offers a more probabilistic viewpoint. We explain in some detail the connection between the two approaches.
		
		\item[\textbullet] \textbf{Application to random maps.} As a motivating application, we establish a Markov property for the growth of the perimeter and number of connected components of the complement of balls in random Boltzmann quadrangulations (\emph{Proposition~\ref{AppliRandomMaps}}). A similar result was known in the continuous setting for the Brownian map (cf Proposition~11 in \cite{SDELocalTimesI}).
	\end{itemize}

	\subsection{Links with super-Brownian motion occupation measure}

	The fact that $(X^{+}_m, X^{-}_m)_m$ is Markovian is reminiscent of a theorem recently proved by Le Gall in \cite{SDELocalTimesI}, showing a Markov property related to the occupation measure of super-Brownian motion. Let us recall the statement and a bit of historical background behind this result. Write $(L_x^\infty)_{x\in \mathbb R}$ for the density of the total occupation measure of super-Brownian motion started at the Dirac measure $\delta_0$ (we refer to \cite{SDELocalTimesI, Sugitani} for a precise definition of this process). A striking property of the process \((L_x^\infty)_{x \geq 0}\) is the fact that it admits an almost surely continuous derivative \((\dot{L}_x^\infty)_{x \geq 0}\). This was first shown by Sugitani \cite{Sugitani} in the context of super-Brownian motion, and rediscovered via discrete models in later works \cite{MelouJanson, chapuy2022notedensityiserelated}. In \cite{SDELocalTimes2}, Le Gall established that the process \((L_t^\infty, \dot{L}_t^\infty)_{t \geq 0}\) is Markovian. This was further strengthened by Le Gall and Perkins \cite{SDELocalTimes2}, who derived the following stochastic differential equation:
	\begin{align*}
		\mathrm{d} \dot{L}_t^\infty = 4 \sqrt{L_t^\infty} \, \mathrm{d}B_t + g\left(L_t^\infty, \frac{1}{2} \dot{L}_t^\infty \right) \mathrm{d}t.
	\end{align*}
	The drift term involves the function 
	$g(t, y) = 8tp_t'(y)/p_t(y)$
	where \(p_t\) is the density at time \(t\) of a stable Lévy process with index \(3/2\), characterized by its Laplace exponent \(\Psi(\lambda) = \sqrt{2/3} \, \lambda^{3/2}\) and $p_t'$ is the derivative of $y\mapsto p_t(y)$. Interestingly, we show in this work that the closed-form expression for the transition kernel of the Markov chain \((X_m^+, X_m^-)_m\) in the binary tree model (Theorem~\ref{ThmExplicitKernel}) involves the distribution of a random walk in the domain of attraction of a \(3/2\)-stable law. 	This connection naturally leads to the question of whether the discrete two-dimensional process \((X^{+}_m, X^{-}_m)_{m}\) is related in the limit, after rescaling, to the continuous pair \((L_x^\infty, \dot{L}_x^\infty)_{x \geq 0}\). In the companion paper \cite{Companion}, we will show that this is indeed the case. 	More precisely, let us assume that the offspring distribution $\xi$ is critical, and that label increments along edges are independent and uniformly distributed in $\{-1, 1\}$. Consider a family of independent labelled trees $(T_k)_{k\geq 1}$ sampled according to the law $\Pi_0$, and for every $n$, let $F_n = (T_1, \dots, T_n)$ denote the forest associated with the first $n$ trees.
	For each \(k \geq 1\), define \(X_k^{+, n}\) (resp.\ \(X_k^{-, n}\)) as the total number of edges in \(F_n\) oriented from a vertex labelled \(k-1\) to a vertex labelled \(k\) (resp.\ from \(k\) to \(k-1\)). We show in \cite{Companion} that we have:
	\begin{align*}
		\left(
		\frac{X^{+, n}_{\lfloor \gamma \sqrt{n} x \rfloor} + X^{-, n}_{\lfloor \gamma \sqrt{n} x \rfloor}}{\gamma n^{3/2}}, \quad
		\frac{X^{+, n}_{\lfloor \gamma \sqrt{n} x \rfloor} - X^{-, n}_{\lfloor \gamma \sqrt{n} x \rfloor}}{n}
		\right)
		\xrightarrow[n \to \infty]{(d)}
		\left(L_x^\infty, \tfrac{1}{2} \dot{L}_x^\infty\right),
	\end{align*}
	where \(\gamma = 2 / \sigma_\xi\), with \(\sigma_\xi^2\) the variance of the offspring distribution \(\xi\).
	\section{Preliminaries}
	\subsection{General notations}
	
	Let us begin by introducing some general notations. A \emph{rooted tree} $T$ is a connected, acyclic graph on a finite vertex set $V(T)$, with a distinguished vertex $\rho_T \in V(T)$ called the \emph{root} of $T$. We say that $T$ is a (rooted) plane tree if in addition, for each vertex $v \in V(T)$, a linear order is specified on the set of its children in $T$—that is, the (possibly empty) set of vertices adjacent to $v$ that are farther from $\rho_T$ than $v$ with respect to the graph distance in $T$.
	
	For each $v \in V(T)$, we write:
	\begin{itemize}
		\item[\textbullet] $k_T(v)$ for the number of children of $v$,
		\item[\textbullet] $\pi_T(v)$ for the parent of $v$ in $T$, defined only if $v \ne \rho_T$.
	\end{itemize}
	By convention, the edges of $T$ are oriented away from the root—that is, from parents to children. This orientation induces a partial order $\preceq$ on the vertices of $T$, where $u \preceq v$ if and only if $u$ lies on the ancestral path from the root to $v$. We write $u \prec v$ to mean $u \preceq v$ and $u \ne v$. In the following, the \emph{subtree branching off} a vertex $v$ is defined as the subtree of $T$ rooted at $v$, consisting of all descendants of $v$ in $T$—that is, the subtree induced by all vertices $u \in V(T)$ such that $v \preceq u$.
	
	In this work, we are interested in models of \emph{labelled} rooted plane trees, defined as pairs $(T, \ell_T)$, where $T$ is a rooted plane tree and $\ell_T : V(T) \to \mathbb{Z}$ is an integer-valued function assigning a label to each vertex of $T$, subject to the constraint:
	\begin{align*}
		\forall v \in V(T) \setminus \{\rho_T\}, \quad \ell_T(v) - \ell_T(\pi_T(v)) \in \{-1, 0, 1\}.
	\end{align*}
	These definitions naturally extend to random labelled forests as well—that is, finite sequences of labelled rooted plane trees. In the following, we write $\mathbb T$ for the set of all labelled plane trees. If $t\in \mathbb T$, we will (somewhat abusively) keep the same notation $t$ to talk about the associated rooted plane tree (so that the notation $V(t), \rho_t,  k_t(v), \pi_t(v)\cdots$ still make sense) and we will always write $\ell_t:V(t)\to \mathbb Z$ for the associated labelling function. If $x\in \mathbb Z$, we will write $\mathbb T_x$ for the set of all labelled trees rooted at $x$:
	\begin{align*}
		\mathbb T_x= \{ t\in \mathbb T \mid \ell_t(\rho_t)=x\}.
	\end{align*}
	
	\noindent \textbf{Truncation operation.} Let $t\in \mathbb T$ and fix $l\in \mathbb Z$. The \emph{truncation} of $t$ at level $l$, denoted by $t^{[l]}$, is obtained from $t$ by deleting all vertices of $t$ with a strict ancestor of label $l$. In other words, $t^{[l]}$ is the subtree of $t$ whose vertex set is:
	\begin{align*}
		\Big\{u\in V(t) \;\mid\; \forall v\prec u,\ \ell_t(v)\neq l \Big\}.
	\end{align*}
	We view $t^{[l]}$ as a labelled tree (where vertices obviously keep the same labels as in $t$).
	We next define \emph{tree excursions}, which correspond to truncations at level $0$ of trees rooted at $1$ or $-1$, and which play an important role in the following.
	\begin{definition}
		\label{Def1}
		A \emph{positive tree excursion} (resp. \emph{negative tree excursion}) is a labelled plane tree rooted at label $1$ (resp. label $-1$), in which all labels are nonnegative (resp. nonpositive), and such that each vertex with label $0$ is a leaf. We denote by $\mathcal E_+$ (resp. $\mathcal E_-$) the set of all positive (resp. negative) tree excursions.
		If $\tau$ is a tree excursion, we define 
		\begin{align*}
			n_\tau = \text{Card}\{v \in V(\tau) \mid \ell_\tau(v) = 0\},
		\end{align*}
		the number of vertices with label $0$ in $\tau$.
	\end{definition}
	
	\subsection{The excursion forest decomposition}
	\label{Defdecomp}
	Let $T\in \mathbb T_0$ be a labelled plane tree rooted at $0$, and fix an integer $m \in \{1, 2, \cdots\}$. In this section, we describe a natural decomposition of the tree $T$ into tree excursions occurring above and below level $m$.
	
	We begin by disconnecting all edges in $T$ that cross level $m-1/2$ (i.e. with one endpoint labelled $m-1$ and the other one labelled $m$). More precisely, for each such edge $e$, let $u \prec v$ denote its endpoints, with $u$ being the parent of $v$. We disconnect the tree at $v$ by duplicating the vertex $v$ and separating the two resulting copies. One component will consist of $v$ and its descendants, while the other one will contain all remaining vertices, including the duplicate of $v$. By performing this operation for every edge connecting a vertex labelled $m-1$ to one labelled $m$ (or conversely), we obtain a forest of labelled plane trees. Consider a component $\tau$ of this forest, and let $\rho_\tau$ denote the smallest vertex of $\tau$ with respect to the order $\preceq$. We view $\tau$ as a labelled plane tree with root identified to $\rho_\tau$. There are three possibilities: 
	\begin{enumerate}
		\item We have $\rho_\tau = \rho_T$. This occurs for a unique component, which coincides with $T^{[m]}$ (recall the definition of the truncation operation from the previous section). We refer to this component as the \emph{root component of $T$ truncated at level $m$}.
		\item We have $\ell_T(\rho_\tau) = m$. Subtracting $m-1$ from each label in $\tau$ (i.e. we set $\ell_\tau(v)= \ell_T(v)-m+1$ for every $v\in V(\tau)$) yields a positive tree excursion, as defined in Definition~\ref{Def1}. We denote the unordered collection (or multiset) of all positive tree excursions obtained this way by $\{\tau_1^{m, +}, \dots, \tau_{X_m^+}^{m, +}\}$. 
		\item We have $\ell_T(\rho_\tau) = m-1$ (when $m=1$ we exclude the case $\rho_\tau = \rho_T$ which has already been considered). Subtracting $m$ from each label in this component yields a negative tree excursion. The corresponding (unordered) collection of negative tree excursions is denoted by $\{\tau_1^{m, -}, \dots, \tau_{X_m^-}^{m, -}\}$.
	\end{enumerate}
	
	\begin{figure}[h]
		\centering
		\begin{minipage}[b]{0.45 \textwidth}
			\includegraphics[width=\textwidth]{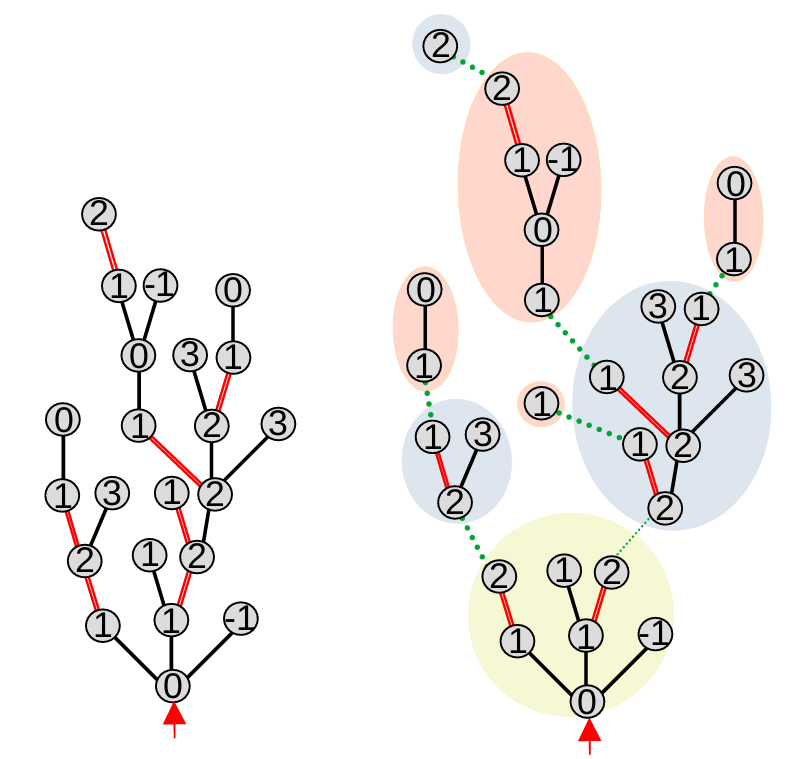}
		\end{minipage} \vrule
		\begin{minipage}[b]{0.46\textwidth}
			\includegraphics[width=\textwidth]{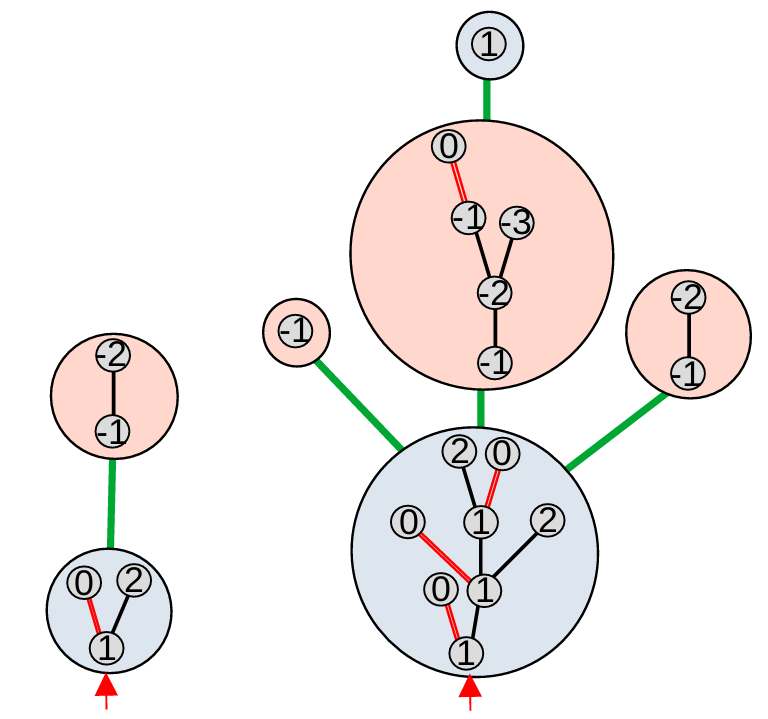}
		\end{minipage}%
		\caption{Example of the construction of the excursion forest for $m=2$. Consider a labelled plane tree rooted at $0$ (left), edges between labels $1$ and $2$ are in red in the figure. Disconnect each of these edges to get a forest of labelled rooted plane (sub) trees. The associated excursion forest at level $2$ (right) has one vertex per component of this forest that does not contain the root.}
		
	\end{figure}
	
	\noindent By construction, the number \( X_m^+ \) of positive tree excursions  obtained in case $2$ is equal to the number of oriented edges in \( T \) from a vertex labelled $m-1$ to a vertex labelled $m$. Similarly, the number \( X_m^- \) of negative excursions corresponds to the number of oriented edges in \( T \) from a vertex labelled $m$ to a vertex labelled $m-1$. The sequence $(X_m^+, X_m^-)_{m=1, 2, \cdots}$ is the main object of interest in this work.
	
	The set $\{\tau_i^{m, -}\}_{i\leq X_m^-}\cup\{\tau_j^{m, +}\}_{j\leq X_m^+}$ of all excursions inherits a genealogical structure from the initial genealogical order of $T$. We now define a forest that encodes this structure. Informally, this forest is obtained by removing the vertices of the root component $T^{[m]}$ and identifying vertices that belong to the same excursions, as defined in the previous decomposition.
	\begin{definition}[Excursion Forest]
		The \emph{excursion forest} $\mathcal{F}^m_T$ at level $m$ associated with a tree $T$ is the forest whose vertices are the tree excursions at level $m$, namely the collection $\{\tau_i^{m, -}\}_{i \leq X_m^-} \cup \{\tau_j^{m, +}\}_{j \leq X_m^+}$. In $\mathcal{F}^m_T$, a tree excursion $\tau$ is the parent of another excursion $\tau'$ if and only if the root of $\tau'$ corresponds to a leaf of $\tau$ in the original tree $T$.
		
		The forest $\mathcal{F}^m_T$ inherits a planar structure from $T$: its components are ordered, and each tree in $\mathcal{F}^m_T$ carries a planar embedding. Each component of $\mathcal{F}^m_T$ is rooted at a positive tree excursion, and along any ancestral line in $\mathcal{F}^m_T$, positive and negative excursions alternate.
	\end{definition}
	
	\noindent
	Equivalently, one may view $\mathcal{F}^m_T$ as a plane forest in which each vertex is decorated by a tree excursion. We denote by $\tilde{\mathcal{F}}^m_T$ the underlying plane forest obtained by forgetting the excursions labelling each vertex. 	
	Note moreover that the number of components of $\mathcal{F}^m_T$ equals the number of leaves labelled $m$ in the subtree $T^{[m]}$. The construction should be clear from Figure $1$.
	
	A crucial feature of this construction is the fact that the original tree $T$ can be fully recovered from the triple $(T^{[m]}, \tilde{\mathcal{F}}^m_T, (t_v)_{v \in V(\tilde{\mathcal{F}}^m_T)})$, where $(t_v)$ is the collection of tree excursions labelling the vertices of the forest. Let us make this more precise. Fix a plane forest $F$. A collection $(t_v)_{v \in V(F)} \in (\mathcal{E}_+ \cup \mathcal{E}_-)^{V(F)}$ of tree excursions indexed by the vertices of $F$ is called \emph{admissible} if for every $v \in V(F)$:
	\begin{enumerate}
		\item[\textbullet] $n_{t_v} = k_F(v)$, where $k_F(v)$ denotes the number of children of $v$ in $F$, and $n_{t_v}$ the number of leaves labelled $0$ in $t_v$;
		\item[\textbullet] $t_v$ is a positive tree excursion if $v$ is at even height in $F$, and a negative tree excursion if $v$ is at odd height.
	\end{enumerate}
	We write $M(F)$ for the set of all admissible collections $(t_v)_{v \in V(F)}$.
	
	Now let $n \geq 0$, and let $t_\rho \in \mathbb{T}_0$ be a labelled plane tree with $n$ leaves labelled $m$, such that all other vertices have a label (strictly) smaller than $m$. Let $F$ be a bicoloured plane forest with $n$ components, and choose an admissible collection $(t_v)_{v \in V(F)} \in M(F)$. Then there exists a unique labelled plane tree $T \in \mathbb{T}_0$ such that:
	\[
	T^{[m]} = t_\rho \quad \text{and} \quad \mathcal{F}_T^m = (F, (t_v)_{v \in V(F)}),
	\]
	i.e., the excursion forest of $T$ corresponds to the plane forest $F$ marked by the excursions $(t_v)_{v\in V(F)}$. In the following, we will denote the tree reconstructed this way by:
	\begin{align}
		\label{Phim}
		T = \Phi^m(t_\rho, F, (t_v)_{v \in V(F)}).
	\end{align}
	
	If $m\in\{-1, -2, \cdots\}$, we can define symmetrically the forest of excursions above and below level $m$ by disconnecting all edges that connect a vertex labelled $m$ to a vertex labelled $m+1$. The root component is again defined as $T^{[m]}$ and we obtain a set of positive and negative tree excursions just as before. We write $\check X^+_{-m}$ for the number of positive tree excursions obtained this way and $\check X^-_{-m}$ for the number of negative tree excursions. Equivalently, $\check X_{m}^+$ is the number of oriented edges that connect a vertex labelled $-m$ to a vertex labelled $-m+1$ and $\check X_m^-$ is the number of oriented edges that connect a vertex labelled $-m+1$ to a vertex labelled $-m$. 
	
	\subsection{Models of random trees}
	We now introduce the models of random labelled trees we are interested in this work. They correspond to labelled Galton--Watson trees $T$ with a given critical or subcritical offspring distribution $\xi$, such that the vector of displacements of the children of each vertex $v$ follows a fixed displacement law that may depend on the number of children of $v$. More precisely, we consider a probability measure $\xi$ on $\{0, 1, 2, \cdots\}$ such that
	\begin{align*}
		\sum_{k\geq 0} \xi(k)k\leq 1,
	\end{align*}
	and, for every $k\geq 1$, a probability measure $\eta^{(k)}$ on $\{-1, 0, 1\}^k$. Let $t$ be a labelled plane tree with labelling function $\ell_t$. For every $v\in V(t)$, we write:
	\begin{align*}
		[v]_t=\Big(\ell_t(u_1)-\ell_t(v), \cdots, \ell_t(u_{k_t(v)})-\ell_t(v)\Big),
	\end{align*} 
	where $u_1, \cdots, u_{k_t(v)}$ are the children of $v$ in left to right order. In other words, $[v]_t$ is the vector of label increments along the edges from $v$ to its children. When $v$ is a leaf, $[v]_t$ is by convention the empty array $\varnothing$ and we set in this case $\eta^{(0)}(\varnothing)=1$. We then define for every labelled plane tree $t$:
	\begin{align*}
		\Pi(t)= \prod_{v \in V(t)} \Big( \xi(k_t(v)) \cdot \eta^{(k_t(v))}([v]_t)\Big).
	\end{align*}
	\begin{definition}
		\label{Pi0}
		For every $x \in \mathbb{Z}$, the law $\Pi_x$ of a labelled Galton--Watson tree rooted at $x$, with offspring distribution $\xi$ and spatial displacement distributions $(\eta^{(k)})_{k\geq 0}$, is the probability measure corresponding to the restriction of $\Pi$ to the set $\mathbb T_x$ of labelled plane trees rooted at $x$.
	\end{definition}
	
	\noindent For all $x\in \mathbb Z$ and $l\neq x$, we will write $\Pi_x^{[l]}$ for the law of $T^{[l]}$ when $T$ is sampled according to $\Pi_x$ (recall the definition of the truncation $T^{[l]}$ from the previous section).
	To simplify notations, we write $\Pi^+= \Pi^{[0]}_1$ and $\Pi^-= \Pi^{[0]}_{-1}$. The support of the measure $\Pi^+$ (resp. $\Pi^-$) is contained in $\mathcal E_+$ (resp. $\mathcal E_-$). Moreover, a straightforward computation shows that if $\tau\in \mathcal E_+$ is a positive tree excursion, then
	\begin{align}
		\label{Piplus}
		\Pi^+(\tau) = \prod_{\substack{v \in V(\tau) \\ \ell_\tau(v) \neq 0}}\left( \xi(k_{\tau}(v)) \cdot \eta^{(k_\tau(v))}\left([v]_\tau\right)\right) ,
	\end{align}
	and the analogous expression holds for $\Pi^-$ when $\tau\in \mathcal E^-$.
	
	Let $m\geq 1$. Fix $T\in \mathbb T_0$ and recall the decomposition of $T$ in the root component $T^{[m]}$ and the tree excursions $(\tau_k^{m, +})_{1\leq k \leq X_m^+}$ and $(\tau_k^{m, -})_{1\leq k\leq X_m^-}$ from the previous section. On the set of vertices, this decomposition corresponds to the following partition of $V(T)$:
	\begin{align*}
		V(T)=	\{u\in V(T^{[m]}) \mid \ell_T(u)\neq m\}  &\cup\bigcup_{k\leq X_m^+} \left\{u\in V(\tau_k^{+, m}) \mid \ell_{\tau_k^{+, m}}(u)> 0\right\} \\
		&\cup \bigcup_{j\leq X_m^-} \left\{u\in V(\tau_j^{-, m}) \mid \ell_{\tau_j^{-, m}}(u)<0 \right\}.
	\end{align*}
	In particular, we can compute:
	\begin{align*}
		\Pi_0(T)= \prod_{\substack{v \in V(T^{[m]})\\ \ell_{T}(v)\neq m}} &\Big( \xi(k_T(v)) \cdot \eta^{(k_T(v))}([v]_T)\Big) \times \prod_{k\leq X_m^+} \prod_{\substack{u\in V(\tau_k^{+, m})\\ \ell_{\tau_k^{+, m}}(u)> 0}}\Big( \xi(k_T(u)) \cdot \eta^{(k_T(u))}([u]_T)\Big)  \\ & \times \prod_{j\leq X_m^-} \prod_{\substack{u\in V(\tau_j^{-, m}) \\ \ell_{\tau_j^{-, m}}(u)<0 }} \Big( \xi(k_T(u)) \cdot \eta^{(k_T(u))}([u]_T)\Big).
	\end{align*}
	Thanks to \eqref{Piplus}, this simplifies to:
	\begin{align}
		\label{FormulaPiPhi}
		\Pi_0(T)=\Pi^{[m]}_0(T^{[m]})\times \prod_{k\leq X_m^+} \Pi^+(\tau_j^{m, +})\times  \prod_{j\leq X_m^-} \Pi^-(\tau_j^{m, -}).
	\end{align}
	\section{The law of the excursion forest}
	\subsection{Branching property of the excursion forest}
	
	In the following, we let $T$ be a random labelled plane tree rooted at $0$ sampled according to the law $\Pi_0$ introduced in definition \ref{Pi0}. We now turn to studying the law of the excursion forests $\mathcal{F}^{k}_T$, for $k\in \mathbb Z\setminus \{0\}$.
	If $k\geq 1$ is an integer, let us write $N_k$ for the number of trees in the forest $\tilde{\mathcal F}_T^{k}$ and recall that $N_k$ also corresponds to the number of leaves labelled $k$ in the root component $T^{[k]}$. Symmetrically, write $\check N_{k}$ for the number of trees in the forest $\tilde{\mathcal F}_T^{-k}$, which corresponds to the number of leaves labelled $-k$ in the root component $T^{[-k]}$. Let $\nu_-$ denote the law of $N_{1}$. By construction, $N_{1}$ is the number of leaves with label $1$ in the tree $T^{[1]}$. If we shift each label of $T^{[1]}$ by $-1$, this yields a negative tree excursion with law $\Pi^-$. It follows that $N_{1}$ has the same law as $n_t$ when $t$ is sampled according to $\Pi^-$. In particular:
	\begin{align}
		\label{Nuplus}
		\nu_-(k)= \sum_{t\in \mathcal E_- ; \ n_t=k} \Pi^-(t),
	\end{align}
	where the sum is over all negative tree excursions with exactly $k$ vertices labelled $0$. Similarly, if $\nu_+$ is the law of $\check N_{1}$, we have:
	\begin{align}
		\label{Numoins}
		\nu_+(k)= \sum_{t\in \mathcal E_+; \ n_t=k} \Pi^+(t).
	\end{align}
	We set by convention $N_0=\check N_0=1$. 
	
	\begin{lemma}
		\label{lemmanu}
		The processes $(N_k)_{k\geq 0}$ and $(\check N_{k})_{k\geq 0}$ are Galton--Watson processes started at $1$, with respective offspring distribution $\nu_-$ and $\nu_+$.
	\end{lemma}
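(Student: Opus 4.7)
The plan is to reduce the branching property of $(N_k)_{k\geq 0}$ to the classical branching property of the Galton--Watson tree $T$, by identifying the offspring of each leaf $v$ of $T^{[k]}$ labelled $k$ with the number of first visits to level $k+1$ inside the subtree of $T$ rooted at $v$. The key structural observation is the following: for any first visit $w$ to level $k+1$ in $T$ (a vertex labelled $k+1$ with no strict ancestor labelled $k+1$), let $u(w)$ be the first vertex on the ancestral path from $\rho_T$ to $w$ that is labelled $k$. This vertex exists because labels change by increments in $\{-1, 0, 1\}$ and the sequence of labels along the path goes from $0$ (at $\rho_T$) to $k+1$ (at $w$), hence must visit every intermediate integer. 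By construction $u(w)$ has no strict ancestor labelled $k$, and is therefore a first visit to $k$, i.e.\ one of the $N_k$ leaves of $T^{[k]}$ labelled $k$. Conversely, for any such leaf $v$, a short argument using the $\pm 1$ constraint on label increments shows that all strict ancestors of $v$ have labels at most $k-1$, so the set of first visits to $k+1$ in $T$ lying in the subtree $T_v$ of $T$ rooted at $v$ coincides with the set of first visits to $k+1$ within $T_v$ itself (viewed as a labelled tree rooted at $v$).

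The second step invokes the branching property of $T$ under $\Pi_0$: conditionally on $T^{[k]}$, the subtrees $(T_{v_i})_{1 \leq i \leq N_k}$ rooted at the leaves $v_1, \dots, v_{N_k}$ of $T^{[k]}$ labelled $k$ are independent labelled Galton--Watson trees, each with law $\Pi_k$. This follows from the product form of $\Pi_0(T)$, in which the factors coming from vertices strictly below the $v_i$'s split independently of those coming from $T^{[k]}$. Combining this with the bijection above, I obtain
\begin{align*}
	N_{k+1} = \sum_{i=1}^{N_k} Z_i^{(k)},
\end{align*}
where $Z_i^{(k)}$ counts the first visits to $k+1$ within $T_{v_i}$. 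Shifting labels by $-k$ turns $T_{v_i}$ into a labelled plane tree with law $\Pi_0$, under which $Z_i^{(k)}$ becomes the number $N_1$ of leaves labelled $1$ in $T^{[1]}$, whose law is $\nu_-$ by \eqref{Nuplus}. Hence conditionally on $T^{[k]}$, the $Z_i^{(k)}$ are i.i.d.\ with law $\nu_-$, and together with $N_0 = 1$ this shows that $(N_k)_{k \geq 0}$ is a Galton--Watson process with offspring distribution $\nu_-$.

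The main difficulty in this plan is not the bijection of Step 1, which is essentially elementary, but the careful formalization of the conditional branching property in the labelled setting: one must verify that the ``fresh'' labelled trees emerging at the first visits to $k$ are genuinely distributed as $\Pi_k$, which amounts to splitting the product defining $\Pi_0$ into contributions coming from $T^{[k]}$ and those coming from its leaves' subtrees. The statement for $(\check N_k)_{k\geq 0}$ then follows by the symmetric argument, replacing first visits to $k$ by first visits to $-k$, using subtrees with law $\Pi_{-k}$, and invoking \eqref{Numoins} for the identification of the offspring distribution $\nu_+$.
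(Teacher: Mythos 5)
Your proof is correct and follows essentially the same route as the paper: decompose $T$ at the first hitting points of level $k$ (the leaves of $T^{[k]}$ labelled $k$), use the branching/stopping-line property to see that the subtrees hanging from them are, conditionally, i.i.d.\ with law $\Pi_k$, and identify the number of first visits to $k+1$ inside each with an independent copy of $N_1 \sim \nu_-$. The paper condenses your Step~1 into the identity $\mathcal N_{k+1}(T)=\bigsqcup_{i\leq N_k}\mathcal N_1(T_i^{k})$ and outsources the conditional independence to a standard stopping-line argument (citing Chauvin), which is precisely the product-splitting of $\Pi_0$ that you describe as the main point to formalize.
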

	\begin{proof}
		We prove the lemma for $(N_k)_k$, and the argument works symmetrically for $(\check N_k)_k$.
		For a labelled plane tree $t$, and an integer $k>0$ let
		\begin{align*}
			\mathcal N_k(t)= \left\{u\in V(t) \mid \ell_t(u)= k \text{ and } \forall v\prec u, \ell_t(v)\neq k\right\},
		\end{align*} 
		the set of all vertices which are \emph{first hitting points of $k$}. Clearly $N_{k}= \text{Card}(\mathcal N_{k}(T))$. Fix $k\geq 1$ and write $T_1^{k}, \cdots, T_{N_{k}}^{k}$ for the sub-trees of $T$ branching off the vertices of $\mathcal N_{k}(T)$ listed in the planar ordering. Subtract $k$ from all their labels so that the $T_i^{k}$ can be considered as elements of $\mathbb T_0$. Conditionally on $(N_{1}, \cdots N_{k})$, the trees $T_1^{k}, \cdots, T_{N_{k}}^{k}$ are distributed as $N_{k}$ independent trees with law $\Pi_0$. This follows by a standard application of the Markov property of stopping lines for (labelled) Galton--Watson trees --- see e.g. Proposition $2.1$ in \cite{Chauvin} for a similar application. If $k\geq 1$, we have clearly
		\begin{align*}
			\mathcal N_{k+1}(T)=\bigsqcup_{i\leq N_{k}} \mathcal N_1(T_{i}^{k}),
		\end{align*}
		and it follows that conditionally on $(N_1, \cdots, N_{k})$, $N_{k+1}$ is distributed as the sum of $N_{k}$ independent copies of $N_{1}$. This shows that $(N_{k})_{k\geq 0}$ is a Galton--Watson process started at $1$, with offspring distribution $\nu_-$, proving the lemma.
	\end{proof}
	\begin{remark}
		Note that the forest $\mathcal F^m_T$ is a measurable function of the trees $T_1^m,\cdots,T_{N_m}^m$. In particular, it follows from the stopping line property invoked in the proof of Lemma \ref{lemmanu} that conditionally on $N_m$, the forest $\mathcal F^m_T$ is independent of $(N_l)_{l\leq m}$ (and of the root component $T^{[m]}$).
	\end{remark}
	
	We now proceed to describe the law of ${\mathcal F}_T^m$.
	Given two offspring distributions $\mu_-$, $\mu_+$, we define an alternating two-type Galton--Watson tree with offspring distributions $(\mu_-, \mu_+)$ by requiring that the offspring distribution is $\mu_+$ for vertices at even generation and $\mu_-$ for vertices at odd generation. More precisely, for a plane tree $t$ and $v\in V(t)$ write $\epsilon_t(v)=+$ if $v$ is at even height in $t$ and $\epsilon_t(v)=-$ if $v$ is at odd height. The law of an alternating two-type Galton--Watson tree $\mathcal T$ with offspring distributions $(\mu_-, \mu_+)$ is given for any plane tree $t$ by:
	\begin{align*}
		\mathbb P(\mathcal T= t)= \prod_{v\in V(t)} \mu_{\epsilon_t(v)} (k_t(v)).
	\end{align*}
	Recall that $\tilde {\mathcal F}^m_T$ is the plane bicoloured forest obtained from $\mathcal F^m_T$ by forgetting the excursions associated to the vertices of $\mathcal F^m_T$.
	
	\begin{proposition}
		\label{distributionForest}
		Let $m\geq 1$, conditionally on $N_m$, the forest ${\tilde {\mathcal F}^m_T}$ has the law of $N_m$ independent alternating two-type Galton--Watson trees with offspring distributions $(\nu_+, \nu_-)$.

		Conditionally on $\{\tilde {\mathcal F}_T^m=F\}$, for some plane forest $F$, the random tree excursions $(t_v)_{v\in V(\tilde{\mathcal F}_T^m)}$ associated to the vertices of $\tilde {\mathcal F}_T^m$ are independent, and, for every vertex $v$ of $\tilde {\mathcal F}^m_T$, the (conditional) law of the tree excursion $t_v$ is given by: 
		\begin{align*}
			\Pi^{\epsilon_{F}(v)}\Big( \ \cdot \ \Big| \ n_\tau= k_F(v)\Big).
		\end{align*}
	\end{proposition}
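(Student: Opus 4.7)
The plan is to combine the reconstruction bijection $\Phi^m$ from \eqref{Phim} with the factorization \eqref{FormulaPiPhi} of $\Pi_0$. First I would rewrite \eqref{FormulaPiPhi} in a more symmetric form: by the alternation property recalled in Section~\ref{Defdecomp}, the positive excursions $(\tau_k^{m,+})_k$ are precisely those attached to the even-height vertices of $\tilde{\mathcal F}^m_T$, while the negative excursions $(\tau_j^{m,-})_j$ are attached to the odd-height vertices, so that
\begin{align*}
\Pi_0(T)=\Pi_0^{[m]}\bigl(T^{[m]}\bigr)\cdot\prod_{v\in V(\tilde{\mathcal F}^m_T)}\Pi^{\epsilon_{\tilde{\mathcal F}^m_T}(v)}(t_v),
\end{align*}
where $(t_v)$ denotes the collection of excursions decorating $\tilde{\mathcal F}^m_T$. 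Since $\Phi^m$ is a bijection between $\mathbb T_0$ and the set of compatible triples, this is equivalent to saying that for every $t_\rho\in\mathbb T_0$ having $n$ leaves labelled $m$ (and all other vertices labelled $<m$), every plane forest $F$ with $n$ components and every admissible $(\tau_v)\in M(F)$,
\begin{align*}
\Pi_0\bigl(\Phi^m(t_\rho,F,(\tau_v))\bigr)=\Pi_0^{[m]}(t_\rho)\cdot\prod_{v\in V(F)}\Pi^{\epsilon_F(v)}(\tau_v).
\end{align*}

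The next step is to marginalize over the decorations. For fixed $F$, admissibility requires each $\tau_v$ to be an excursion of sign $\epsilon_F(v)$ with $n_{\tau_v}=k_F(v)$, and these constraints are independent across $v$, so summing and invoking \eqref{Nuplus}-\eqref{Numoins} gives
\begin{align*}
\mathbb P\bigl(T^{[m]}=t_\rho,\ \tilde{\mathcal F}^m_T=F\bigr)=\Pi_0^{[m]}(t_\rho)\cdot\prod_{v\in V(F)}\nu_{\epsilon_F(v)}\bigl(k_F(v)\bigr).
\end{align*}
The rightmost product is exactly the probability assigned to $F$ by the law of $n$ independent alternating two-type Galton--Watson trees with offspring distributions $(\nu_+,\nu_-)$ started at type $+$, and since this quantity depends on $t_\rho$ only through the number $n$ of its leaves at level $m$, this proves the first half of the proposition (and recovers, as a byproduct, the conditional independence of $\tilde{\mathcal F}^m_T$ and $T^{[m]}$ given $N_m$ noted in the remark after Lemma~\ref{lemmanu}).

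For the second half, I would simply take the ratio of the two displays above: conditionally on $\tilde{\mathcal F}^m_T=F$,
\begin{align*}
\mathbb P\bigl((t_v)=(\tau_v)\ \big|\ \tilde{\mathcal F}^m_T=F\bigr)=\prod_{v\in V(F)}\frac{\Pi^{\epsilon_F(v)}(\tau_v)}{\nu_{\epsilon_F(v)}(k_F(v))}=\prod_{v\in V(F)}\Pi^{\epsilon_F(v)}\bigl(\tau_v\ \big|\ n_{\tau_v}=k_F(v)\bigr),
\end{align*}
where the last equality uses $\nu_\pm(k)=\Pi^\pm(n_\tau=k)$ together with the admissibility constraint $n_{\tau_v}=k_F(v)$. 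This factorized expression simultaneously yields the conditional independence of the $t_v$'s and their claimed marginals.

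The substantive work really lies in Section~\ref{Defdecomp} (the bijectivity of $\Phi^m$ and the factorization \eqref{FormulaPiPhi}); given these, the proof reduces to a clean bookkeeping computation. The only delicate point is the sign/height convention: the fact that the offspring law along the alternating two-type Galton--Watson tree is $\nu_+$ at even height and $\nu_-$ at odd height is a direct consequence of the construction, since the roots of $\tilde{\mathcal F}^m_T$ correspond to positive excursions and the signs strictly alternate along every ancestral line.
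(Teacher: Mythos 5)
Your proposal is correct and follows essentially the same route as the paper's proof: factorize $\Pi_0$ over the triple $(T^{[m]},F,(t_v))$ via \eqref{FormulaPiPhi}, sum over admissible decorations using \eqref{Nuplus}--\eqref{Numoins} to identify the law of $\tilde{\mathcal F}^m_T$, then take the ratio to obtain the conditional law of the decorations. No gaps.
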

	\begin{proof}
		Let $n\geq 0$ and fix a labelled plane tree $t_\rho\in \mathbb T_0$ with $n$ leaves labelled $m$ and such that labels of other vertices (including other leaves) are smaller than $m$. Let $F$ be a plane forest with $n$ trees. Recall the discussion at the end of Section $2.2$, and in particular the notation $\Phi^m$ in (\ref{Phim}). We have:
		\begin{align*}
			\Pi_0\left(T^{[m]}=t_\rho; \tilde {\mathcal F}_T^m= F\right) &= \sum_{(t_v)_{v\in V(F)}\in M(F)} \Pi_0\left(\Phi^m(t_\rho, F, (t_v)_{v\in V(F)})\right),
		\end{align*}
		where we recall the definition of the set $M(F)$ of all admissible collections $(t_v)_{v\in V(F)}$ of tree excursions. Using (\ref{FormulaPiPhi}), we have for every $(t_v)_{v\in V(F)}\in M(F)$:
		\begin{align}
			\label{*}
			\Pi_0\Big(\Phi(t_\rho, F, (t_v)_{v\in V(F)})\Big)= \Pi_0^{[m]}(t_\rho)\prod_{v\in V(F)} \Pi^{\epsilon_F(v)}(t_v),
		\end{align}
		so that in particular:
		\begin{align*}
			\Pi_0\left(T^{[m]}=t_\rho; \tilde {\mathcal F}_T^m= F\right)	&=\sum_{(t_v)_{v\in V(F)}\in M(F)} \Pi_0^{[m]}(t_\rho)\prod_{v\in V(F)} \Pi^{\epsilon_F(v)}(t_v)\\
			&= \Pi_0^{[m]}(t_\rho)\prod_{v\in V(F)} \left(\sum_{t\in \mathcal E_{\epsilon_F(v)} \ :\  n_{t}= k_F(v)} \Pi^{\epsilon_F(v)}(t) \right)\\
			& = \Pi_0^{[m]}(t_\rho)\prod_{v\in V(F)} \nu_{\epsilon_F(v)}(k_F(v)), 
		\end{align*}
		where we used formulas (\ref{Nuplus}) and (\ref{Numoins}) in the last line. If we sum over all choices of $t_\rho$, we get:
		\begin{align}
			\label{**}
			\Pi_0(N_m=n, \tilde{\mathcal F}_T^m=F)= \Pi_0(N_m=n)\prod_{v\in V(F)}\nu_{\epsilon_F(v)}(k_F(v)).
		\end{align}
		This implies that, conditionally on $N_m=n$, $\tilde {\mathcal F}_T^m$ is distributed as a forest of $n$ independent alternating two-type Galton--Watson trees with offspring distributions $(\nu_+, \nu_-)$.
		
		We turn to the second part of the proposition. Using (\ref{*}) and (\ref{**}), we get that conditionally on $\{\tilde{\mathcal F}_{T}^m=F, N_m=n\}$, the probability that the root component $T^{[m]}$ is equal to $t_\rho$ and that the tree excursions associated to the vertices of ${\mathcal F}_{T}^m$ exactly correspond to a fixed admissible collection $(t_v)_{v\in V(F)}\in M(F)$ is given by:
		\begin{align*}
			\frac{\Pi_0\left(\Phi^m\left(t_\rho, F, (t_v)_{v\in  V(F)}\right)\right)}{\Pi_0\left(N_m=n; \ \tilde {\mathcal F}_T^m=F \right)}	= \frac{\Pi_0^{[m]}(t_\rho)}{\Pi_0(N_m=n)} \prod_{v\in V(F)} \frac{\Pi^{\epsilon_F(v)}(t_v)}{\nu_{\epsilon_F(v)}(k_F(v))}.
		\end{align*}
		Note that the property $(t_v)_{v\in V(F)}\in M(F)$ implies $n_{t_v}= k_F(v)$ for every $v\in V(F)$, and:
		\begin{align*}
			\frac{\Pi^{\epsilon_F(v)}(t_v)}{\nu_{\epsilon_F(v)}(k_F(v))}=\Pi^{\epsilon_F(v)}\Big(t_v \ \Big| \ n_t=k_F(v)\Big).
		\end{align*} 
		This proves the proposition: if we condition on $\tilde {\mathcal F}_T^m= F$, the root component and the tree excursions $t_v$ marking the vertices of $\mathcal F_T^m$ are independent, and the law of the excursion $t_v$ marking the vertex $v\in V(F)$ is exactly given by $\Pi^{\epsilon_F(v)}\left(\cdot \mid n_t=k_F(v)\right)$. 
	\end{proof}
	
	\subsection{Explicit generating functions}
	
	In certain specific models, we can find explicit formulas for the offspring distributions $\nu_+$ and $\nu_-$. Let $g_{\nu_+}$ and $g_{\nu_-}$ denote the respective generating functions of $\nu_+$ and $\nu_-$. Using formulas (\ref{Nuplus}) and (\ref{Numoins}), we obtain:
	\begin{align*}
		g_{\nu_+}(z) &= \sum_{k \geq 0} \nu_+(k) z^k = \sum_{t \in \mathcal{E}^+} \Pi^+(t) z^{n_t}, \\
		g_{\nu_-}(z) &= \sum_{k \geq 0} \nu_-(k) z^k = \sum_{t \in \mathcal{E}^-} \Pi^-(t) z^{n_t}.
	\end{align*} 
	Let $T$ be a tree distributed according to $\Pi^+$. Denote by $d$ the number of children of the root $\rho_T$ of $T$ and conditionally on $d$, let $t_1, \dots, t_d$ be the subtrees branching from the children of the root, listed from left to right, and let $\ell_1, \dots, \ell_d$ be the labels of their roots. Conditionally on these labels, the subtrees $t_i$ are independent, and distributed according to $\Pi^{[0]}_{\ell_i}$. Let $n_{t_i}$ denote the number of leaves labelled $0$ in $t_i$, and observe that $n_t = \sum_i n_{t_i}$. Moreover:
	\begin{itemize}
		\item[\textbullet] If $\ell_i = 0$, we have $n_{t_i} = 1$.
		\item[\textbullet] Conditionally on $\{\ell_i = 1\}$, the subtree $t_i$ has distribution $\Pi^+$, and the generating function of $n_{t_i}$ is $g_{\nu_+}$.
		\item[\textbullet] Conditionally on $\{\ell_i = 2\}$, $n_{t_i}$ has the same law as the variable $\check N_2$ considered in Lemma \ref{lemmanu}, and this implies that the generating function of $n_{t_i}$ is $g_{\nu_+} \circ g_{\nu_+}$.
	\end{itemize}
	All in all, conditionally on $d$ and the labels $\ell_i$, the generating function of $n_{t_i}$ is $g_{\nu_+}^{\circ \ell_i}$, where for any measurable function $G: [0,1] \to [0,1]$ and integer $k \geq 0$, we define the $k$-th iterate $G^{\circ k} = G \circ \cdots \circ G$ ($k$ times), with the convention $G^{\circ 0}(z) = z$. It follows that $G = g_\nu$ satisfies the functional equation:
	\begin{align}
		\label{ReWritEquA}
		G(z) = \mathbb{E}_+ \left[ \prod_{\epsilon \in [\rho_t]_t} G^{\circ (1 + \epsilon)}(z) \right],
	\end{align}
	where $\mathbb E_+$ is the expectation when $t$ is sampled according to $\Pi_+$ and 
	where the product is over all \emph{components} $\epsilon$ of the vector $[\rho_t]_t$ of label increments along the edges from the root $\rho_t$ to its children (recall the definition of $[\rho_t]_t$ from section $2.3$). We can rewrite more explicitly,
	\begin{align}
		\label{EquA}G(z)= \xi(0) +\sum_{d=1}^\infty\xi(d)\sum_{i_1, \cdots, i_d \in \{-1, 0, 1\}^d}\eta^{(d)}(i_1, \cdots, i_d)\prod_{k=1}^dG^{\circ (i_k+1)}(z),
	\end{align}
	since the law of $[\rho_t]_t\in \sqcup_{d\geq 0} \{-1, 0, 1\}^d$ is given by $\Pi^+(\{[\rho_t]_t= (i_j)_{k\leq d}\})=\xi(d)\eta^{(d)}(i_1, \cdots, i_d)$.
	\begin{lemma}
		\label{UnicityFunc}
		The function $g_{\nu_+}: [0, 1] \to [0, 1]$ is the unique measurable function $G: [0, 1] \to [0, 1]$ that satisfies the functional equation (\ref{EquA}) on $[0, 1]$.
	\end{lemma}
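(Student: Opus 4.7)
The plan is to iterate (\ref{EquA}) in order to reveal successive generations of a $\Pi^+$-tree $\tau$, and then to pass to the limit using the almost-sure finiteness of $\tau$ (which follows from the (sub)criticality of $\xi$). Fix a measurable solution $G: [0, 1] \to [0, 1]$ of (\ref{EquA}); the goal is to show $G \equiv g_{\nu_+}$.

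For a positive excursion $\tau$ and $n\geq 0$, write $h_\tau(v)$ for the height of $v \in V(\tau)$, and set
\begin{align*}
A_n(\tau) = |\{v \in V(\tau) : \ell_\tau(v) = 0,\ h_\tau(v) \leq n\}|, \quad B_n(\tau) = \{v \in V(\tau) : h_\tau(v) = n,\ \ell_\tau(v) \geq 1\}.
\end{align*}
The key step is to prove, by induction on $n$, the iteration identity
\begin{align*}
G(z) = \mathbb{E}_+\!\left[\, z^{A_n(\tau)} \prod_{v \in B_n(\tau)} G^{\circ \ell_\tau(v)}(z) \,\right], \qquad n \geq 0,\ z \in [0, 1].
\end{align*}
The case $n = 0$ is trivial since $B_0(\tau) = \{\rho_\tau\}$ and $\ell_\tau(\rho_\tau) = 1$. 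For the inductive step, I apply the identity $G^{\circ \ell}(z) = \mathbb{E}_+[\prod_{\epsilon'} G^{\circ(\ell + \epsilon')}(z)]$ --- obtained from (\ref{EquA}) by substituting $w = G^{\circ(\ell - 1)}(z)$ --- to each frontier factor at a vertex $v \in B_n(\tau)$. By the branching/stopping-line property used in Lemma \ref{lemmanu}, conditionally on the truncation of $\tau$ to vertices of height at most $n$, the subtrees branching off the frontier vertices are independent and distributed as $\Pi_{\ell_\tau(v)}^{[0]}$, so the independent $\Pi^+$-expansions can be identified with the conditional laws of the actual children of each $v$ in $\tau$. Collecting the $z$-factors coming from label-$0$ children (automatically leaves by the excursion condition) into $A_{n+1}(\tau)$ and the remaining $G^{\circ \ell}$-factors into the product over $B_{n+1}(\tau)$ yields the identity at step $n+1$.

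Once this is established, the conclusion is immediate: since $\tau$ is almost surely finite, the height $h(\tau)$ is a.s. finite, so for $n > h(\tau)$ the product is empty and $A_n(\tau) = n_\tau$ (every label-$0$ vertex is a leaf, hence at finite height). As $z$ and every $G^{\circ k}(z)$ lie in $[0, 1]$, the integrand is bounded by $1$ and dominated convergence gives $G(z) = \mathbb{E}_+[z^{n_\tau}] = g_{\nu_+}(z)$. The main obstacle is the inductive step: one must carefully combine the independent $\Pi^+$-expansions introduced at different frontier vertices into a single $\Pi^+$-expectation over $\tau$ revealed up to height $n+1$, using the conditional independence of subtrees at stopping lines.
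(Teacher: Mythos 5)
Your proposal is correct and follows essentially the same route as the paper: the same frontier identity $G(z)=\mathbb{E}_+\bigl[z^{A_n(\tau)}\prod_{v\in B_n(\tau)}G^{\circ \ell_\tau(v)}(z)\bigr]$ is established by induction (the paper's $n_t^k$ and $\partial_k t$ are your $A_n$ and $B_n$), using the same substitution $z\mapsto G^{\circ(\ell-1)}(z)$ in (\ref{EquA}) and the same identification of the frontier expansions with the conditional law of the children, before concluding by a.s.\ finiteness of $\tau$ and dominated convergence. The only differences are cosmetic (starting the induction at $n=0$ rather than $n=1$, and naming the stopping-line property explicitly).
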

	
	\begin{proof}
		Let $G: [0, 1] \to [0, 1]$ be a solution to (\ref{EquA}).
		For a tree $t \in \mathcal{E}_+$ and a vertex $v \in V(t)$, let $h_t(v)$ denote the height of $v$ in $t$. For any $N \geq 1$ and $t \in \mathcal{E}_+$, define
		\[
		n^N_t := \text{Card}\left\{ v \in V(t) \,\middle|\, \ell_t(v) = 0,\ h_t(v) \leq N \right\},
		\]
		the number of vertices labelled $0$ at height at most $N$. Also define the set of vertices at height $N$ with positive labels by $
		\partial_N t := \left\{ v \in V(t) \,\middle|\, h_t(v) = N,\ \ell_t(v) > 0 \right\}.$
		Observe that $n_t^1$ counts the number of children of the root with label $0$.  Fix $z \in [0, 1]$, it follows from \eqref{EquA} (or equivalently \eqref{ReWritEquA}) that
		\[
		G(z) = \mathbb{E}_+ \left[ z^{n_t^1} \prod_{v \in \partial_1 t} G^{\circ \ell_t(v)}(z) \right].
		\]
	More generally, for every $k\geq 1$, set
		\begin{align*}
			A_k:=\mathbb{E}_+\left[z^{n_t^k} \prod_{u \in \partial_k t} G^{\circ \ell_t(u)}(z) \right],
		\end{align*} so that $G(z)=A_1$. 
		For each $j \geq 1$, applying (\ref{EquA}) with $\tilde{z} = G^{\circ (j-1)}(z)$ in place of $z$, yields
		\begin{align}
			\label{TransFunc}
			G^{\circ j}(z) = \mathbb{E}^\epsilon \left[ \prod_{i=1}^{|\epsilon|} G^{\circ (j + \epsilon_i)}(z) \right],
		\end{align}
			where the expectation is taken over the distribution of the random vector $\epsilon=(\epsilon_1, \cdots, \epsilon_{|\epsilon|})$ taking values in $\bigsqcup_{d \geq 0} \{-1, 0, 1\}^d$ which is distributed under $\mathbb P^\epsilon$ as the vector of label increments of children of the root. For $u\in \partial_kt$, we have using the identity (\ref{TransFunc}) with $j=\ell_t(u)$ that:
		\begin{align*}
			G^{\circ \ell_t(u)}(z)=\mathbb{E}^\epsilon\left[\prod_{i=1}^{|\epsilon|} G^{\circ (\ell+ \epsilon_i)}(z)\right]_{\ell=\ell_t(u)},
		\end{align*}
	It follows that, for every $k\geq 1$:
		\begin{align*}
		A_k
			&= \mathbb{E}_+\left[z^{n_t^k} \prod_{u \in \partial_k t} \mathbb{E}^\epsilon\left[\prod_{i=1}^{|\epsilon|} G^{\circ (\ell+ \epsilon_i)}(z)\right]_{\ell=\ell_t(u)} \right].
		\end{align*}
		Given $\ell_t(u)$, the vector $(\ell_t(u) + \epsilon_1, \dots, \ell_t(u) + \epsilon_{|\epsilon|})$ has the same distribution as the labels of the children of $u$ in the tree $t$. Furthermore, the collection of all children of vertices in $\partial_k t$ is precisely the set $
		\left\{ v \in V(t) \,\middle|\, h_t(v) = k+1 \right\}$.
		It follows that:
		\begin{align*}
			A_k = \mathbb{E}_+\left[z^{n_t^k} \prod_{\substack{v \in V(t) \\ h_t(v) = k+1}} G^{\circ \ell_t(v)}(z)\right] 
			= \mathbb{E}_+\left[z^{n_t^{k+1}} \prod_{v \in \partial_{k+1} t} G^{\circ \ell_t(v)}(z)\right]
			= A_{k+1}.
		\end{align*}
		The second equality holds because the set $\{ v \in V(t) \mid h_t(v) = k+1 \}$ is the disjoint union of $\partial_{k+1} t$ and the $n_t^{k+1} - n_t^k$ vertices at height $k+1$ labelled $0$.	By induction on $k$, we therefore obtain:
		\[
		G(z) =A_1=A_k= \mathbb{E}_+\left[z^{n_t^k} \prod_{v \in \partial_k t} G^{\circ \ell_t(v)}(z)\right], \quad \text{for every } k \geq 1.
		\]
		Now, observe that for any $t \in \mathcal{E}_+$, we have $n_t^k = n_t$ and $\partial_k t = \emptyset$ for all $k$ greater than the height of $t$. In particular, since both $G$ and $z$ are bounded by $1$, we may apply the dominated convergence theorem to conclude:
		\[
		\mathbb{E}_+\left[z^{n_t^k} \prod_{v \in \partial_k t} G^{\circ \ell_t(v)}(z)\right] \xrightarrow[k \to \infty]{} \mathbb{E}_+[z^{n_t}] = g_{\nu_+}(z).
		\]	
		Hence, we deduce that $G(z) = g_{\nu_+}(z)$ for all $z \in [0,1]$, completing the proof of the lemma.
	\end{proof}
	
	\noindent There are some natural models for which we manage to find an explicit expression for $g_\nu$. We record them in the following Proposition. In all cases considered below, the displacement distributions $\eta^{(d)}$ are symmetric (meaning that if $X\sim \eta^{(d)}$ then $-X\sim \eta^{(d)}$ as well). In particular, this implies that $\nu_+= \nu_-$ and we write simply $g_\nu$ to designate $g_{\nu_+}$ and $g_{\nu_-}$.
	\begin{proposition}[A few explicit generating functions]
		\label{ExplicitForm} \; 
		
		\begin{enumerate}
			\item If $\xi$ is critical geometric (i.e. $\xi(k)=2^{-k-1}$) and label displacements along edges are uniform in $\{-1, 1\}$  and independent (i.e. $\eta^{(k)}= \mathcal U(\{-1, 1\})^{\otimes k}$, for every $k\geq 1$), then:
			\begin{align}
				\label{g1}
				g_{\nu_1}(z)=\frac{-2 z^2+9z-4 + 2\sqrt{4-z}(1 - z)^{3/2}}{z(4-z)}.
			\end{align}
			\item If $\xi$ is critical geometric and label displacements along edges are uniform in $\{-1,0, 1\}$ and independent (i.e. $\eta^{(k)}= \mathcal U(\{-1, 0, 1\})^{\otimes k}$, for every $k\geq 1$), then
			\begin{align}
				\label{g2}
				g_{\nu_2}(z)=\frac{-z^2+6z-3+\sqrt{9-z}(1-z)^{3/2}}{2z}.
			\end{align}
			\item In the case of "incomplete binary trees with natural embedding", i.e. when $\xi= \frac 1 4 \delta_{0} + \frac 1 2 \delta_1+\frac 1 4 \delta_2$, and $\eta^{(1)}=\frac 12(\delta_{-1}+\delta_1)$, $\eta^{(2)}= \delta_{\{-1, 1\}}$, then
			\begin{align}
				\label{g3}
				g_{\nu_3}(z)=\frac{-  z^2+ 16 z - 3 + \sqrt{49-z} (1 - z)^{3/2}}{2(z+5)}.
			\end{align}
			\item In the case of "complete binary trees with natural embedding" i.e when $\xi= \frac 12 (\delta_0+\delta_2)$ and where $\eta^{(2)}=\delta_{\{-1, 1\}}$:
			\begin{align}
				\label{g4}
				g_{\nu_4}(z)= \frac{-z^2+10z-3+\sqrt{25-z}(1-z)^{3/2}}{2(z+2)}.
			\end{align}
		\end{enumerate}	
	\end{proposition}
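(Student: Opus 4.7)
The plan is to apply Lemma~\ref{UnicityFunc}: for each of the four models it is enough to verify that the proposed $G = g_{\nu_i}$ is a measurable function $[0,1]\to[0,1]$ satisfying the functional equation (\ref{EquA}) specialised to that model. Since every displacement law listed is symmetric, $\nu_+ = \nu_-$ and a single function $g_\nu$ plays both roles in (\ref{EquA}), so the reduction is clean.

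The first concrete step is to specialise (\ref{EquA}) in each case to a usable relation between $z$, $G(z)$ and $G(G(z))$. In cases~$3$ and~$4$ the offspring distribution has finite support and the expansion is direct, giving $(1+z)(1+G(G(z))) = 4G(z)$ and $2G(z) = 1 + z\,G(G(z))$ respectively. In cases~$1$ and~$2$ the geometric form of $\xi$ allows one to sum a geometric series over the number of children, yielding $G(z)(4 - z - G(G(z))) = 2$ and $G(z)(6 - z - G(z) - G(G(z))) = 3$ respectively. In every case one can then solve for $G(G(z))$ as an explicit rational function $H_i(G(z),z)$.

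The second step is to exploit the explicit formulas. Isolating the radical in each of (\ref{g1})--(\ref{g4}) and squaring yields a quadratic algebraic equation $P_i(G(z),z) = 0$ whose coefficients are low-degree polynomials in $z$; for instance case~$4$ reduces to
\begin{align*}
(z+2)\, Y^2 + (z^2 - 10z + 3)\, Y + (z+2)(2z-1) = 0,
\end{align*}
and the other three cases yield analogous quadratics, with convenient factorisations of both the discriminant and the constant term. The key algebraic check is then the following: substituting $W = H_i(Y,z)$ into the equation $P_i(W,Y) = 0$ (i.e.\ $P_i$ with its two arguments swapped) produces a rational expression in $(Y,z)$ which, after clearing denominators, equals a polynomial multiple of $P_i(Y,z)$ itself. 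Since $Y = G(z)$ satisfies $P_i(Y,z) = 0$, this gives the desired identity $G(G(z)) = H_i(G(z),z)$.

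The main obstacle is precisely this algebraic reduction in the last step: the manipulations are elementary but must be done case by case, and one has to track the correct polynomial factorisations carefully (for example in case~$4$ the substitution collapses to a multiple of $P_4(Y,z)$ only after factoring out $(2Y-1)$ and recognising a hidden $(z+2)^2$). A final bookkeeping step then picks out the correct branch of the square root via the normalisation $g_{\nu_i}(1) = 1$ (which holds because $\xi$ is critical, so returns to $0$ of the label process occur almost surely), removes the apparent singularities at $z = 0$ in cases~$1$ and~$2$ by a Taylor expansion check, and confirms that $g_{\nu_i}$ takes values in $[0,1]$ throughout the unit interval.
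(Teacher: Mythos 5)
Your proposal is correct and follows essentially the same route as the paper: specialise the functional equation (\ref{EquA}) to each model to get the same four relations between $G(z)$ and $G(G(z))$, verify that the closed forms satisfy them, and conclude by the uniqueness statement of Lemma~\ref{UnicityFunc}. The paper simply declares the verification ``straightforward to check,'' whereas you spell out the algebraic details (squaring out the radical and matching branches), which is a harmless elaboration of the same argument.
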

	\noindent	It is interesting to see how similar the expressions for $g_\nu$ are for these models. In particular, all these generating functions are  degree $2$ algebraic (i.e. solutions of a degree $2$ polynomial equations whose coefficients are polynomials in $z$). This might hint at the fact that there are general formulas for $g_\nu$ for larger classes of models but we leave this extension to the reader.
	
	\begin{remark}
		The second model is related to random quadrangulations, and the expression (\ref{g2}) for $g_{\nu_2}$ already appeared in Theorem 2 of \cite{Krikun} in connection with the exploration processes on the UIPQ (Uniform Infinite Planar Quadrangulation). The third and forth cases correspond to free uniform binary and complete binary trees with "natural" embedding (i.e. label increments are $-1$ for left child and $+1$ for right child). The third model (or more precisely a version conditioned on the number of vertices) is considered in \cite{chapuy2022notedensityiserelated}. The expression of the generating function for the fourth model was already found by Bousquet--Mélou \cite{BMpersonalcommunication}. Interestingly, the generating functions above are all in the domain of attraction of the same totally asymmetric $3/2$-stable law.  In fact, in the $4$ cases the expansion near the main singularity $z=1$ gives:
		\begin{align*}
			g_{\nu}(z)\underset{z\to 1}{=} 1-(1-z)+ \sqrt{\frac 23} \frac{\sigma_{\xi}}{\sigma_\eta} (1-z)^{3/2}+O((1-z)^2).
		\end{align*}
		We believe that such an expansion holds very generally whenever $\nu$ is centred and $\xi$ is critical, under an appropriate moment condition. In the companion paper \cite{Companion}, we actually prove a related result for all critical Galton--Watson trees (under an exponential moment condition) with i.i.d. displacements along edges.
	\end{remark}
	\begin{proof}[Proof of Proposition \ref{ExplicitForm}]
		For the first case, we have $\xi(k)=2^{-k-1}$ and $\eta^{(d)}(i_1, \cdots, i_d)= 2^{-d}$ for every $(i_1, \cdots, i_d) \in \{-1, 1\}^d$. In this case (\ref{EquA}) gives:
		\begin{align*}
			g_{\nu_1}(z)&= \sum_{d=0}^\infty2^{-d-1}\sum_{i_1, \cdots, i_d \in \{-1, 1\}^d}\frac{1}{2^d}\prod_{k=1}^d(\mathds 1_{i_k=1}z+ \mathds 1_{i_k=-1}g_{\nu_1}(g_{\nu_1}(z)))\\
			&= \sum_{d=0}^\infty2^{-d-1}\left(\frac{1}{2}(z+g_{\nu_1}(g_{\nu_1}(z)))\right)^d\\
			&=\frac{2}{4-z-g_{\nu_1}(g_{\nu_1}(z)))}.
		\end{align*}
		By a similar argument we get in the second case:
		\begin{align*}
			g_{\nu_2}(z)=\frac{3}{6-z-g_{\nu_2}(z)-g_{\nu_2}(g_{\nu_2}(z))}.
		\end{align*}
		
		\noindent For the third case, the right hand side of (\ref{EquA}) only has $4$ nonzero terms, and
		\begin{align*}
			g_{\nu_3}(z)&= \frac{1}{4}\left(1+ z+ g_{\nu_3}(g_{\nu_3}(z))+ zg_{\nu_3}(g_{\nu_3}(z))\right) = \frac{1}{4}(1+z)(1+g_{\nu_3}(g_{\nu_3}(z))).
		\end{align*}
		In the fourth case, only two terms are nonzero and (\ref{EquA}) gives $	g_{\nu_4}(z)= \left(1+zg_{\nu_4}(g_{\nu_4}(z))\right)/2$.
		 It is straightforward to check that the right-hand sides of (\ref{g1}), (\ref{g2}), (\ref{g3}) and (\ref{g4}) provide the respective solutions of these functional equations. Lemma \ref{UnicityFunc} completes the proof.
	\end{proof}
	
	\section{A Markov property for discrete local times}
	\subsection{The case of unconditioned trees}
	Let $T$ be a random labelled plane tree sampled according to the law $\Pi_0$.
	Recall that for $m\geq 1$, the variable $X_m^+$ (resp. $X_m^-$) is the number of oriented edges in $T$ going from a vertex with label $m-1$ to a vertex with label $m$ (resp. from a vertex with label $m$ to a vertex with label $m-1$). Equivalently, $X_m^+$ is the number of vertices at even height in $\tilde {\mathcal F}_T^m$, and $X_m^-$ is the number of vertices at odd height in $\tilde {\mathcal F}_T^m$. Remarkably, the two-dimensional process $(X_{m}^+, X_{m}^-)_{m\geq 1}$ is a Markov chain in this general setting, and the goal of this section is to prove this property.
	We begin with a combinatorial lemma that will prove useful in the proof. Here, a \emph{bicoloured plane forest} is a finite ordered collection of rooted plane trees in which each vertex is assigned a sign, either $+$ or $-$ and the signs alternate along ancestral lines. Vertices assigned with the sign $+$ are called positive vertices, and similarly for negative vertices.
	\begin{lemma}
		\label{lemmaCounting}
		Let $n\geq 1$, $p\geq n$ and $q\geq 0$ be integers, and fix nonnegative integers  $n^+_1, n^+_2, \cdots n^+_p$ and $n^-_1, \cdots, n^-_q$ such that:
		\begin{align}
			\label{relapq}
			p= n+ \sum_{i\leq q }n^-_i,  & \ \ \ \ \ \ \ \ \  \ \ \  q= \sum_{j\leq p} n^+_j.
		\end{align}
		Let $\mathcal S(n, (n_j^+)_{j\leq p}, (n_i^-)_{i\leq q})$ be the set of all bicoloured plane forests with $n$ trees rooted at positive vertices, with vertex set $\{ u_1, \cdots , u_q, v_1, \cdots, v_p\}$, such that:
		\begin{itemize}
			\item[a.] the negative vertices (i.e., the vertices at odd height in the forest) are $u_1, \cdots, u_q$ and have respectively $n^-_1, \cdots, n^-_q$ children,
			\item[b.] the positive vertices (i.e., the vertices at even height in the forest) are $v_1, \cdots, v_p$ and have respectively $n^+_1, \cdots, n^+_p$ children.
		\end{itemize}
		Then, the cardinal of $\mathcal S(n, (n_j^+)_{j\leq p}, (n_i^-)_{i\leq q})$ is $q!(p-1)! n$.
	\end{lemma}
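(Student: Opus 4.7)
The plan is to construct such a bicoloured plane forest in two stages and count each stage separately. \textbf{Stage 1:} I first assign the $q$ labelled negative vertices $u_1,\dots,u_q$ to the child-slots of the positive vertices. Collectively, the $v_j$'s offer $\sum_{j\leq p} n_j^+ = q$ ordered child-slots (by the second relation in (\ref{relapq})), so there are exactly $q!$ bijections placing the $u_a$'s into these slots, and condition (a) then forces the number of children $n_a^-$ of each $u_a$.

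\textbf{Stage 2:} Fix a stage-1 assignment, and let $u_{a_{j,1}},\dots,u_{a_{j,n_j^+}}$ denote the ordered negative children of $v_j$ obtained from stage 1. Setting $\hat c_j := \sum_{i=1}^{n_j^+} n_{a_{j,i}}^-$, we have $\sum_j \hat c_j = \sum_a n_a^- = p-n$ by the first relation in (\ref{relapq}). Completing the stage-1 assignment to a full bicoloured plane forest is then the same data as choosing a plane forest on the labelled vertices $v_1,\dots,v_p$ with $n$ ordered roots in which $v_j$ has $\hat c_j$ ordered children: this ``contracted'' forest is obtained by identifying the children of each $v_j$ with its positive grandchildren in the bicoloured forest, and conversely the original bicoloured forest is reconstructed from the contracted one by redistributing the (ordered) positive grandchildren of $v_j$ as consecutive blocks below the negative children $u_{a_{j,1}},\dots,u_{a_{j,n_j^+}}$.

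The crux is then the combinatorial identity that, for any prescribed ordered child-counts $(\hat c_j)_{j\leq p}$ summing to $p-n$, the number of plane forests on the labelled set $\{v_1,\dots,v_p\}$ with $n$ ordered roots in which $v_j$ has $\hat c_j$ ordered children equals $n(p-1)!$, independently of the $\hat c_j$. I plan to establish this via a cycle-lemma argument. The depth-first pre-order traversal of such a forest produces a permutation $\pi$ of $\{1,\dots,p\}$, and this map is a bijection between our forests and ``admissible'' permutations, meaning those $\pi$ such that $\sum_{k \leq j}(\hat c_{\pi(k)}-1) > -n$ for every $j < p$ (the usual Lukasiewicz condition for a plane forest with exactly $n$ trees). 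The cyclic group $\mathbb Z/p\mathbb Z$ acts freely on permutations by cyclic shift, partitioning them into $(p-1)!$ orbits of size $p$; within each orbit, the induced action on the DFS child-count sequence coincides with cyclic shifting of integer sequences, and the Dvoretzky--Motzkin cycle lemma asserts that exactly $n$ of the $p$ cyclic shifts of such a sequence satisfy the strict inequality above. Hence each orbit contributes $n$ admissible permutations, for a total of $n(p-1)!$.

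Multiplying the two stages yields $q! \cdot n(p-1)! = q!(p-1)!n$, as announced. I expect the main obstacle to be the clean execution of the cycle-lemma step: one must verify that DFS really is a bijection between admissible permutations and plane forests with prescribed ordered child counts (keeping track of ordered roots versus ordered children within each vertex), and that the count $n(p-1)!$ is genuinely independent of the multiset $(\hat c_j)_j$ produced by stage 1, so that the two stage-counts can be multiplied without any further weighting over stage-1 outcomes.
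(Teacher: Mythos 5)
Your proposal is correct and follows essentially the same two-stage decomposition as the paper: a factor of $q!$ for ordering the negative vertices as children of the positive ones, then a count of $n(p-1)!$ labelled plane forests on $\{v_1,\dots,v_p\}$ with prescribed child counts after contracting each $v_j$ with its negative children. The only difference is that you prove the latter count directly via the Dvoretzky--Motzkin cycle lemma, whereas the paper cites Stanley's Theorem~5.3.10 (itself a cycle-lemma consequence) and adjusts by the factors $r_\ell!$ to pass from unlabelled to labelled vertices; both routes are sound and the independence of the count from the particular child-count sequence, which you rightly flag, is exactly what makes the two stages multiply cleanly.
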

	\begin{proof}
		First, note that by construction, the children of the vertices \( v_1, \dots, v_p \) are exactly the vertices \( u_1, \dots, u_q \).  
		To construct a forest in \( \mathcal{S}(n, (n_j^+)_{j\leq p}, (n_i^-)_{i\leq q}) \), we begin by choosing the children of \( v_1, \dots, v_p \) in left-to-right order. This is equivalent to choosing one of the \( q! \) permutations of the sequence \( (u_1, \dots, u_q) \).
		
		For each \( i \in \{1, \dots, p\} \), let \( A_i \subset \{1, \dots, q\} \) be the set of all indices \( j \) such that \( u_j \) is a child of \( v_i \). Now, consider collapsing each vertex \( v_i \) together with its children into a single vertex \( w_i \), and let  
		\[
		\ell_i = \sum_{j \in A_i} n_j^-,
		\]  
		so that \( w_i \) has exactly \( \ell_i \) children. Choosing an element of \( \mathcal{S}(n, (n_j^+)_{j\leq p}, (n_i^-)_{i\leq q}) \) now amounts to choosing a plane forest with \( n \) trees and vertex set \( \{w_1, \dots, w_p\} \), where each \( w_i \) has \( \ell_i \) children. Write $\mathcal F(\ell_1, \cdots, \ell_p)$ for the set of all such forests. For every \( \ell \geq 0 \), let  
		\[
		r_\ell = \text{Card}\{ i \in \{1, \dots, p\} : \ell_i = \ell \},
		\]  
		and let \( \ell_\star = \max\{ \ell \geq 0 : r_\ell > 0 \} \). By Theorem~5.3.10 of \cite{Stanley}, the number of plane forests with \( n \) trees and \( p \) vertices such that exactly \( r_\ell \) vertices have \( \ell \) children (for every \( \ell \)) is:
		\[
		\frac{n}{p} \binom{p}{r_0, r_1, \dots, r_{\ell_\star}}.
		\]
		From such a forest we can obtain an element of $\mathcal F(\ell_1, \cdots, \ell_p)$ by calling $w_j$ a vertex with $\ell_j$ children. For every value of $\ell$ there are $r_\ell!$ ways of choosing the vertices $w_j$ such that $\ell_j=\ell$. Hence:
		\[
	\text{Card}\ \mathcal F(\ell_1, \cdots, \ell_r)=\frac{n}{p} \binom{p}{r_0, r_1, \dots, r_{\ell_\star}} r_0! \cdots r_{\ell_\star}! = n (p-1)!.
		\]
		To obtain the cardinality of \( \mathcal{S}(n, (n_j^+)_{j\leq p}, (n_i^-)_{i\leq q}) \), we multiply this result by \( q! \), accounting for the initial choice of the permutation of \( (u_1, \dots, u_q) \). We conclude that $$
		\text{Card}\ \mathcal{S}(n, (n_j^+)_{j\leq p}, (n_i^-)_{i\leq q}) = q! \cdot (p-1)! \cdot n,$$
		as claimed.
	\end{proof}
	
	 For every $r\geq 0$ and (positive or negative) tree excursions  $\tau_1, \tau_2, \cdots, \tau_r, t$, write
	 \begin{align*}
	 	C_t(\tau_1, \cdots, \tau_r)= \text{Card}\{i\leq r \mid \tau_i=t\}.
	 \end{align*} for the number of times $t$ appears in the sequence $\tau_1, \cdots, \tau_r$.  Fix $m\geq 1$ and consider the excursions $\{\tau_i^{m, -}\}_{i\leq X_m^-}\cup\{\tau_j^{m, +}\}_{j\leq X_m^+}$ of the tree $T$ at level $m$.
	For each $t \in \mathcal{E}_+$, define
	\[
	\mathtt{c}_t^{m, +} := C_t(\tau_1^{m, +}, \cdots, \tau_{X_m^+}^{m, +}),
	\]
	the number of occurrences of the tree excursion $t$ in the multiset of positive excursions above level $m$. Similarly, for $t \in \mathcal{E}_-$, we define
	\[
	\mathtt{c}_t^{m, -} := C_t(\tau_1^{m, -}, \cdots, \tau_{X_m^-}^{m, -}).
	\]
	The collections $(\mathtt{c}_t^{m, +})_{t \in \mathcal{E}_+}$ and $(\mathtt{c}_t^{m, -})_{t \in \mathcal{E}_-}$ fully encode the (unordered) collections $\{\tau_1^{m, +}, \dots, \tau_{X_m^+}^{m, +}\}$ and $\{\tau_1^{m, -}, \dots, \tau_{X_m^-}^{m, -}\}$, respectively. By construction we have the following identities:
	\begin{align*}
		X_m^+= \sum_{t\in \mathcal E_+}  \mathtt c_t^{m, +} = N_m+\sum_{t\in \mathcal E_-}  \mathtt c_t^{m, -} n_t, \quad \quad X_m^-=\sum_{t\in \mathcal E_-}  \mathtt c_t^{m, -}= \sum_{t\in \mathcal E_+}  \mathtt c_t^{m, +} n_t,
	\end{align*}
	where we recall that $n_t$ denotes the number of leaves with label $0$ in the excursion $t$.  Note moreover that $X_{m+1}^+$ and $X_{m+1}^-$ are measurable functions of $(\mathtt c_t^{m, +})_{t\in \mathcal E_+}$.
	
	\begin{theorem}[Markov property]
		\label{discreteMarkovProperty}
			Let $m\geq 1$, conditionally on $(X_m^+, X_m^-)$ the collection $(\mathtt{c}_t^{m, +})_{t \in \mathcal{E}_+}$ is independent of both the root component $T^{[m]}$ truncated at level $m$ and the collection $(\mathtt{c}_t^{m, -})_{t\in \mathcal{E}_-}$. Moreover, conditionally on $\{X_m^+= p, X_m^-=q\}$, we have:
		\begin{align}
			\label{LawExcu}
			(\mathtt c_t^{m, +})_{t\in \mathcal E_+} \overset{(d)}{=} (C_t(\tau_1, \cdots, \tau_p))_{t\in \mathcal E_+},
		\end{align}
		where $\{\tau_1, \cdots, \tau_p\}$ is the collection of $p$ independent tree excursions with law $\Pi^+$, which is conditioned on the event $\{n_{\tau_1}+ \cdots n_{\tau_p}=q\}$. In other words the unordered collection $\{\tau_1^{m, +}, \cdots, \tau_{X_m^+}^{m, +}\}$ of tree excursions above level $m$ is distributed as the unordered collection of $p$  independent tree excursions with law $\Pi^+$ conditioned on their total number of leaves labelled $0$ being $q$.
		
		 In particular, the process $(X_{k}^+, X_{k}^-)_{k \geq 1}$ is a time-homogeneous Markov process.
	\end{theorem}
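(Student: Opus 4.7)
The plan is to compute directly the joint law of $(T^{[m]}, (\mathtt{c}_t^{m,+})_{t \in \mathcal{E}_+}, (\mathtt{c}_t^{m,-})_{t \in \mathcal{E}_-})$ under $\Pi_0$, using Proposition \ref{distributionForest} together with the combinatorial count of Lemma \ref{lemmaCounting}, and to read off the claimed conditional independence from the resulting factorization. The Markov property and time-homogeneity will then follow as soft consequences once we observe that $(X_k^+, X_k^-)_{k > m}$ is a measurable function of $(\mathtt{c}_t^{m,+})$ alone, while $(X_k^+, X_k^-)_{k \leq m}$ is a measurable function of $(T^{[m]}, (\mathtt{c}_t^{m,-}))$ (an edge between labels $j-1$ and $j$ with $j \leq m$ lies either in $T^{[m]}$ or inside some negative excursion, and its contribution depends only on the multiset $(\mathtt{c}_t^{m,-})$).

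Concretely, I would fix a tree $t_\rho \in \mathbb{T}_0$ whose labels are at most $m$, set $n = N_m(t_\rho)$, and fix multisets $(c_t^+)_t, (c_t^-)_t$ compatible with $p := \sum_t c_t^+$, $q := \sum_t c_t^-$, $q = \sum_t c_t^+ n_t$ and $p = n + \sum_t c_t^- n_t$. By Proposition \ref{distributionForest},
\begin{align*}
\Pi_0\bigl(T^{[m]} = t_\rho,\ \tilde{\mathcal{F}}_T^m = F,\ (t_v)_v = (s_v)_v\bigr) = \Pi_0^{[m]}(t_\rho) \prod_{v \in V(F)} \Pi^{\epsilon_F(v)}(s_v)
\end{align*}
for any admissible collection $(s_v)_v$. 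Summing over all pairs $(F, (s_v))$ that realize the prescribed multisets, the weight always equals $\prod_t \Pi^+(t)^{c_t^+} \prod_t \Pi^-(t)^{c_t^-}$, so the only nontrivial task is counting those pairs. Enumerating first an ordered listing of the positive multiset (there are $p!/\prod_t c_t^+!$ choices) and then of the negative one ($q!/\prod_t c_t^-!$ choices), each such ordering pins down the degree sequence of $F$; Lemma \ref{lemmaCounting} furnishes exactly $q!(p-1)!\,n$ admissible forests per ordering. Multiplying everything together yields
\begin{align*}
\Pi_0\bigl(T^{[m]} = t_\rho,\ (\mathtt{c}_t^{m,+})=(c_t^+),\ (\mathtt{c}_t^{m,-})=(c_t^-)\bigr) = \Pi_0^{[m]}(t_\rho)\cdot n (p-1)!\,q! \cdot A_p(c^+) \cdot B_q(c^-),
\end{align*}
where $A_p(c^+) := \frac{p!}{\prod_t c_t^+!} \prod_t \Pi^+(t)^{c_t^+}$ is precisely the probability that $p$ i.i.d.\ $\Pi^+$-distributed excursions produce the multiset $(c_t^+)_t$, and similarly for $B_q(c^-)$.

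The theorem now drops out. Conditioning on $\{X_m^+=p,\,X_m^-=q\}$, the factor $A_p(c^+)$ depends only on $(c_t^+)$ while every other factor depends only on $(T^{[m]}, (c_t^-))$, and the coupling constraint $p = n + \sum c_t^- n_t$ is carried entirely by the latter group. Normalising, this makes $(\mathtt{c}_t^{m,+})$ conditionally independent of $(T^{[m]}, (\mathtt{c}_t^{m,-}))$ given $(X_m^+, X_m^-)$, with conditional law obtained by renormalising $A_p$ on $\{\sum c_t^+ n_t = q\}$---exactly the law of $p$ i.i.d.\ $\Pi^+$-excursions conditioned on $n_{\tau_1}+\cdots+n_{\tau_p}=q$. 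Together with the measurability observations of the first paragraph, this gives the Markov property, and time-homogeneity is manifest since the conditional law above involves only $(p, q)$, not $m$.

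The main obstacle I anticipate is the combinatorial bookkeeping in the second step: the argument hinges crucially on the striking feature of Lemma \ref{lemmaCounting} that the count $q!(p-1)!n$ depends only on $(n, p, q)$ and not on the specific degree sequence of the forest. It is precisely this uniformity that allows the sum over orderings of the two multisets to decouple into a product of $A_p(c^+)$ and $B_q(c^-)$, and hence into the desired conditional independence; without it, the $(c_t^+)$- and $(c_t^-)$-dependences of the joint law would remain entangled through the degree constraints of the forest.
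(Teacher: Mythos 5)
Your proposal follows essentially the same route as the paper's proof: compute the joint law of $(T^{[m]},(\mathtt{c}_t^{m,+})_t,(\mathtt{c}_t^{m,-})_t)$ by summing the reconstruction formula over all decorated forests realizing the prescribed multisets, use Lemma~\ref{lemmaCounting} for the count, read off the factorization, and identify the renormalised factor $A_p$ with the law of $p$ i.i.d.\ $\Pi^+$-excursions conditioned on $\sum n_{\tau_k}=q$. The structure of the argument, the role of the uniformity of the count $q!(p-1)!\,n$, and the measurability observations needed to pass from conditional independence to the Markov property are all correct and match the paper.

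There is, however, a counting slip in your enumeration of the pairs $(F,(s_v)_v)$. You multiply the number of ordered listings of the two multisets, $\frac{p!}{\prod_t c_t^+!}\cdot\frac{q!}{\prod_t c_t^-!}$, by the $q!(p-1)!\,n$ forests on the \emph{named} vertex set $\{u_1,\dots,u_q,v_1,\dots,v_p\}$ furnished by Lemma~\ref{lemmaCounting}. But a pair (ordered listing, named forest) is the same thing as a choice of bijection between the names and the positive/negative vertices of an unnamed decorated forest, so each element of the target set is hit exactly $p!\,q!$ times. The correct count is therefore $\frac{n\,(p-1)!\,q!}{\prod_t c_t^+!\,\prod_t c_t^-!}$ (fix one ordering and quotient by the $\prod_t c_t^+!\,\prod_t c_t^-!$ permutations of identical decorations, as the paper does), and your displayed identity for the joint probability is inflated by the factor $p!\,q!$. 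This is harmless for everything you deduce afterwards---the spurious factor depends only on $(p,q)$ and cancels upon conditioning on $\{X_m^+=p,\,X_m^-=q\}$, so the conditional independence, the identification of the conditional law, and the time-homogeneous Markov property all stand---but the intermediate formula should be corrected if the joint law itself is to be quoted.
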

	
	\begin{proof}[Proof of Theorem \ref{discreteMarkovProperty}]
		Fix a labelled plane tree $t_\rho\in \mathbb T_0$ with exactly $n$ leaves labelled $m$ and such that all other vertices have label smaller than or equal to $m-1$. Fix $p\in\{0, 1,2, \cdots\}$ and $q\in\{0, 1,2,\cdots\}$, and consider a collection $(c_t^+)_{t\in \mathcal E_+}$ of nonnegative integers indexed by positive tree excursions and a collection $( c_t^-)_{t\in \mathcal E_-}$ of nonnegative integers indexed by negative tree excursions such that:
		\begin{align}
			\label{coeff}
			p=\sum_{t\in \mathcal E_+}  c_t^+ = n+\sum_{t\in \mathcal E_-}  c_t^- n_t, \quad \quad q=\sum_{t\in \mathcal E_-}  c_t^-= \sum_{t\in \mathcal E_+}  c_t^+ n_t.
		\end{align}
		In particular, the collections $( c_t^+)_{t\in \mathcal E_+}$ and $( c_t^-)_{t\in \mathcal E_+}$ must have only a finite number of nonzero terms.
		We want to evaluate the probability:
		\begin{align*}
			A_{t_\rho, (c^+_t)_t, (c_t^-)_t}= \Pi_0\Big(T^{[m]}=t_\rho, \ (\mathtt c_t^{m, +})_{t\in \mathcal E_+}= (c_t^{ +})_{t\in \mathcal E_+}, \ (\mathtt c_t^{m, -})_{t\in \mathcal E_-}= (c_t^{-})_{t\in \mathcal E_-} \Big).
		\end{align*}
		Recall the notation $\Phi(t_\rho, F, (t_v)_{v\in V(F)})$ introduced in section \ref{Defdecomp}, where $F$ is a plane forest with $n$ trees and where $(t_v)_{v\in V(F)}$ is an admissible family in $M(F)$. Computing the quantity in the last display amounts to summing the quantities $\Pi_0(\Phi(t_\rho, F, (t_v)_{v\in V(F)}))$ over all plane bicoloured forests $F$ with $n$ trees and admissible families $(t_v)_{v\in V(F)}\in M(F)$ such that for every positive (resp. negative) tree excursion $t$, the number of occurrences of $t$ in $(t_v)_{v\in V(F)}$ is exactly $c_t^+$ (resp. $c_t^-$). Write $\mathfrak F$ for the set of all such pairs $(F, (t_v)_{v\in V(F)})$. We have using (\ref{FormulaPiPhi}) that for every $(F, (t_v)_v)\in \mathfrak F$,
		\begin{align*}
			\Pi_0\left(\Phi\left(t_\rho, F, (t_v)_v\right)\right)=  \Pi_0^{[m]}(t_\rho)\prod_{v\in V(F)} \Pi^{\epsilon_v}(t)= 	 \Pi_0^{[m]}(t_\rho)\prod_{t\in \mathcal E_+} \Pi^{+}(t)^{c_t^+}\prod_{t\in \mathcal E_-} \Pi^-(t)^{c_t^-},
		\end{align*} 
		where $\epsilon_v=+$ if $v$ is at even height in $F$ and $\epsilon_v=-$ otherwise. This probability is independent of the particular choice of $(F, (t_v)_v)\in \mathfrak F$ and it follows that:
		\begin{align*}
			A_{t_\rho, (c_t^+)_t, (c_t^-)_t}= \sum_{(F, (t_v)_v)\in \mathfrak F} \Pi_0\left(\Phi\left(t_\rho, F, (t_v)_v\right)\right)
			= \text{Card}(\mathfrak F) \times \Pi_0^{[m]}(t_\rho)\prod_{t\in \mathcal E_+} \Pi^{+}(t)^{c_t^+}\prod_{t\in \mathcal E_-} \Pi^-(t)^{c_t^-}.
		\end{align*} 
		We now proceed to compute the value of $\text{Card}(\mathfrak F)$. To do so, we first distinguish between copies of the same excursion; fix a sequence $(t_1^+, \cdots, t_p^+)$ of positive tree excursions in which every $t\in \mathcal E_+$ appears exactly $c_t^+$ times. Similarly fix a sequence $(t_1^-, \cdots, t_q^-)$ of negative tree excursions in which each $t\in \mathcal E_-$ appears exactly $c_t^-$ times.
		
		Recall the definition of the set $\mathcal S(n , (n_{t_j^+})_{j\leq q}, (n_{t_i^-})_{i\leq p})$ in Lemma \ref{lemmaCounting}. Note that by construction, this set exactly corresponds to the set of bicoloured plane forests $F$ of $n$ trees on the vertex set $\{u_1, \cdots , u_q, v_1, \cdots, v_p\}$ such that setting $t_{u_i}= t^+_i$ and $t_{v_j}= t^-_j$ for every $i\leq q$ and $j\leq p$ yields an admissible family $(t_w)_{w\in \{u_i\}\cup \{v_j\}}\in M(F)$, and we have $(F, (t_w)_{{w\in \{u_i\}\cup \{v_j\}}})\in \mathfrak F$. This gives a mapping
		 \begin{align*}
		 	\mathcal S(n , (n_{t_j^+})_{j\leq q}, (n_{t_i^-})_{i\leq p}) \to \mathfrak F,
		 \end{align*}
		  that is onto but not injective since we need to account for the fact that permuting vertices $u_i$ and $u_j$ such that $t_i^+= t_j^+$ would give distinct elements of $\mathcal S(n , (n_{t_j^+})_{j\leq q}, (n_{t_i^-})_{i\leq p})$ that project to the same element in $\mathfrak F$. There are exactly $\prod_{t\in \mathcal E_-} c_t^-!$ such permutations of $\{u_1, u_2, \cdots, u_p\}$, and similarly, we can permute the $v_j$'s in $\prod_{t\in \mathcal E_+} c_t^+!$ ways and still get the same element of $\mathfrak F$. All in all, it follows that we have:
		\begin{align*}
			\text{Card}(\mathfrak F)= \frac{\text{Card} \big( \mathcal S(n , (n_{t_j^+})_{j\leq q}, (n_{t_i^-})_{i\leq p})\big)}{\prod_{t\in \mathcal E_+} c_t^+! \prod_{t\in \mathcal E_-} c_t^-!}= \frac{n \cdot (p-1)! \cdot q!}{\prod_{t\in \mathcal E_+} c_t^+! \prod_{t\in \mathcal E_-} c_t^-!}
		\end{align*}
		So that finally:
		\begin{align}
			\label{ExpressionA}
			A_{t_\rho, (c_t^+)_t, (c_t^-)_t}=  n \times q!(p-1)! \times  \Pi_0^{[m]}(t_\rho)\times\prod_{t\in \mathcal E_-} \frac{\Pi^{-}(t)^{c_t^-}}{c_t^-!} \times \prod_{t\in \mathcal E_+} \frac{\Pi^+(t)^{c_t^+}}{c_t^+!}.
		\end{align}
		Note that $n$ is determined from $t_\rho$, and $p$ and $q$ are determined from $n$ and $(c_t^-)_{t\in \mathcal E_-}$ using (\ref{coeff}). In particular, summing over all choices of the collection $(c_t^+)_{t\in \mathcal E_+}$ we get:
		\begin{align}
			\label{ExpressionA2}
			\Pi_0\left(\tau_\rho^m=t_\rho, \ (\mathtt c_t^{m, -})_{t\in \mathcal E_-}= (c_t^{-})_{t\in \mathcal E_-}\right)=  n\times  q!(p-1)! \times  \Pi_0^{[m]}(t_\rho)\prod_{t\in \mathcal E_-} \frac{\Pi^{-}(t)^{c_t^-}}{c_t^-!} \times Z_{p, q},
		\end{align}
		where $Z_{p, q}$ is the normalizing constant:
		\begin{align}
			\label{ExpressionZpq}
			Z_{p, q}= \sum_{(d_t^+)_{t\in \mathcal E_+}} \prod_{t\in \mathcal E_+} \frac{\Pi^+(t)^{d_t^+}}{d_t^+!},
		\end{align}
		the sum being over all families $(d_t^+)_{t\in \mathcal E_+}$ satisfying $q=\sum_{t} d_t^+n_t$ and $p= \sum_{t} d_t^+$. Using (\ref{ExpressionA}) and (\ref{ExpressionA2}) this finally implies that:
		\begin{align}
			\label{conditionalprobability}
			\mathbb P\Big((\mathtt c_t^{m, +})_{t\in \mathcal E_+}=(c_t^+)_{t\in \mathcal E_+} \ \Big| \ T^{[m]}=t_\rho, \ (\mathtt c_t^{m, -})_{t\in \mathcal E_-}= (c_t^-)_{t\in \mathcal E_-} \Big)= \frac{1}{Z_{p, q}}\prod_{t\in \mathcal E_+} \frac{\Pi^+(t)^{c_t^+}}{c_t^+!}.
		\end{align}
		Note that on the event $\{T^{[m]}=t_\rho, (\mathtt c_t^{m, -})_{t\in \mathcal E_-}= (c_t^-)_{t\in \mathcal E_-}\}$, we have $X_m^+=p$ and $X_m^-=q$. 
		Since the conditional probability (\ref{conditionalprobability}) only depends on $p$, $q$ and $(c_t^+)_{t\in \mathcal E_+}$, this proves that $(X_m^+, X_m^-)_{m\geq 1}$ is a time-homogeneous Markov chain.\\
		
		We now proceed to show \eqref{LawExcu}. Let $\tau_1, \cdots, \tau_p$ be independent tree excursions with law $\Pi^+$. Fix a collection of nonnegative integers $(d_t)_{t\in \mathcal E_+}$ such that $\sum_{t\in \mathcal E_+} d_t=p$. Write to simplify notation $[p]=\{1, \cdots, p\}$. We have:
		\begin{align*}
			\mathbb P\left(\forall t\in \mathcal E_+, C_t(\tau_1, \cdots, \tau_p)= d_t\right)&= \sum_{\substack{A_t\subset [p], t\in \mathcal E_+\\
					(A_t)_t \text{ disjoints }; \sqcup_t A_t=[p] \\
					\text{Card} A_t= d_t  }} \mathbb P\left(\forall t\in \mathcal E_t, \forall i\in A_t, \tau_i=t\right)\\
			&=\sum_{\substack{A_t\subset [p], t\in \mathcal E_+\\
					(A_t)_t \text{ disjoints }; \sqcup_t A_t=[p] \\
					\text{Card} A_t= d_t  }} \prod_{t\in \mathcal E_+} \Pi^+(t)^{d_t}.
		\end{align*}
		It follows that:
		\begin{align}
			\label{ExprIndeExcu}
			\mathbb P\left(\forall t\in \mathcal E_+, C_t(\tau_1, \cdots, \tau_p)= d_t\right)=\frac{p!}{\prod_{t\in \mathcal E_+}d_t!}\prod_{t\in \mathcal E_+}\Pi^+(t)^{d_t}.
		\end{align}
		In particular, recall the definition (\ref{ExpressionZpq}) of the coefficient $Z_{p, q}$. According to the last computation:
		\begin{align*}
			Z_{p, q}&= \sum_{\substack{(d_t)_{t\in \mathcal E_+}\\ \sum_t d_t=p, \ \ \sum_t d_t n_t= q}} \frac{1}{p!}	\mathbb P\left(\forall t\in \mathcal E_+, C_t(\tau_1, \cdots, \tau_p)= d_t\right)\\
			&= \frac{1}{p!}\mathbb P\left(\sum_{t\in \mathcal E_+}n_t C_t(\tau_1, \cdots, \tau_p)=q\right).
		\end{align*}
		Hence we can identify:
		\begin{align}
			\label{ExprZPQ}
			Z_{p, q}=\frac{1}{p!}\mathbb P\left(n_{\tau_1}+\cdots+ n_{\tau_p}=q\right).
		\end{align}
		Write $\mathbb P_{p,q}$ for the law of $(\mathtt c_t^{m, +})_{t\in \mathcal E_+}$, conditionally on $X_m^+=p$ and $X_m^-=q$. It follows from (\ref{conditionalprobability}) and formulas (\ref{ExprIndeExcu}) and (\ref{ExprZPQ}) that for all choices of nonnegative integers $(c_t)_{t\in \mathcal E_+}$ such that $\sum_t c_t=p$ and $\sum_t c_t n_t=q$:
		\begin{align*}
			\mathbb P_{p, q}((c_t)_{t\in \mathcal E_+})&=\frac{1}{Z_{p, q}}\prod_{t\in \mathcal E_+} \frac{\Pi^+(t)^{c_t}}{c_t!}\\
			&= \frac{\mathbb P(\forall t\in \mathcal E_+, C_t(\tau_1, \cdots, \tau_p)= c_t)}{\mathbb P\left(n_{\tau_1}+\cdots+ n_{\tau_p}=q\right)}\\
			&=\mathbb P\left(\forall t\in \mathcal E_+, C_t(\tau_1, \cdots, \tau_p)= c_t \ \mid \ n_{\tau_1}+\cdots+ n_{\tau_p}=q \right),
		\end{align*}
		which completes the proof of Theorem \ref{discreteMarkovProperty}.
	\end{proof}
		Recall that we defined $\check X_{k}^+$ and $\check X_{k}^-$ at the end of section $2.2$: $\check X_{k}^+$ is the number of oriented edges from a vertex labelled $-k$ to one labelled $-k+1$ and  $\check X_{k}^- $ is the number of oriented edges from a vertex labelled $-k+1$ to one labelled $-k$. By symmetry, the process $(\check X_{k}^+, \check X_{k}^-)_{k\geq 1}$ is also a Markov process and we note that this process is a function of $T^{[1]}$ and the collection $(\mathtt c_t^{1, -})_{t\in \mathcal E_-}$. In particular, it follows from the previous theorem that conditionally on the values taken by $(X_{1}^+, X_{1}^-)$, the two Markov processes $(X_{k}^+, X_{k}^-)_{k \geq 1}$ and $(\check X_k^+, \check X_k^-)_{k\geq 1})$ are independent. Moreover, note that if the displacement laws $(\eta^{(k)})_{k\geq 0}$ are symmetric, then $(X_k^+, X_k^-)$ and $(\check X_k^-, \check X_k^+)$ have the same distribution by symmetry. In this case $(\check X_{k}^-, \check X_{k}^+)_{k\geq 1}$ and $( X_{k}^+,  X_{k}^-)_{k\geq 1}$ have the same law (beware that the signs $+$ and $-$ are interchanged between the two processes). This is reminiscent of Theorem $1$ in \cite{SDELocalTimesI} and we record this property as a corollary.
	\begin{corollaire}
		\label{CoroMarkov}
		The two 2D-processes $(\check X_{k}^-, \check X_{k}^+)_{k\geq 1}$ and $( X_{k}^+,  X_{k}^-)_{k\geq 1}$ are time homogeneous Markov chains, which have the same law if the displacement laws $(\eta^{(k)})_{k\geq 0}$ are symmetric. Moreover, conditionally on the value taken by $(X_{1}^+, X_{1}^-)$, these two processes are independent.
	\end{corollaire}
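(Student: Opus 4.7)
The plan is to derive all three assertions from Theorem~\ref{discreteMarkovProperty}, combined with a label-reflection argument and a careful bookkeeping of which edges live in which piece of the excursion decomposition at level $m=1$.

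First I would obtain the Markov property of $(\check X_k^+, \check X_k^-)_{k\geq 1}$ by applying Theorem~\ref{discreteMarkovProperty} to the mirror tree $\tilde T$ obtained from $T$ by negating every label. The tree $\tilde T$ is again a labelled Galton--Watson tree in the sense of Definition~\ref{Pi0}, with the same offspring distribution $\xi$ and reflected displacement laws $\tilde\eta^{(d)}(i_1,\dots,i_d)=\eta^{(d)}(-i_1,\dots,-i_d)$. An oriented edge of $T$ from label $-k$ to $-k+1$ corresponds under the reflection to an oriented edge of $\tilde T$ from $k$ to $k-1$, so the process $(\check X_k^+, \check X_k^-)_{k\geq 1}$ built on $T$ agrees pathwise with $(X_k^-, X_k^+)_{k\geq 1}$ built on $\tilde T$, and the Markov property for the latter (given by Theorem~\ref{discreteMarkovProperty}) transfers. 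When every $\eta^{(d)}$ is symmetric, one has $\tilde\eta^{(d)}=\eta^{(d)}$ and thus $\tilde T\stackrel{(d)}{=}T$, yielding the distributional identity $(\check X_k^-, \check X_k^+)_{k\geq 1} \stackrel{(d)}{=} (X_k^+, X_k^-)_{k\geq 1}$.

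For the conditional independence, I would split the information in $T$ along the $0$--$1$ interface used in the excursion decomposition at level $m=1$. The key measurability claims are: (i) $(\check X_k^+, \check X_k^-)_{k\geq 1}$ is a function of $\bigl(T^{[1]}, (\mathtt c_t^{1,-})_{t\in \mathcal E_-}\bigr)$; and (ii) $(X_k^+, X_k^-)_{k\geq 2}$ is a function of $(\mathtt c_t^{1,+})_{t\in \mathcal E_+}$. For (i), any edge counted by $\check X_k^\pm$ has both endpoints of label $\leq 0$; since the cutting rule only disconnects edges across the $\{0,1\}$ interface, no such edge can be cut, so it lies entirely either in $T^{[1]}$ or in one of the negative tree excursions at level $1$. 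For (ii), any edge counted by $X_k^\pm$ for $k\geq 2$ has both endpoints of label $\geq 1$; since labels vary by at most $1$ along edges and the root is labelled $0$, each such endpoint has a strict ancestor labelled $1$ and hence is not in $T^{[1]}$, so the edge lies inside one of the positive tree excursions at level $1$.

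Combining (i) and (ii) with Theorem~\ref{discreteMarkovProperty} at $m=1$, which states that conditionally on $(X_1^+, X_1^-)$ the collection $(\mathtt c_t^{1,+})_{t\in \mathcal E_+}$ is independent of $\bigl(T^{[1]}, (\mathtt c_t^{1,-})_{t\in \mathcal E_-}\bigr)$, would give the conditional independence of $(X_k^+, X_k^-)_{k\geq 2}$ and $(\check X_k^-, \check X_k^+)_{k\geq 1}$ given $(X_1^+, X_1^-)$. Since we condition on the value of $(X_1^+, X_1^-)$, this upgrades immediately to conditional independence of the full processes $(X_k^+, X_k^-)_{k\geq 1}$ and $(\check X_k^-, \check X_k^+)_{k\geq 1}$. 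The only subtle point I expect is in verifying claim (i)---i.e.\ confirming that a vertex of label $\leq 0$ and its parent cannot end up on opposite sides of a cut at level $1$---which is a direct consequence of the cutting convention.
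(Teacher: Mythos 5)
Your proposal is correct and takes essentially the same route as the paper: the label-reflection symmetry gives the Markov property and the equality in law, and the conditional independence follows from Theorem~\ref{discreteMarkovProperty} at $m=1$ once one observes that $(\check X_k^+,\check X_k^-)_{k\geq 1}$ is measurable with respect to $(T^{[1]},(\mathtt c_t^{1,-})_{t\in\mathcal E_-})$ while $(X_k^+,X_k^-)_{k\geq 1}$ is measurable with respect to $(\mathtt c_t^{1,+})_{t\in\mathcal E_+}$. Your write-up simply makes explicit the edge-location bookkeeping that the paper leaves to the reader.
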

	\subsection{Trees conditioned on the number of vertices}
	
	Another important model of random trees comes from considering Galton--Watson trees conditioned on their size. More precisely, fix an integer $V\geq 1$, such that the quantity
	\begin{align*}
		\mathcal Z_V:=\Pi_{0}^{}\left(|T|=V\right)= \sum_{t: |t|= V} \Pi_0(t),
	\end{align*}
	is positive, where $|T|$ is the total number of edges in $T$. We write $\Pi^{(V)}_0$ for the law of tree $T$ conditioned on $\{|T|=V\}$. 
	The process $(X_m^+, X_m^-)_{m\geq 1}$ is no longer a homogeneous Markov chain under $\Pi_0^{(V)}$ but we explain in this section how one can nonetheless recover a Markov chain by adding the information of the total size of the tree below (or equivalently above) level $m$.
More precisely,	write $M_m^-$ for the number of edges with both endpoints having label at most $m-1$:
	\begin{align*}
		M_m^-= \text{Card}\Big\{v\in V(T)\setminus \{\rho_T\} \ : \ \ell_T(v)\leq m-1 \text{ and } \ell_T(\pi_T(v))\leq m-1\Big\},
	\end{align*}
	and note that equivalently we can easily check that:
	\begin{align}
		\label{ExprMm}
			M_m^-= |T^{[m]}|+\sum_{t\in \mathcal E_-}\mathtt c_t^{m, -}|t|-X_m^+.
	\end{align}
	To simplify notation, we write in the following, for every $m\geq 1$:
	\begin{align*}
		\mathcal X_m= (X_m^+, X_m^-), \quad \text{ and } \quad \bar {\mathcal X}_m= (X_m^+, X_m^-, M_m^-).
	\end{align*}
	We are going to show that $(\bar {\mathcal X}_m)_m$ is a Markov chain under $\Pi^{(V)}_0$. We start by observing that it is a Markov chain under the unconditioned law $\Pi_0$.
\begin{claim}\label{claim}	The 3D process $(\bar {\mathcal X}_m)_{m\geq 0}$ is a time-homogeneous Markov chain under $\Pi_0$.
\end{claim}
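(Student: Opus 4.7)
The plan is to reduce the claim to the two-dimensional Markov property of Theorem~\ref{discreteMarkovProperty}. Concretely, I will show that $\bar{\mathcal X}_{m+1}$ is a deterministic function of $\bar{\mathcal X}_m$ and the multiset $(\mathtt c_t^{m,+})_{t\in \mathcal E_+}$ of positive excursions above level $m$, while the whole past $(\bar{\mathcal X}_k)_{k\leq m}$ is measurable with respect to $\sigma(T^{[m]}, (\mathtt c_t^{m,-})_{t\in \mathcal E_-})$. The Markov property will then follow from the conditional independence statement of Theorem~\ref{discreteMarkovProperty}.

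\emph{Update formula.} The quantities $X_{m+1}^+$ and $X_{m+1}^-$ are already measurable functions of $(\mathtt c_t^{m,+})_t$, as noted in the excerpt preceding Theorem~\ref{discreteMarkovProperty}. For $M_{m+1}^-$, I would decompose $T$ at level $m$ and count the edges with both endpoints labelled $\leq m$ in each piece: every edge of $T^{[m]}$ and of every negative excursion contributes, whereas within a positive excursion $t$ only the edges with both endpoints labelled in $\{m-1,m\}$ qualify. The latter split into the $n_t$ edges attached to a label-$0$ leaf (in excursion coordinates, each necessarily having a label-$1$ parent) together with the $f_t := \#\{e\in E(t) : \text{both endpoints of } e \text{ labelled } 1\}$ horizontal edges at level $1$. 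Combining this with identity (\ref{ExprMm}) and the relation $X_m^- = \sum_{t\in \mathcal E_+}\mathtt c_t^{m,+} n_t$ yields
\[
M_{m+1}^- = M_m^- + X_m^+ + X_m^- + \sum_{t\in \mathcal E_+}\mathtt c_t^{m,+}\, f_t,
\]
so that $\bar{\mathcal X}_{m+1} = \Psi\bigl(\bar{\mathcal X}_m, (\mathtt c_t^{m,+})_t\bigr)$ for an explicit time-independent map $\Psi$.

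\emph{Measurability of the past.} Since a positive excursion at level $m$ only contains vertices labelled $\geq m-1$, every edge of $T$ with an endpoint labelled $\leq m-1$ lies either in $T^{[m]}$ or in some negative excursion at level $m$. In particular every coordinate of $\bar{\mathcal X}_k$ for $k\leq m-1$ is a measurable function of $(T^{[m]}, (\mathtt c_t^{m,-})_t)$. For $k=m$, the identities $X_m^- = \sum_t \mathtt c_t^{m,-}$, $X_m^+ = N_m + \sum_t \mathtt c_t^{m,-} n_t$ (with $N_m$ the number of leaves of $T^{[m]}$ labelled $m$) and (\ref{ExprMm}) give the same conclusion.

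\emph{Conclusion.} Theorem~\ref{discreteMarkovProperty} guarantees that conditionally on $(X_m^+, X_m^-)$, the collection $(\mathtt c_t^{m,+})_t$ is independent of $(T^{[m]}, (\mathtt c_t^{m,-})_t)$ with a conditional law depending only on $(X_m^+, X_m^-)$ and not on $m$. Combined with the previous step, this implies that $(\mathtt c_t^{m,+})_t$ is conditionally independent of $(\bar{\mathcal X}_k)_{k\leq m}$ given $\bar{\mathcal X}_m$. Applying the deterministic map $\Psi$, the law of $\bar{\mathcal X}_{m+1}$ given the past depends only on $\bar{\mathcal X}_m$ and is governed by a time-homogeneous kernel, as claimed. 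The main piece of bookkeeping lies in the update formula; once it is in place, the rest is a direct invocation of Theorem~\ref{discreteMarkovProperty}.
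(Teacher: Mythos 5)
Your proof is correct and takes essentially the same route as the paper's: both reduce the claim to the conditional independence statement of Theorem~\ref{discreteMarkovProperty}, using that the past $(\bar{\mathcal X}_k)_{k\leq m}$ is measurable with respect to $(T^{[m]}, (\mathtt c_t^{m,-})_{t\in\mathcal E_-})$ while $\bar{\mathcal X}_{m+1}$ is a deterministic, time-independent function of $\bar{\mathcal X}_m$ and $(\mathtt c_t^{m,+})_{t\in\mathcal E_+}$. The only difference is cosmetic: you write out the update formula for $M_{m+1}^-$ explicitly (correctly, including the horizontal-edge term $\sum_t \mathtt c_t^{m,+} f_t$), where the paper contents itself with the measurability remark based on \eqref{ExprMm}.
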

\begin{proof}[Proof of the claim:] Fix $m\geq 1$ and $p\geq 1$, $q\geq0$, we know by Theorem \ref{discreteMarkovProperty} that conditionally on $\{X_m^+=p, X_m^-=q\}$, the collection $(\mathtt c_t^{m, +})_{t\in \mathcal E_+}$ is independent of $(\mathtt c_t^{m, -})_{t\in \mathcal E_-}$ and of $T^{[m]}$, and its law $\mathbb P_{p, q}$ is given in Theorem \ref{discreteMarkovProperty}.

 Note that, using \eqref{ExprMm}, $M_m^-$ is a measurable function of $(\mathtt c_t^{m, -})_{t\in \mathcal E_-}$ and $T^{[m]}$. In particular, conditionally on $\{X_m^+=p, X_m^-=q\}$, the collection $(c_t^{m, +})_{t\in \mathcal E_+}$ is independent of $M_m^-$. It follows that if we now condition on $\{X_m^+= p, \ X_m^-=q, \ M_m^-=v\}$ for some $v\geq 0$, the collection $(\mathtt c_t^{m, +})_{t\in \mathcal E_+}$ again has law $\mathbb P_{p, q}$ and is independent of $(\mathtt c_t^{m, -})_{t\in \mathcal E_-}$ and of $T^{[m]}$, and in particular of the family $(\bar{\mathcal X}_k)_{k<m}$.	Since $X_{m+1}^+$ and $X_{m+1}^-$ are measurable functions of $(\mathtt c_t^{m, +})_{t\in \mathcal E_+}$ and $M_{m+1}^-$ is a measurable function of $M_m^-$ and of $(\mathtt c_t^{m, +})_{t\in \mathcal E_+}$, this ensures that conditionally on $\{\bar{\mathcal X}_m= (p,q,v)\}$, the variable $\bar {\mathcal X}_{m+1}$ is independent of $(\bar {\mathcal X}_k)_{k<m}$ with its law depending on $(p, q, v)$ but not on $m$. This shows that $(\bar {\mathcal X}_m)_{m\geq 0}$ is a time-homogeneous Markov chain under $\Pi_0$, as claimed.
\end{proof}
	Note that $\{|T|=V\}$ is the event that $\bar {\mathcal X}$ hits the absorbing state $(0, 0, V)$:
	 \begin{align*}
	 	\{|T|=V\}= \{\bar {\mathcal X}_m= (0, 0, V) \text{ for all $m$ large enough}\}.
	 \end{align*}
	 By classical arguments (Doob's $h$-transform of Markov chains), the process $(\bar {\mathcal X_m})_m$ conditioned on the event $\{|T|=V\}$ is again a Markov chain, with modified stochastic kernel:
	 \begin{align}
	 	\label{DHTRANS}
	 	\Pi_0^{(V)}\left(\bar{\mathcal X}_{m+1}=(r, s, w) |  \bar {\mathcal X_m}=(p, q, v)\right)= \frac{H(r, s, w)}{H(p, q, v)} \Pi_0\left(\bar {\mathcal X}_{m+1}= (r, s, w) | \bar {\mathcal X}_m=(p, q, v)\right),
	 \end{align}
	 where $H(p, q, v)= \Pi_0\left(|T|=V \mid \bar{\mathcal X}_1=(p, q, v)\right)$. Equivalently, noting that we have:
	 \begin{align*}
	 	|T|= M_1^-+ X_1^++ \sum_{k=1}^{X_1^+}|\tau_k^{1, +}|,
	 \end{align*}
	 where we recall that $\tau_j^{ 1,+}, j\leq X_1^+$ are the tree excursions above level $1$, it follows that:
	 \begin{align*}
	 	H(p, q, v)&= \Pi_0\left(v+p+\sum_{k=1}^{p}|\tau_k^{1, +}|=V \ \Big| \ \bar{\mathcal X}_1=(p, q, v)\right).
	 \end{align*}
	 By combining Theorem \ref{discreteMarkovProperty} and the argument of the proof of Claim \ref{claim}, we know that the conditional distribution under $\Pi_0$ of the unordered collection $\{\tau_j^{1, +}, j\leq X_1^m\}$ knowing that $\bar {\mathcal X}_1=(p, q, v)$, is the law of $p$ independent positive tree excursions distributed according to $\Pi_+$ and conditioned to have exactly $q$ leaves labelled $0$. It follows that:
	 \begin{align}
	 	\label{HarmoFunc}
	 	H(p, q, v)&=\mathbb P\left(\sum_{k=1}^p|\tau_k|= V-v-p \ \Big| \ \sum_{k=1}^pn_{\tau_k}=q\right),
	 \end{align}
	 where the $\tau_k$ are independent tree excursions sampled according to $\Pi_+$.
	\begin{proposition}
		\label{discreteMarkoCondiProp}
		Fix $V\geq 1$, under $\Pi_0^{(V)}$, the 3D process $(X_m^+, X_m^-, M_m^-)_m$, 
	is a Markov chain. Its transition kernel is a $h$-transform of the transition kernel under $\Pi_0$ in the sense of \eqref{DHTRANS}, with the harmonic function $H$ given in \eqref{HarmoFunc}.
	\end{proposition}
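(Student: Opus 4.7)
The proof combines the Markov property already established in Claim~\ref{claim} with a standard Doob $h$-transform argument, most of which is already sketched in the discussion preceding the statement.

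First, I would observe that the event $\{|T|=V\}$ is precisely the event that the chain $(\bar{\mathcal X}_m)_{m \geq 1}$ is eventually absorbed at the single state $(0,0,V)$: once $m$ exceeds the maximal label reached in $T$, one has $X_m^+ = X_m^- = 0$ and $M_m^-$ stabilizes at $|T|$, so among the absorbing states $\{(0,0,v) : v \geq 0\}$ only $(0,0,V)$ is compatible with $\{|T|=V\}$. Since by assumption $\mathcal Z_V > 0$, this event has positive probability.

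Second, I would invoke the standard Doob $h$-transform principle: for a time-homogeneous Markov chain conditioned on being eventually absorbed at a prescribed state, the conditional law is again a time-homogeneous Markov chain whose transition kernel is rescaled as in \eqref{DHTRANS}, with harmonic function $H(p,q,v)$ equal to the probability, starting from $(p,q,v)$, of reaching the prescribed absorbing state. By the time-homogeneity of $(\bar{\mathcal X}_m)_m$ under $\Pi_0$ (Claim~\ref{claim}) together with the identification of $\{|T|=V\}$ above, this harmonic function can be represented as $H(p,q,v) = \Pi_0(|T|=V \mid \bar{\mathcal X}_1 = (p,q,v))$, which is the expression used in the statement.

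Third, to derive the explicit formula \eqref{HarmoFunc}, I would use the decomposition
\[
|T| = M_1^- + X_1^+ + \sum_{k=1}^{X_1^+} |\tau_k^{1,+}|
\]
at level $1$. Conditionally on $\bar{\mathcal X}_1 = (p,q,v)$, the event $\{|T|=V\}$ is equivalent to $\{\sum_{k=1}^p |\tau_k^{1,+}| = V - v - p\}$. The key input is Theorem~\ref{discreteMarkovProperty} combined with the independence observation from the proof of Claim~\ref{claim}: conditionally on $\bar{\mathcal X}_1 = (p,q,v)$, the unordered collection $\{\tau_k^{1,+}\}_{k \leq p}$ is distributed as $p$ i.i.d.\ positive tree excursions with law $\Pi^+$ conditioned on their total number of leaves labelled $0$ being $q$. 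Substituting this in immediately yields the formula \eqref{HarmoFunc} for $H$.

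The only slightly delicate point is the usual well-posedness of the $h$-transform on a countable state space absorbed at a single state: one must check that $H$ is strictly positive at every state $(p,q,v)$ reachable from the initial law with $\{|T|=V\}$ having positive conditional probability, which is automatic from the definition of $H$ and the positivity of $\mathcal Z_V$. All remaining steps are routine bookkeeping using the formulas already available in the preceding sections.
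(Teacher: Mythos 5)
Your proposal is correct and follows essentially the same route as the paper: establish the time-homogeneous Markov property under $\Pi_0$ via Claim~\ref{claim}, identify $\{|T|=V\}$ as absorption of $(\bar{\mathcal X}_m)_m$ at $(0,0,V)$, apply the standard Doob $h$-transform, and compute $H$ from the decomposition $|T| = M_1^- + X_1^+ + \sum_{k\leq X_1^+}|\tau_k^{1,+}|$ together with the conditional law of the excursions from Theorem~\ref{discreteMarkovProperty}. No gaps.
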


	\begin{remark}
		\begin{enumerate}
			\item Consider symmetrically, for every $m\geq 1$ the number $\check M_m^+$ of edges with both endpoints having label greater than or equal to $-m+1$. By symmetry, $(\check X^+_m, \check X_m^-, \check M_m^+)_{m\geq 1}$ is also a Markov chain under $\Pi^{(V)}$. This independence property is analogous to the one stated in Corollary \ref{CoroMarkov} and the proof is left to the reader. Then, conditionally on the value taken by $(X^+_1, X^-_1, M_1^-)$, the 3D processes $( X^+_{k}, X^-_{k}, M_{k}^-)_{k\geq 1}$ and $(\check X^+_k, \check X^-_k, \check M_k^+)_{k\geq 1}$ are independent time-homogeneous Markov chains under $\Pi^{(V)}$.
			\item 
			Fix $m\geq 1$ and $p, q, v\geq 0$ such that $
			\Pi_0^{(V)}((X_m^+, X_m^-, M_m^-)=(p, q, v))>0$. Under $\Pi^{(V)}$ and conditionally on $\{(X_m^+, X_m^-, M_m^-)=(p, q, v)\}$, it follows from the previous discussion that the law of the unordered collection of the tree excursions above level $m$ are distributed as the collection of $p$ independent tree excursions $\tau_1, \cdots, \tau_p$ sampled according to $\Pi_+$ and conditioned on :
			\begin{align*}
				\{n_{\tau_1}+\cdots+n_{\tau_p}=q, \ \ |\tau_1|+\cdots+|\tau_p|= V-v-p\}.
			\end{align*}
		\end{enumerate}
\end{remark}
	\section{Explicit kernels for the binary tree model}
	
	We now consider the specific model of random labelled tree where each vertex has a left and right child independently with probability $1/2$. This correspond to the case $3$ of Proposition \ref{ExplicitForm}, where the underlying Galton--Watson tree has critical offspring distribution:
	\begin{align*}
		\xi= \frac{1}{4} \delta_0+\frac{1}{2} \delta_1 + \frac{1}{4} \delta_2,
	\end{align*}
	and the displacement distributions are given by $\eta^{(1)}= \frac{1}{2}(\delta_{-1}+\delta_{1})$ and $\eta^{(2)}= \delta_{(-1, 1)}$. The measure $\Pi_0$ is a probability measure on the set $\mathbb B$ of all \emph{incomplete binary trees}, where an incomplete binary tree is a rooted plane tree in which each vertex has a (possibly empty) left subtree and a (possibly empty) right subtree. Binary trees are labelled in such a way that the root has label $0$ and a left child has label one less than his parent and a right child has label one more than his parent.  In this model, each vertex has a left child with probability $1/2$ and independently a right child with probability $1/2$. If $T\in \mathbb B$, we have:
	\begin{align*}
		\Pi_0(T)= 4^{-|T|-1},
	\end{align*}
	where we recall that $|T|$ is the number of edges in $T$. We want to find a formula for the transition kernel of the Markov chain $(X^+_m, X^-_m)_{m\geq 1}$ under $\Pi_0$. Note that by construction if $X_m^+=0$ then we must have $X_m^-=0$.
	In the following, we will consider $(\mathcal X_m)_m$ as a process taking values in the state space:
	\begin{align}
	\label{statespace}
	\mathfrak S= (\mathbb N_{>0}\times \mathbb N_{\geq 0})\cup \{(0, 0)\}.
	\end{align}
	For every states $(p, q)\in \mathfrak S$ and $(r, s)\in \mathfrak S$, let
	\begin{align*}
	\mathbf P_{p, q}^{r,s}= \Pi_0\left((X_{m+1}^+, X_{m+1}^-)= (r, s) \ \mid \ (X_m^+, X_m^-)=(p, q) \right).
	\end{align*}
	 We now state the main result of this section, finding an explicit formula for the kernel $\mathbf P$. This formula will involve the probability distributions of a random walk with i.i.d. steps of law $\nu_+$, where we recall that $\nu_+$ was defined in \eqref{Numoins}. Note that since the displacement laws are symmetric we have $\nu_+=\nu_-$ and we simply write in the following $\nu=\nu_+$. Recall that for this specific model, the generating function of $\nu$ is explicitly given according to Proposition \ref{ExplicitForm} by:
	 \begin{align*}
	 	g_\nu(z)=\frac{- z^2+ 16 z - 3 + \sqrt{49-z} (1 - z)^{3/2}}{2(z+5)}.
	 \end{align*}
	 Let $(N_j)_j$ be i.i.d random variables with law $\nu$ and consider the random walk $(S_k)_k$ defined by $S_0=0$ and $S_k= \sum_{j=1}^kN_j$ for every $k\geq 1$, and write $f_k$ for the probability distribution of $S_k$. In other words, for every $l\geq 0$, write:
	\begin{align*}
			f_k(l)= \mathbb P(S_k= l).
	\end{align*}
	\begin{theorem}[Kernel for the free binary tree model]
		\label{ThmExplicitKernel}
		For every $(p, q), (r, s)\in \mathfrak S$, we have:
		\begin{align}
			\label{FormThmExplicitKernel}
			\mathbf P_{p, q}^{r, s}= \frac{p4^{-p-s}}{p+s}
			{p+s\choose r}{p+s\choose q}\frac{f_r(s)}{f_p(q)},
		\end{align}
		with the convention $\mathbf P_{0, 0}^{0, 0}=1$.
	\end{theorem}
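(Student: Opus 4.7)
The plan is to compute the transition probability by building the joint probability generating function of a single $\Pi^+$-excursion, checking that it satisfies an equation of the form $w = u\,\phi(w)$, and then applying Lagrange inversion. The starting point is Theorem~\ref{discreteMarkovProperty}: conditionally on $\{X_m^+ = p,\ X_m^- = q\}$, the positive tree excursions above level $m$ are distributed as $p$ i.i.d.\ excursions $\tau_1, \dots, \tau_p$ sampled from $\Pi^+$ and conditioned on $\sum_i n_{\tau_i} = q$. The variables $X_{m+1}^\pm$ decompose additively as $X_{m+1}^\pm = \sum_i X^\pm(\tau_i)$, where $X^\pm(\tau)$ count the corresponding edges within each $\tau$ at its own level~$2$. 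This reduces the computation to the joint law of $\bigl(\sum_i X^+(\tau_i),\ \sum_i X^-(\tau_i),\ \sum_i n_{\tau_i}\bigr)$ under i.i.d.\ $\tau_i \sim \Pi^+$, after which one divides by $f_p(q)$.

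For a single $\tau \sim \Pi^+$, I will set $K(\tau) := X^-(\tau) + 1$, the number of label-$1$ vertices of $\tau$, and consider the joint PGF
\[
\tilde H(u,b,d) := \mathbb E\bigl[u^{K(\tau)}\, b^{X^+(\tau)}\, d^{n(\tau)}\bigr].
\]
Decomposing $\tau$ at its root -- which in the binary model independently carries a left child with probability $1/2$ producing an L$0$ leaf, and a right child with probability $1/2$ giving an L$2$ subtree of law $\Pi_2^{[0]}$ -- will yield the relation
\[
\tilde H(u,b,d) \;=\; \tfrac{u(1+d)}{4}\bigl(1 + b\,\tilde H_2(u,b,d)\bigr),
\]
where $\tilde H_2$ is the analogous PGF for $\sigma \sim \Pi_2^{[0]}$. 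The key self-similar identity I need is $\tilde H_2(u,b,d) = g_\nu\bigl(\tilde H(u,b,d)\bigr)$. To prove it, I will apply the excursion decomposition of $\sigma$ at its own level~$1$: the truncation $\sigma^{[1]}$ shifted by $-1$ has law $\Pi^+$ by Markov and shift invariance, so the number $N_1(\sigma)$ of subtrees of $\sigma$ branching off its first-hit vertices at label $1$ has law $\nu$; conditionally, these subtrees are i.i.d.\ $\Pi^+$-excursions $\tau^*_j$. A short check, relying on the fact that $\sigma^{[1]}$ has all labels $\geq 1$ and hence contributes nothing to $K$, $X^+$ or $n$, will show that $K(\sigma) = \sum_j K(\tau^*_j)$, $X^+(\sigma) = \sum_j X^+(\tau^*_j)$ and $n(\sigma) = \sum_j n(\tau^*_j)$, from which $\tilde H_2 = \mathbb E\bigl[\tilde H^{N_1(\sigma)}\bigr] = g_\nu(\tilde H)$.

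Combining these two identities, $w := \tilde H(u,b,d)$ satisfies $w = u\,\phi(w)$ with $\phi(w) = \tfrac{1+d}{4}\bigl(1 + b\,g_\nu(w)\bigr)$. Lagrange inversion then gives, for $k = p$ and $n = p+s$,
\[
[u^{p+s}]\, w^p \;=\; \tfrac{p}{p+s}\, [w^s]\, \phi(w)^{p+s} \;=\; \tfrac{p}{p+s}\,\tfrac{(1+d)^{p+s}}{4^{p+s}}\,\sum_{r\geq 0} \binom{p+s}{r} b^r\, f_r(s),
\]
using $[w^s] g_\nu(w)^r = f_r(s)$. Extracting the coefficient of $b^r d^q$ yields
\[
[u^{p+s}\, b^r\, d^q]\,\tilde H(u,b,d)^p \;=\; \frac{p\cdot 4^{-(p+s)}}{p+s}\,\binom{p+s}{r}\binom{p+s}{q}\,f_r(s),
\]
which, after noting that $\sum_i K(\tau_i) = p + \sum_i X^-(\tau_i) = p+s$, is exactly $\mathbb P\bigl(\sum_i X^+(\tau_i) = r,\ \sum_i X^-(\tau_i) = s,\ \sum_i n_{\tau_i} = q\bigr)$. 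Dividing by $f_p(q)$ then delivers \eqref{FormThmExplicitKernel}; the boundary case $(p,q) = (0,0)$ is handled directly by the absorbing-state convention.

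The hard part is the identity $\tilde H_2 = g_\nu(\tilde H)$: although intuitively a manifestation of self-similarity, making it rigorous requires a careful second application of the excursion decomposition (now at level $1$ of $\sigma$, rather than at level $m$ of $T$) and the combinatorial bookkeeping confirming that the three statistics $K$, $X^+$, $n$ all decompose additively across the attached $\Pi^+$-subtrees. Once this identity is in hand, Lagrange inversion essentially reads off the final formula automatically: the two binomial factors arise from the expansions of $(1 + b\,g_\nu(w))^{p+s}$ and $(1+d)^{p+s}$ inside $\phi(w)^{p+s}$, and the walk density $f_r(s)$ appears as $[w^s] g_\nu(w)^r$.
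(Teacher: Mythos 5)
Your proposal is correct, and it reaches \eqref{FormThmExplicitKernel} by a genuinely different route from the paper. Both arguments share the same first reduction (via Theorem~\ref{discreteMarkovProperty} and the additivity $X_{m+1}^{\pm}=\sum_k y^{\pm}_{\tau_k^{m,+}}$, so that everything comes down to the joint law of $\bigl(\sum y^+_{\tau_i},\sum y^-_{\tau_i},\sum n_{\tau_i}\bigr)$ for i.i.d.\ $\Pi^+$-excursions), but they diverge afterwards. The paper constructs an auxiliary marked tree $\mathfrak T_\tau$ on the label-$1$ vertices of $\tau$, identifies its law $\mathfrak P$ (independent Bernoulli$(1/2)$ marks $\iota_u,\sigma_u$, with a $\nu$-distributed offspring when $\sigma_u=1$), and then computes the joint law via the \L ukasiewicz walk of the associated forest together with Kemperman's cycle lemma. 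You instead encode the same recursive structure analytically: your functional equation $w=u\,\phi(w)$ with $\phi(w)=\tfrac{1+d}{4}\bigl(1+b\,g_\nu(w)\bigr)$ is exactly the trivariate PGF of one step of the paper's description of $\mathfrak P$, and Lagrange inversion plays the role that the cycle lemma plays in the paper. Your two inputs are sound: the root decomposition of a $\Pi^+$-excursion in the binary model does give $\tilde H=\tfrac{u(1+d)}{4}\bigl(1+b\,\tilde H_2\bigr)$, and the self-similarity $\tilde H_2=g_\nu(\tilde H)$ follows from the stopping-line argument at the first hitting points of label $1$ in a $\Pi_2^{[0]}$-tree (the same argument as in Lemma~\ref{lemmanu}); the additive decomposition of $K$, $X^+$ and $n$ across the attached subtrees holds because every label-$1$ vertex, every $1\to 2$ edge and every label-$0$ leaf of $\sigma$ lies in exactly one attached $\Pi^+$-subtree, the first-hit vertices being counted as the roots of those subtrees. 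What the paper's route buys is an explicit probabilistic object (the marked tree $\mathfrak T_\tau$ with $|\mathfrak T_\tau|=y^-_\tau$, $|\mathcal L(\tau)|=n_\tau$, $|\mathcal R(\tau)|=y^+_\tau$) whose law is computed by a direct path-counting argument; what yours buys is brevity and a mechanical derivation of the binomial factors and of $f_r(s)=[w^s]g_\nu(w)^r$ by coefficient extraction. One minor point of rigour to add if you write this up: justify that the probabilistic series $\tilde H$, which has zero constant term in $u$ and satisfies $w=u\,\phi(w)$ with $\phi(0)\neq 0$, is the unique such formal power series in $u$, so that Lagrange inversion indeed applies to it.
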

	\begin{remark}
		\label{RemarkEnum}
		The existence of a closed-form expression for the transition kernel $\mathbf{P}$ is closely related to enumeration formulas for the number of binary trees with a given \emph{vertical edge profile}—that is, trees for which the number of oriented edges between each possible pair of labels is prescribed. Enumeration formulas of this type (see, e.g., Theorem~1 and Theorem~6 in \cite{BousquetChapuy}) were already instrumental in \cite{chapuy2022notedensityiserelated}.	More generally, we believe that similar enumeration results could be obtained for other models, provided one can derive an explicit expression for the transition kernel of the Markov chain $(X^+_k, X^-_k)_{k}$. Such results could yield new formulas for counting (possibly weighted) families of labelled trees with prescribed vertical edge profiles.
		
		Let us make this connection more precise in the case of the binary tree model. If $t \in \mathbb{B}$ and $k \geq 1$, write $(x_k^+(t), x_k^-(t))$ for the number of oriented edges in $t$ from label $k-1$ to $k$ and from $k$ to $k-1$, respectively. Likewise, define $(\check{x}_k^+(t), \check{x}_k^-(t))$ as the number of oriented edges from label $-k$ to $-k+1$ and from $-k+1$ to $-k$. Let $(p_k, q_k)_{k \geq 1}$ and $(\check{q}_k, \check{p}_k)_{k \geq 1}$ be elements of $\mathfrak{S}$ (in particular $q_k=0$ whenever $p_k=0$ and $\check p_k=0$ whenever $\check q_k=0$). Suppose that the sets
		\[
		\mathcal{S} = \left\{ k \geq 1 \mid p_k > 0 \right\}, \quad \check{\mathcal{S}} = \left\{ k \geq 1 \mid \check{q}_k > 0 \right\}
		\]
		are either empty or intervals of the form $\{1, \dots, m\}$ and $\{1, \dots, \check{m}\}$. We are interested in computing the cardinality of the set
		\[
		\mathcal{B} = \left\{ t \in \mathbb{B} \,\middle|\, \forall k \geq 1: (x_k^+(t), x_k^-(t)) = (p_k, q_k), \, (\check{x}_k^+(t), \check{x}_k^-(t)) = (\check{p}_k, \check{q}_k) \right\}.
		\]
		Note that for every $t \in \mathbb{B}$ we have $\Pi_0(t) = 4^{-1 - \sum_{k \geq 1} (p_k + q_k + \check{p}_k + \check{q}_k)}$, and hence:
		\begin{align}
			\label{displayEnum1}
			\Pi_0\left(\left\{\begin{array}{l}
				(X_k^+, X_k^-) = (p_k, q_k), \\
				(\check{X}_k^+, \check{X}_k^-) = (\check{p}_k, \check{q}_k)
			\end{array} \forall k \geq 1 \right\} \right)
			= \text{Card}(\mathcal{B}) \cdot 4^{-1 - \sum_{k \geq 1} (p_k + q_k + \check{p}_k + \check{q}_k)}.
		\end{align}
		Since the displacement laws are symmetric, Corollary~\ref{CoroMarkov} tells us that the processes $(X_k^+, X_k^-)_k$ and $(\check{X}_k^-, \check{X}_k^+)_k$ are independent conditionally on $(X_1^+, X_1^-)$ and share the same transition kernel $\mathbf{P}$. Therefore,
		\begin{align}
			\label{displayEnum2}
			\Pi_0\left(\left\{\begin{array}{l}
				(X_k^+, X_k^-) = (p_k, q_k), \\
				(\check{X}_k^+, \check{X}_k^-) = (\check{p}_k, \check{q}_k)
			\end{array} \forall k \geq 1 \right\} \right)
			= U_{p_1, q_1}^{\check{p}_1, \check{q}_1} \cdot \prod_{k=1}^m \mathbf{P}_{p_k, q_k}^{p_{k+1}, q_{k+1}} \cdot \prod_{k=1}^{\check{m}} \mathbf{P}_{\check{q}_k, \check{p}_k}^{\check{q}_{k+1}, \check{p}_{k+1}},
		\end{align}
		where we define:
		\[
		U_{p_1, q_1}^{\check{p}_1, \check{q}_1} = \Pi_0\left( (X_1^+, X_1^-) = (p_1, q_1), \, (\check{X}_1^+, \check{X}_1^-) = (\check{p}_1, \check{q}_1) \right).
		\]
	 Comparing (\ref{displayEnum1}) and (\ref{displayEnum2}) and using (\ref{FormThmExplicitKernel}), it follows that:
	\begin{align*}
		\text{Card}(\mathcal B)= 4^{m_0+\underset{k\geq 1}{\sum} (m_k +\check m_k)}\cdot U_{p_1, q_1}^{\check p_1, \check q_1}
		\cdot \prod_{i=1}^{\check m}\frac{ \check q_{i}{\check m_i \choose \check q_{i+1} } { \check m_i \choose \check p_{i}}}{\check m_i4^{\check m_i}}\frac{f_{\check q_{j+1}}(\check p_{j+1})}{f_{\check q_j}(\check p_j)}
		\cdot \prod_{j=1}^{m}\frac{ p_{j}{m_j \choose p_{j+1}}{m_j \choose q_{j} }}{m_j 4^{m_j}} \frac{f_{p_{j+1}}(q_{j+1})}{f_{p_j}(q_j)}.
	\end{align*} 	where we wrote $m_0 = \check{p}_1 + q_1 + 1$, $\check{m}_k = \check{p}_{k+1} + \check{q}_k$, and $m_k = p_k + q_{k+1}$. The factors of the form $f_{p_j}(q_j)$ and $f_{\check q_j}(\check p_j)$ cancel out, and since $f_{p_{m+1}}(q_{m+1})=f_{\check q_{\check m+1}}(\check p_{\check m+1})= f_0(0)=1$, this yields :
	\begin{align*}
		\text{Card}(\mathcal B)= \frac{4^{m_0}\cdot U_{p_1, q_1}^{\check p_1, \check q_1}}{f_{p_1}(q_1)f_{\check q_1}(\check p_1)}
		\cdot \prod_{i=1}^{\check m}\frac{ \check q_{i}}{\check m_i}{\check m_i \choose \check q_{i+1} } { \check m_i \choose \check p_{i}} 
		\cdot \prod_{j=1}^{m}\frac{ p_{j}}{m_j}{m_j \choose p_{j+1}}{m_j \choose q_{j} }.
	\end{align*}
		In fact, a special case of Theorem~6 in \cite{BousquetChapuy} implies that:
		\[
		\text{Card}(\mathcal{B}) = \frac{1}{m_0} \binom{m_0}{p_1} \binom{m_0}{\check{q}_1} \cdot \prod_{i=1}^{\check{m}} \frac{\check{q}_i}{\check{m}_i} \binom{\check{m}_i}{\check{q}_{i+1}} \binom{\check{m}_i}{\check{p}_i}
		\cdot \prod_{j=1}^{m} \frac{p_j}{m_j} \binom{m_j}{p_{j+1}} \binom{m_j}{q_j},
		\]
		and in particular, we deduce:
		\[
		U_{p_1, q_1}^{\check{p}_1, \check{q}_1} = \frac{4^{-1 - \check{p}_1 - q_1}}{1 + \check{p}_1 + q_1} \binom{1 + \check{p}_1 + q_1}{p_1} \binom{1 + \check{p}_1 + q_1}{\check{q}_1} f_{p_1}(q_1) f_{\check{q}_1}(\check{p}_1).
		\]
	It is possible to recover the formula in Theorem~\ref{ThmExplicitKernel} from Theorem~6 in \cite{BousquetChapuy} but this requires some work in particular to retrieve the “missing factor” $f_r(s)/f_p(q)$. For this reason, we provide a proof of Theorem~\ref{ThmExplicitKernel} independent of \cite{BousquetChapuy}, with the aim of offering a more probabilistic interpretation of the kernel $\mathbf{P}$ and shedding new light on the aforementioned enumeration result.
	\end{remark}
	Before proceeding to the proof of Theorem~\ref{ThmExplicitKernel}, we make a few preliminary observations. For every tree excursion $\tau$, write  $y^+_\tau$ the number of oriented edges going from label $1$ to label $2$, and by $y^-_\tau$ the number of oriented edges going from label $2$ to label $1$ (following the notations in Remark \ref{RemarkEnum} we have equivalently $y^-_\tau=x^-_2(\tau)$ and $y^+_\tau=x^+_2(\tau)$). Then, for every $m \geq 1$, note that we have:
	\begin{align}
		\label{Xfromy}
		X_{m+1}^+ = \sum_{k=1}^{X_m^+} y_{\tau_k^{m, +}}^+, \qquad X_{m+1}^- = \sum_{k=1}^{X_m^+} y_{\tau_k^{m, +}}^-.
	\end{align}
		\begin{figure}[h!]
		\label{Fig2}
		\centering
		\includegraphics[width=0.55\textwidth]{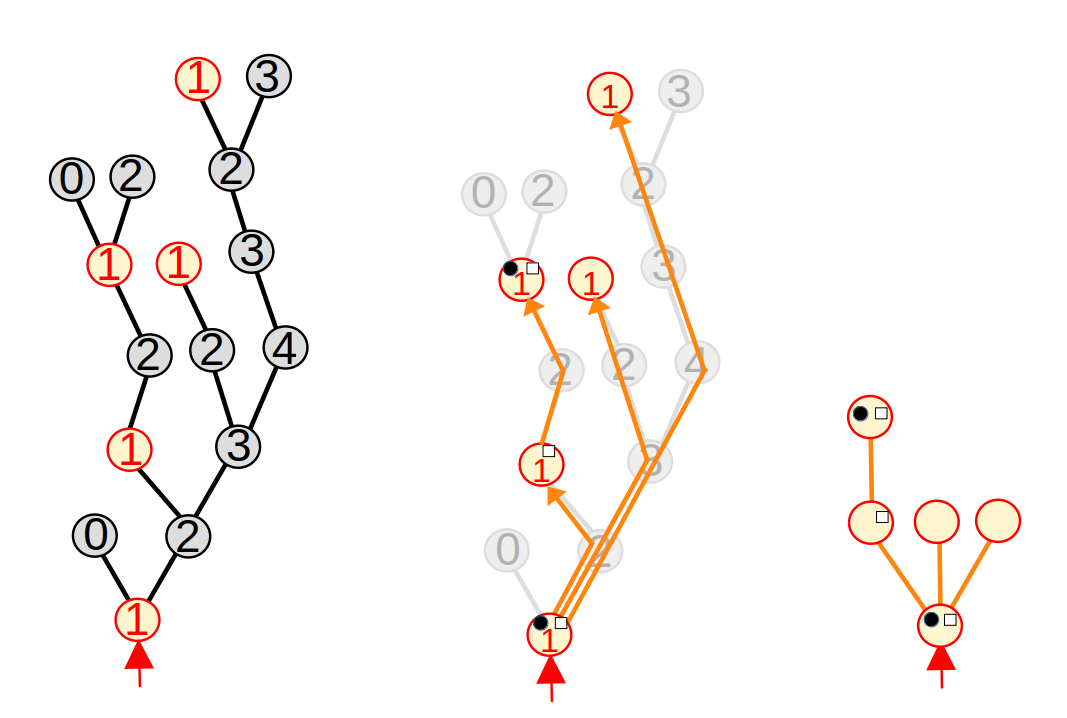}
		\caption{Consider a positive binary tree excursion $\tau$ and highlight the vertices with label $1$ (left). Connect these vertices through the geodesics in the tree $\tau$, keeping the planar structure in $\tau$ (middle). The tree $\mathfrak T_\tau$ is the tree obtained this way (right). The vertices with a right child (resp. left child) in $\tau$ are marked with a white square (resp. a black circle) and are the elements of $\mathcal R(\tau)$ (resp. $\mathcal L(\tau)$).}
	\end{figure}
	Let $\tau \in \mathcal{E}_+$ be a positive tree excursion sampled according to the law $\Pi^+$. Denote by $D$ the number of vertices labelled $1$ in $\tau$, and let $\mathcal{V} = \{u_1, \dots, u_D\}\subset V(\tau)$ be the set of all such vertices.
		We now define a tree $\mathfrak{T}_\tau$ with vertex set $\mathcal{V}$, where an edge is drawn from $u$ to $v$ if and only if $u$ is a strict ancestor of $v$ in $\tau$, and no other vertex of $\mathcal{V}$ lies strictly between $u$ and $v$ along the ancestral line. The tree $\mathfrak{T}_\tau$ inherits a natural plane tree structure from $\tau$.
	
	Next, define $\mathcal{L}(\tau) \subset \mathcal{V}$ as the set of vertices in $\mathcal{V}$ that have a left child in $\tau$, and similarly, let $\mathcal{R}(\tau) \subset \mathcal{V}$ be the set of those with a right child. Note that any vertex $v \in \mathcal{V} \setminus \mathcal{R}(\tau)$ must be a leaf in $\mathfrak{T}_\tau$, though the converse is not necessarily true. In summary, the triple $(\mathfrak{T}_\tau, \mathcal{L}(\tau), \mathcal{R}(\tau))$ defines a plane tree equipped with two distinguished subsets of vertices. This construction is illustrated in Figure~2 and will play a key role in the analysis that follows. Let $\mathfrak{P}$ denote the law of the triplet $(\mathfrak{T}_\tau, \mathcal{L}(\tau), \mathcal{R}(\tau))$, viewed as a probability distribution over the set of triples $(T, L, R)$, where $T$ is a plane tree and $L$, $R$ are two distinguished subsets of vertices in $T$.
	
	Observe that, by construction, we have:
	\begin{align}
		\label{yfromFrakT}
		|\mathfrak{T}_\tau| = y^-_\tau, \qquad |\mathcal{L}(\tau)| = n_\tau, \qquad |\mathcal{R}(\tau)| = y^+_\tau,
	\end{align}
	where $|\mathcal{L}(\tau)|$ and $|\mathcal{R}(\tau)|$ denote the cardinalities of $\mathcal{L}(\tau)$ and $\mathcal{R}(\tau)$, respectively, and we recall that $|\mathfrak{T}_\tau|$ is the number of edges in $\mathfrak{T}_\tau$ (equivalently, it is one less than the number of vertices labelled $1$ in $\tau$).
	The second and third equalities follow directly from the definitions of $\mathcal{L}(\tau)$ and $\mathcal{R}(\tau)$, noting that each vertex in the tree has at most one left and one right child. For the first equality, observe that every vertex in $\mathcal{V}$, except the root, is connected to its parent in $\tau$ by an oriented edge from label $2$ to label $1$, which contributes to $y^-_\tau$ and that we have $\text{Card}(\mathcal V)= 1+|\mathfrak T_\tau|$ since $\mathfrak T_\tau$ is a tree.\\

		We now provide a more "inductive" description of the law $\mathfrak{P}$, as inherited from the Markovian construction of the tree $\tau$ under the law $\Pi_+$. 
		Informally,	consider a vertex $v$ labelled $1$ in $\tau$, and let $\tau^{(v)}$ denote the subtree of $\tau$ induced by all vertices that are \emph{not descendants} of $v$. The subtree branching from $v$ is independent of $\tau^{(v)}$. Moreover, the vertex $v$ has a right child with probability $1/2$ (implying $v \in \mathcal{R}(\tau)$), and independently, a left child with probability $1/2$ (implying $v \in \mathcal{L}(\tau)$).	Conditionally on $v$ having a right child $u$ in $\tau$, the children of $v$ in the tree $\mathfrak{T}_\tau$ correspond to the first hitting points of label $1$ in the subtree branching from $u$ in $\tau$. In particular, the number of such children follows the law $\nu$. More rigorously, the law $\mathfrak{P}$ admits the following inductive description:\\
		
		Fix $n \geq 0$, and suppose that the set of vertices at generation $n$ has already been constructed. We construct generation $n+1$ as follows: for each vertex $u$ at generation $n$, sample two independent Bernoulli random variables $\iota_u$ and $\sigma_u$ with parameter $1/2$, independent of each other and of the past. If $\sigma_u = 1$, then $u$ gives birth to a random number $k_u$ of children at generation $n+1$, where $k_u$ is sampled according to the law $\nu$, independently of the past. Otherwise, $u$ gives birth to no child at generation $n+1$.
		
		This procedure is iterated inductively starting from a single root vertex at generation $0$. The process stops almost surely and we let $T$ be the resulting genealogical tree. We define:
		\[
		R_T = \{u \in T : \sigma_u = 1\}, \qquad L_T = \{u \in T : \iota_u = 1\}.
		\]
		Then the triplet $(T, L_T, R_T)$ is distributed according to the law $\mathfrak{P}$.
	A rigorous proof of this claim could be derived using a stopping line argument, similar to the approach used in the proof of Lemma~\ref{lemmanu} and Proposition~\ref{distributionForest}. Since such arguments are already well illustrated earlier in the paper, we omit the details here.\\
		
		Armed with this description of the law $\mathfrak{P}$, we will prove the following:
\begin{lemma}
	\label{LemmaJointlawFrakTs}
	Fix $p\geq 1$ and let $(T_i,  L_i,  R_i)_{i\leq p}$ be independent marked trees with law $\mathfrak P$. For every $q, r, s\geq 0$, we have:
	\begin{align}
		\label{JointlawFrakTs}
		\mathbb P\left(\sum_{k=1}^p| T_k|=s, \ \sum_{k=1}^p|L_k|= q, \  \sum_{k=1}^p |R_k|=r\right)=4^{-p-s}\frac{p}{p+s}{p+s\choose q}{p+s\choose r}f_r(s).
	\end{align}
\end{lemma}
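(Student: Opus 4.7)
The plan is to decouple the role of $L$ from the other statistics using the explicit construction of $\mathfrak{P}$, then extract the joint law of $\left(\sum_k|T_k|,\sum_k|R_k|\right)$ via Lagrange inversion.

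In the inductive description of $\mathfrak{P}$ recalled just before the lemma, the set $L_T=\{u:\iota_u=1\}$ is driven by Bernoulli$(1/2)$ coins that are independent of the $\sigma$-coins, the $\nu$-distributed offspring counts, and hence of the pair $(T,R_T)$. Consequently, conditionally on $T$, the variable $|L_T|$ is $\mathrm{Binomial}(|V(T)|,1/2)$ and independent of $R_T$. Summing over the forest of $p$ independent marked trees, whose total vertex count is $p+\sum_k|T_k|$, this gives
\begin{equation*}
\mathbb{P}\!\left(\sum_k|T_k|=s,\,\sum_k|L_k|=q,\,\sum_k|R_k|=r\right)=\binom{p+s}{q}\,2^{-(p+s)}\,\mathbb{P}\!\left(\sum_k|T_k|=s,\,\sum_k|R_k|=r\right),
\end{equation*}
reducing the lemma to showing
\begin{equation*}
\mathbb{P}\!\left(\sum_k|T_k|=s,\,\sum_k|R_k|=r\right)=\frac{p}{p+s}\binom{p+s}{r}f_r(s)\,2^{-(p+s)}.
\end{equation*}

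Next I would write a functional equation for $\Phi(x,z):=\mathbb{E}[x^{|T|}z^{|R|}]$ under $\mathfrak{P}$, viewed as a formal power series. With probability $1/2$ the root has $\sigma_\rho=0$, contributing $1/2$; with probability $1/2$ it has $\sigma_\rho=1$, lies in $R$, and has $k\sim\nu$ children whose subtrees are i.i.d.\ copies of law $\mathfrak{P}$, contributing $(z/2)\,g_\nu(x\Phi)$. Summing,
\begin{equation*}
\Phi(x,z)=\tfrac{1}{2}\bigl(1+z\,g_\nu(x\Phi(x,z))\bigr).
\end{equation*}
Setting $U(x,z):=x\Phi(x,z)$, this becomes $U=x\cdot\tfrac{1}{2}\bigl(1+z\,g_\nu(U)\bigr)$, precisely the Lagrange--Bürmann setup $U=x\,H(U,z)$ with $H(u,z):=(1+zg_\nu(u))/2$.

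To finish, I would apply Lagrange--Bürmann to $U^p$. By independence of the $p$ marked trees, the joint generating function of $\left(\sum_k|T_k|,\sum_k|R_k|\right)$ is $\Phi^p$, so the probability of interest equals $[x^sz^r]\Phi^p=[x^{p+s}z^r]U^p$. Lagrange's formula then yields
\begin{equation*}
[x^{p+s}]U^p=\frac{p}{(p+s)\,2^{p+s}}\,[U^s]\bigl(1+z\,g_\nu(U)\bigr)^{p+s},
\end{equation*}
and the binomial theorem gives $[z^r](1+zg_\nu(U))^{p+s}=\binom{p+s}{r}g_\nu(U)^r$, while $[U^s]g_\nu(U)^r=f_r(s)$ by the very definition of $f_r$. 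Combining with the binomial factor from the first step produces the claimed identity.

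The only substantive step is the Lagrange inversion, which is routine once one verifies that $U=x\Phi$ has no constant term in $x$ — automatic since $\Phi(0,z)=1/2$, so $U=x/2+O(x^2)$. The independence/binomial reduction for $L$ is immediate from the construction of $\mathfrak{P}$, so I expect no real obstacle there.
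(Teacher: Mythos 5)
Your argument is correct, and it reaches the formula by a genuinely different route from the paper. You first peel off the $L$-marks by observing that the $\iota$-coins are independent of the pair $(T,R_T)$, so that conditionally on the forest the total $\sum_k|L_k|$ is $\mathrm{Binomial}(p+s,1/2)$; the paper does the same thing, only phrased through the i.i.d.\ sequence $(\iota_j)_j$ attached to the depth-first enumeration. Where the two proofs diverge is in how they produce the factor $\tfrac{p}{p+s}\binom{p+s}{r}f_r(s)\,2^{-(p+s)}$: the paper encodes the forest by its \L ukasiewicz walk, couples the walk increments with the $\sigma$-coins (increment $-1$ when $\sigma_j=0$, increment $\nu$-distributed minus one when $\sigma_j=1$), computes the joint law of $(S_n,\sum\sigma_j,\sum\iota_j)$, and converts the hitting-time event $\{h_{-p}=p+s\}$ into the fixed-time event $\{S_{p+s}=-p\}$ via Kemperman's formula; you instead derive the functional equation $\Phi=\tfrac12(1+z\,g_\nu(x\Phi))$ for the joint generating function and apply Lagrange--Bürmann to $U^p$ with $U=x\Phi$. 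These are two avatars of the same cyclic-symmetry phenomenon (Kemperman's formula and Lagrange inversion are interchangeable here, and both require the exchangeability/cyclic-invariance check you implicitly perform by treating the marked offspring data as i.i.d.\ per vertex), so neither is more general, but your version is more self-contained analytically, while the paper's version keeps the probabilistic coupling explicit and reuses machinery (stopping lines, walk encodings) already present elsewhere in the text. The only points worth stating a bit more carefully in a final write-up are (i) that $\Phi^p$ is indeed the joint probability generating function of $\left(\sum_k|T_k|,\sum_k|R_k|\right)$ by independence of the $p$ marked trees, and (ii) the identity $[u^s]g_\nu(u)^r=f_r(s)$, which holds because $g_\nu^r$ is the generating function of $S_r$; both are immediate, so there is no gap.
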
\noindent Before proceeding to the proof of Lemma~\ref{LemmaJointlawFrakTs}, let us first explain how it implies Theorem~\ref{ThmExplicitKernel}.
		\begin{proof}[Proof of Theorem~\ref{ThmExplicitKernel}]
			Fix $(p, q) \in \mathfrak{S}$ and $(r, s) \in \mathfrak{S}$, and suppose that $p \geq 1$ (the case $p = 0$ is trivial). Let $\tau_1, \dots, \tau_p$ be $p$ independent tree excursions sampled under the law $\Pi^+$.
			According to Theorem~\ref{discreteMarkovProperty}, conditionally on $\{(X_m^+, X_m^-) = (p, q)\}$, the law of the (unordered) collection of excursions above level $m$ is the same as the law of $\{\tau_1, \dots, \tau_p\}$ conditioned on the event $\{n_{\tau_1} + \cdots + n_{\tau_p} = q\}$. In particular, it follows from \eqref{Xfromy} that:
		\begin{align*}
			\mathbf P_{p, q}^{r,s}&= \mathbb P\left(\sum_{k=1}^py^+_{\tau_k}= r, \ \sum_{k=1}^py^-_{\tau_k}=s \ \Big|\  \sum_{k=1}^p n_{\tau_k}=q\right)\\
			&=\frac{1}{\mathbb P\left(\sum_{k=1}^pn_{\tau_k}=q\right)}\mathbb P\left(\sum_{k=1}^py^+_{\tau_k}= r, \sum_{k=1}^py^-_{\tau_k}=s, \sum_{k=1}^p n_{\tau_k}=q\right)\\
			&=\frac{1}{f_p(q)}\mathbb P\left(\sum_{k=1}^py^+_{\tau_k}= r, \sum_{k=1}^py^-_{\tau_k}=s, \sum_{k=1}^p n_{\tau_k}=q\right)
		\end{align*}	
		But according to \eqref{yfromFrakT}, we have:
		\begin{align*}
			\mathbb P\left(\sum_{k=1}^py^+_{\tau_k}= r, \ \sum_{k=1}^py^-_{\tau_k}=s, \ \sum_{k=1}^p n_{\tau_k}=q\right)&=\mathbb P\left(\sum_{k=1}^p|\mathcal R(\tau_k)|= r, \ \sum_{k=1}^p|\mathfrak T_{\tau_k}|=s, \ \sum_{k=1}^p |\mathcal L(\tau_k)|=q\right).
		\end{align*}
	The formula in Theorem~\ref{ThmExplicitKernel} now follows directly from Lemma~\ref{LemmaJointlawFrakTs}.
		\end{proof}
	It remains to prove Lemma~\ref{LemmaJointlawFrakTs}.
		\begin{proof}[Proof of Lemma~\ref{LemmaJointlawFrakTs}]
			Let $(T_i,  L_i,  R_i)_{i\geq 1}$ be independent marked trees with law $\mathfrak P$. Consider the (infinite) ordered forest $F=(T_1, T_2, \cdots)$ and write $(w_1, w_2 \cdots)$ for the enumeration of the vertices of $F$ in depth-first-order (first enumerating all vertices of $T_1$ then all vertices of $T_2$ etc.). Fix $j\geq 1$ and let $l$ be such that $w_j\in T_l$. Write $k_j$ for the number of children of $w_j$ in the tree $T_l$ and:
			\begin{align*}
			\sigma_j=\mathds 1_{w_j\in R_l}, \quad \quad  \text{ and } \quad \quad \iota_j= \mathds 1_{w_j \in L_l}.
			\end{align*}
			It follows from the description of the law $\mathfrak P$ that $(\sigma_j)_{j\geq 1}$ and $(\iota_j)_{j\geq 1}$ are independent collections of Bernoulli variables of parameter $1/2$.
			
			We consider $(S_i)_{i\geq 0}$ the so-called \emph{\L ukasiewicz} walk associated with the enumeration $(w_j)_{j\geq 1}$ of the vertices of $F$. In other words, we let $S_0=0$ and for every $i\geq 1$:
			\begin{align*}
				S_i= \sum_{j=1}^i(k_j-1).
			\end{align*}
			It follows from the description of the law $\mathfrak P$ that $S_i$ is a random walk with i.i.d. steps which are independent of $(\iota_j)_j$ and are coupled with $(\sigma_j)_j$ in such a way that conditionally on $(\sigma_j)_j$:
			\begin{itemize}
				\item[(i)] we have $S_{j}-S_{j-1}=-1$ for every $j\geq 1$ such that $\sigma_j=0$,
				\item[(ii)] the random variables $(S_j-S_{j-1}+1)_{j\geq 1, \sigma_j=1}$ are independent with law $\nu$.
			\end{itemize}
			In particular, note that for every $n\geq 1$, we have:
			\begin{align*} S_n+n= \sum_{\substack{1\leq j\leq n\\ \sigma_j=0}}\underbrace{(S_j-S_{j-1}+1)}_{=0}+\sum_{\substack{1\leq j\leq n\\ \sigma_j=1}}\underbrace{(S_j-S_{j-1}+1)}_{=k_j}= \sum_{\substack{1\leq j\leq n\\ \sigma_j=1}} k_j.
			\end{align*}
			In particular, conditionally on $\{\sum_{j=1}^n \sigma_j=r\}$ for some $r\geq 0$, the random variable $S_n+n$ is the sum of $r$ independent random variables with law $\nu$. Using this remark and the independence of $(\iota_j)_j$ from $(S_j)_j$ and $(\sigma_j)_j$, it follows that for every $n\geq 1$ and every $r, \ell, q \geq 0$, we have:
			\begin{align*}
				\mathbb P\left(S_n=\ell, \ \sum_{j=1}^n\sigma_j=r, \ \sum_{j=1}^n \iota_j=q\right)&= \mathbb P\left(S_n+n=\ell+n \ \Big| \ \sum_{j=1}^n\sigma_j=r\right)\mathbb P\left( \sum_{j=1}^n\sigma_j=r\right)\mathbb P\left(\sum_{j=1}^n\iota_j=q\right)\\
				&=f_r(\ell+n)\mathbb P\left( \sum_{j=1}^n\sigma_j=r\right)\mathbb P\left(\sum_{j=1}^n\iota_j=q\right)
			\end{align*}
			But $\sum_{j\leq n} \iota_j$ and $\sum_{j\leq n}\sigma_j$ are binomial $\mathcal B(n, \frac 12)$ random variables. We get:
			\begin{align}
				\label{ExprJoinS}
				\mathbb P\left(S_n=\ell, \sum_{j=1}^n\sigma_j=r, \sum_{j=1}^n \iota_j=q\right)&= f_r(\ell+n)\frac{1}{2^n}{n \choose r}\frac{1}{2^n}{n\choose q}.
			\end{align}
			Fix $p\geq 1$ and write $h_{-p}=\inf\{k\geq 0 \mid S_k=-p\}$ for the first hitting time of $-p$ by the {\L ukasiewicz} walk. It follows from the standard property of \L ukasiewicz walks that $h_{-p}$ is also the first time at which we complete the  exploration of the tree $T_{p}$ in the enumeration $(w_k)_k$. In other words we have:
			\begin{align*}
				h_{-p}= \sum_{k=1}^p |T_k|+p,
			\end{align*}
			since the total number of vertices in $(T_1, \cdots, T_p)$ is $\sum_{k\leq p} |T_k|+p$. Fix $q, r, s\geq 0$, we then have:
			\begin{align*}
				\mathbb P\left(\sum_{j=1}^p| T_j|=s, \ \sum_{j=1}^p|R_j|= r, \ \sum_{j=1}^p |L_j|=q\right)&=\mathbb P\left(h_{-p}=p+s, \ \sum_{j=1}^{p+s}\sigma_j= r, \ \sum_{j=1}^{p+s}\iota_j= q\right).
			\end{align*}
			We may now apply Kemperman's formula (see e.g. Section $6.1$ in \cite{PitmanCombin}) to obtain:
			\begin{align}
				\label{Kampermann}
				\mathbb P\left( h_{-p}=p+s, \ \sum_{j=1}^{p+s}\sigma_j= r, \ \sum_{j=1}^{p+s}\iota_j= q\right)= \frac{p}{p+s}\mathbb P\left(S_{p+s}=-p, \ \sum_{j=1}^{p+s}\sigma_j= r, \ \sum_{j=1}^{p+s}\iota_j= q \right).
			\end{align}
			Indeed, $(k_j, \iota_j, \sigma_j)_{j\leq p+s}$ are i.i.d. random variables, and the event $\{\sum_{j=1}^{p+s}\sigma_j= r, \sum_{j=1}^{p+s}\iota_j= q\}$ is invariant under cyclic permutations of these triples. Hence, by the classical cyclic lemma, Kemperman's formula can be applied in this case.
			Using (\ref{ExprJoinS}) with $\ell=-p$ and $n=p+s$, we get formula (\ref{JointlawFrakTs}).
		\end{proof}

\paragraph{Kernel for the conditioned probability}	Fix $V\geq 1$. The conditional probability measure $\Pi_0^{(V)}$ corresponds to the uniform distribution over the set $\mathbb B^V$ of incomplete binary trees with $V$ edges. In other words, we have  $\Pi_0^{(V)}(T)= C_{V+1}^{-1}$ for every $t\in \mathbb B^V$,
where $C_{V+1}=\frac{1}{V+2}{2V+2 \choose V+1}$ is the Catalan number counting such trees.	Recall that $M_m^-$ is the number of edges in the tree whose endpoints both have labels at most label $m-1$. The process $(X_m^+, X_m^-, M_m^-)_m$ evolves in the state space:
	\begin{align*}
		\mathfrak S^{(V)} = (\mathbb N_{> 0} \times \mathbb N_{\geq 0}\times \{0, 1, \cdots, V\})\cup\{(0, 0, V)\}.
	\end{align*}
	For $(p, q, v),(r, s, w)\in {\mathfrak S}^{(V)}$ and $m\geq 1$ such that $\Pi_0^{(V)}((X_{m}^+, X_{m}^-, M_{m}^-)=(p, q, v))>0$, let:
	\begin{align*}
	\tilde {\mathbf P}_{p, q, v}^{r,s, w}&= \Pi_0^{(V)}\left((X_{m+1}^+, X_{m+1}^-, M_{m+1}^-)=(r, s, w) \mid (X_{m}^+, X_{m}^-, M_{m}^-)=(p, q, v) \right),
\end{align*}
be the transition kernel of the Markov chain $(X^+_m, X^-_m, M_m^-)_m$ under $\Pi_0^{(V)}$.	From the general Doob's $h$-transform identity \eqref{DHTRANS} recall that we have:
	\begin{align}	
		\label{HtransfoBinary}
			\tilde {\mathbf P}_{p, q, v}^{r,s, w}
		&=\frac{H(r, s, w)}{H(p, q, v)}\Pi_0\left((X_{m+1}^+, X_{m+1}^-, M_{m+1}^-)=(r, s, w) \mid (X_{m}^+, X_{m}^-, M_{m}^-)=(p, q, v) \right),
	\end{align}
	with the function $H$ defined in \eqref{HarmoFunc}. For every $(p, q, v)\in\mathfrak S^{(V)}$ define:
	\begin{align}
		\label{tildepi}
		\tilde f_p(q, l)= \mathbb P\left(  \sum_{k=1}^pn_{\tau_k}=q, \ \sum_{k=1}^p|\tau_k|= l\right),
	\end{align}
	with $\tau_1, \cdots, \tau_p$ are i.i.d. tree excursions distributed according to $\Pi^+$. In particular, note that \begin{align}
		\label{SimpliH}
		H(p, q, v)= \tilde f_p(q, V-v-p)/f_p(q),
	\end{align} where as before $f_p(q)= \mathbb P\left(  \sum_{k=1}^pn_{\tau_k}=q\right)$. For the binary tree model, no edges connect two vertices of same label, it follows that for every $m\geq 1$:
	 \begin{align*}
	 	M_{m+1}^-=M_m^-+X_m^++X_m^-.
	 \end{align*} In particular, if $\tilde {\mathbf P}_{p, q, v}^{r,s, w}>0$ then necessarily $w=v+p+q$ and we have in this case:
	\begin{align*}
		\Pi_0&\left((X_{m+1}^+, X_{m+1}^-, M_{m+1}^-)=(r, s, v+p+q) \mid (X_{m}^+, X_{m}^-, M_{m}^-)=(p, q, v) \right)\\
		&\qquad \qquad = \Pi_0\left((X_{m+1}^+, X_{m+1}^-)=(r, s) \mid (X_{m}^+, X_{m}^-, M_m^-)=(p, q, v) \right).
	\end{align*}
	Recall from the proof of Claim \ref{claim} that the law of $(X_{m+1}^+, X_{m+1}^-)$ under $\Pi_0$, conditionally on $\{ (X_{m}^+, X_{m}^-, M_m^-)=(p, q, v)\}$ is the same as if we only condition on $\{(X_{m}^+, X_{m}^-)=(p, q)\}$. In particular:
	\begin{align*}
			\Pi_0&\left((X_{m+1}^+, X_{m+1}^-, M_{m+1}^-)=(r, s, v+p+q) \mid (X_{m}^+, X_{m}^-, M_{m}^-)=(p, q, v) \right)\\
		&\qquad \qquad= \Pi_0\left((X_{m+1}^+, X_{m+1}^-)=(r, s) \mid (X_{m}^+, X_{m}^-)=(p, q) \right)\\
		&\qquad  \qquad = \mathbf P_{p, q}^{r, s}.
	\end{align*}
	Combining (\ref{FormThmExplicitKernel}), (\ref{HtransfoBinary}) and (\ref{SimpliH}), we finally get that:
	\begin{align*}
	\tilde {\mathbf P}_{p, q, v}^{r,s, v+p+q}=\frac{\tilde f_r(s, V-v-p-q-r)/f_r(s)}{\tilde f_p(q, V-v-p)/f_p(q)}
			\frac{p4^{-p-s}}{p+s}{p+s\choose r}{p+s\choose q}\frac{ f_r(s )}{ f_p(q)},
	\end{align*}
	which simplifies to:
	\begin{theorem}[Kernel for the conditioned binary tree model]
		\label{ThmExplicitKernelCondi}
		For every $p, q, r, s, v, w$ such that $(p, q, v)\in \mathfrak S^{(V)}$ and $(r, s, w)\in \mathfrak S^{(V)}$ we have:
		\begin{align*}
			\tilde {\mathbf P}_{p, q, v}^{r,s, w}=\left\{\begin{array}{l}
				\displaystyle	\frac{p4^{-p-s}}{p+s}{p+s\choose r}{p+s\choose q}\frac{\tilde f_r(s, V-w-r )}{\tilde f_p(q, V-v-p )} \quad \text{ if $w= v+p+q$},\\ \\
				\displaystyle	0 \qquad \qquad \qquad \qquad \quad \quad \quad \quad \quad\quad \quad \quad \quad \quad \text{ otherwise},
			\end{array}\right.
		\end{align*}
		where $\tilde f$ is defined in (\ref{tildepi}).
	\end{theorem}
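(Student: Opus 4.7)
The plan is to combine three ingredients already in place: the Doob $h$-transform identity \eqref{DHTRANS}, the closed form of the unconditioned kernel from Theorem~\ref{ThmExplicitKernel}, and the explicit expression of the harmonic function $H$ coming from \eqref{HarmoFunc} and \eqref{tildepi}. Most of this assembly is already sketched in the paragraph preceding the statement, so the ``proof'' is essentially a careful substitution.

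First, I would exploit the fact that in the binary model the label strictly changes along each edge (since the supports of $\eta^{(1)}$ and $\eta^{(2)}$ avoid $0$). Consequently every edge is counted by exactly one of $M_m^-$, $X_m^+$, $X_m^-$ or lies strictly above level $m$, and in particular every edge crossing level $m-1/2$ is accounted for by $X_m^+ \cup X_m^-$. Rewriting $M_{m+1}^- = M_m^- + X_m^+ + X_m^-$ deterministically shows that $\tilde{\mathbf P}_{p,q,v}^{r,s,w}=0$ unless $w = v+p+q$, which handles the ``otherwise'' branch of the kernel.

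Next, assuming $w=v+p+q$, I would reduce the 3D transition probability to a 2D one. The argument used in the proof of Claim~\ref{claim} shows that, under $\Pi_0$, conditionally on $(X_m^+,X_m^-,M_m^-)=(p,q,v)$ the distribution of $(\mathtt c_t^{m,+})_{t\in\mathcal E_+}$ depends only on $(p,q)$, so
\begin{align*}
\Pi_0\!\left((X_{m+1}^+,X_{m+1}^-)=(r,s)\,\Big|\,(X_m^+,X_m^-,M_m^-)=(p,q,v)\right)=\mathbf P_{p,q}^{r,s},
\end{align*}
with $\mathbf P_{p,q}^{r,s}$ given by Theorem~\ref{ThmExplicitKernel}. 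Multiplying by the deterministic jump in the third coordinate identifies the unconditioned 3D kernel appearing in \eqref{HtransfoBinary}.

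Finally, I would compute $H$ explicitly. Starting from \eqref{HarmoFunc}, writing out the conditional probability as a ratio of joint probabilities of sums of i.i.d.\ positive excursions $\tau_k\sim\Pi^+$, and recognising the numerator and denominator as $\tilde f_p(q,V-v-p)$ and $f_p(q)$ from \eqref{tildepi} yields $H(p,q,v)=\tilde f_p(q,V-v-p)/f_p(q)$, giving \eqref{SimpliH}. Plugging this into \eqref{HtransfoBinary} together with the explicit formula from Theorem~\ref{ThmExplicitKernel},
\begin{align*}
\tilde{\mathbf P}_{p,q,v}^{r,s,v+p+q}
=\frac{\tilde f_r(s,V-v-p-q-r)/f_r(s)}{\tilde f_p(q,V-v-p)/f_p(q)}\cdot\frac{p\,4^{-p-s}}{p+s}\binom{p+s}{r}\binom{p+s}{q}\frac{f_r(s)}{f_p(q)},
\end{align*}
the $f_r(s)$ and $f_p(q)$ factors cancel pairwise, and the constraint $w=v+p+q$ turns $V-v-p-q-r$ into $V-w-r$, delivering the stated formula. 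There is no real obstacle here beyond bookkeeping; the only place where one must be mildly careful is the identification $H(p,q,v)=\tilde f_p(q,V-v-p)/f_p(q)$, since this requires observing that the conditioning event in \eqref{HarmoFunc} matches the denominator appearing in \eqref{tildepi}, but this is immediate from the definition of conditional probability.
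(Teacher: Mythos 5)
Your proposal is correct and follows essentially the same route as the paper: the deterministic identity $M_{m+1}^- = M_m^- + X_m^+ + X_m^-$ (valid because no edge of a binary tree joins two equal labels) gives the support constraint $w=v+p+q$, the argument of Claim~\ref{claim} reduces the 3D unconditioned kernel to $\mathbf P_{p,q}^{r,s}$, and substituting $H(p,q,v)=\tilde f_p(q,V-v-p)/f_p(q)$ into the $h$-transform identity \eqref{HtransfoBinary} yields the stated formula after the $f_r(s)$ and $f_p(q)$ cancellations. This matches the paper's derivation step for step.
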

	\begin{remark}
	We mention that following the same line of reasoning as in Remark \ref{RemarkEnum}, we can use Theorem~6 in \cite{BousquetChapuy} to prove that the probability:
		\begin{align*}
		\tilde 	U_{p_1, q_1}^{\check p_1, \check q_1, v}=	\Pi_0^{(V)}\left((X_1^+, X_1^-)=(p_1, q_1), \
			(\check X_1^+, \check X_1^-)=(\check p_1, \check q_1), \ M_1^-= v\right),
		\end{align*}
		has the explicit expression:
	\begin{align*}
	\tilde U_{p_1, q_1}^{\check{p}_1, \check{q}_1, v} = \binom{1 + \check{p}_1 + q_1}{p_1} \binom{1 + \check{p}_1 + q_1}{\check{q}_1}\frac{\tilde f_{p_1}(q_1, V-v-p_1)\cdot \tilde f_{\check q_1}(\check p_1, v-\check q_1)\cdot 4^{ \check{p}_1 +q_1-V}}{(1 + \check{p}_1 + q_1)\cdot C_{V+1}}.
	\end{align*}
	\end{remark}

	\section{Applications to Random Maps}
	
	A \emph{planar map} is an equivalence class (under orientation-preserving homeomorphisms of the sphere) of finite graphs embedded in the sphere. Multiple edges are allowed, and the boundary of a face need not be a simple cycle.	We focus here on \emph{pointed quadrangulations}, which are planar maps $\mathbf{q}$ satisfying the following properties:
	\begin{itemize}
		\item[\textbullet] Every face of $\mathbf{q}$ has degree $4$ (equivalently, the dual graph is $4$-valent).
		\item[\textbullet] The map is \emph{rooted}—that is, a distinguished oriented edge $e$ is specified. We denote by $\mathbf{x}_0$ and $\mathbf{x}_1$ the origin and endpoint of $e$, respectively.
		\item[\textbullet] The map is \emph{pointed}—that is, it has a distinguished vertex $\mathbf{x}_\star \in V(\mathbf{q})$, which may or may not coincide with $\mathbf{x}_0$ or $\mathbf{x}_1$.
	\end{itemize}
	
	\noindent Each pointed quadrangulation is necessarily bipartite and, thanks to the root edge, has no nontrivial automorphisms. For each $n \geq 1$, we denote by $\mathcal{Q}_n^{\bullet}$ the set of all pointed quadrangulations with $n$ faces, and write
	\[
	\mathcal{Q}^\bullet = \bigsqcup_{n \geq 1} \mathcal{Q}_n^\bullet,
	\]
	for the set of all such quadrangulations. For $q \in \mathcal{Q}^\bullet$, we denote by $|q|$ the number of faces (so that $q \in \mathcal{Q}_n^\bullet$ implies $|q| = n$), and by $\Delta_q$ the graph distance on the vertex set of $q$. We will write in the following:
	\begin{equation}
		\label{dstar}
		d_\star = \max\{ \Delta_q(\mathbf{x}_0, \mathbf{x}_\star),\ \Delta_q(\mathbf{x}_1, \mathbf{x}_\star) \}.
	\end{equation}
	 For every $k\geq 1$, we define the ball of radius $k$ centred at $\mathbf x_\star$ as the planar map $B_k(\mathbf x_\star)$ induced by keeping only the edges in $\mathbf q$ between vertices whose ends are at most at distance $k$ from $\mathbf x_\star$ (for the graph distance on $\mathbf q$). Note that this definition is not standard and differs from other references. Some of the faces of $B_k(\mathbf x_\star)$ were already in $\mathbf q$ but this procedure also creates new faces which are no longer necessarily of degree $4$. Call these new faces the \emph{external faces of $B_k(\mathbf x_\star)$}.  For every $k\geq 1$ write $C_k(\mathbf q)$ for the number of external faces and $P_k(\mathbf q)$ for the sum of the degrees of all external faces of $B_k(\mathbf x_\star)$. We extend these definitions to all $k\in \mathbb Z$, by setting by convention $C_k(\mathbf q)=1$ and $P_k(\mathbf q)=0$ whenever $k\leq 0$.\\
	
	\begin{figure}
		\centering
		\begin{minipage}[b]{0.49\textwidth}
			\includegraphics[width=\textwidth]{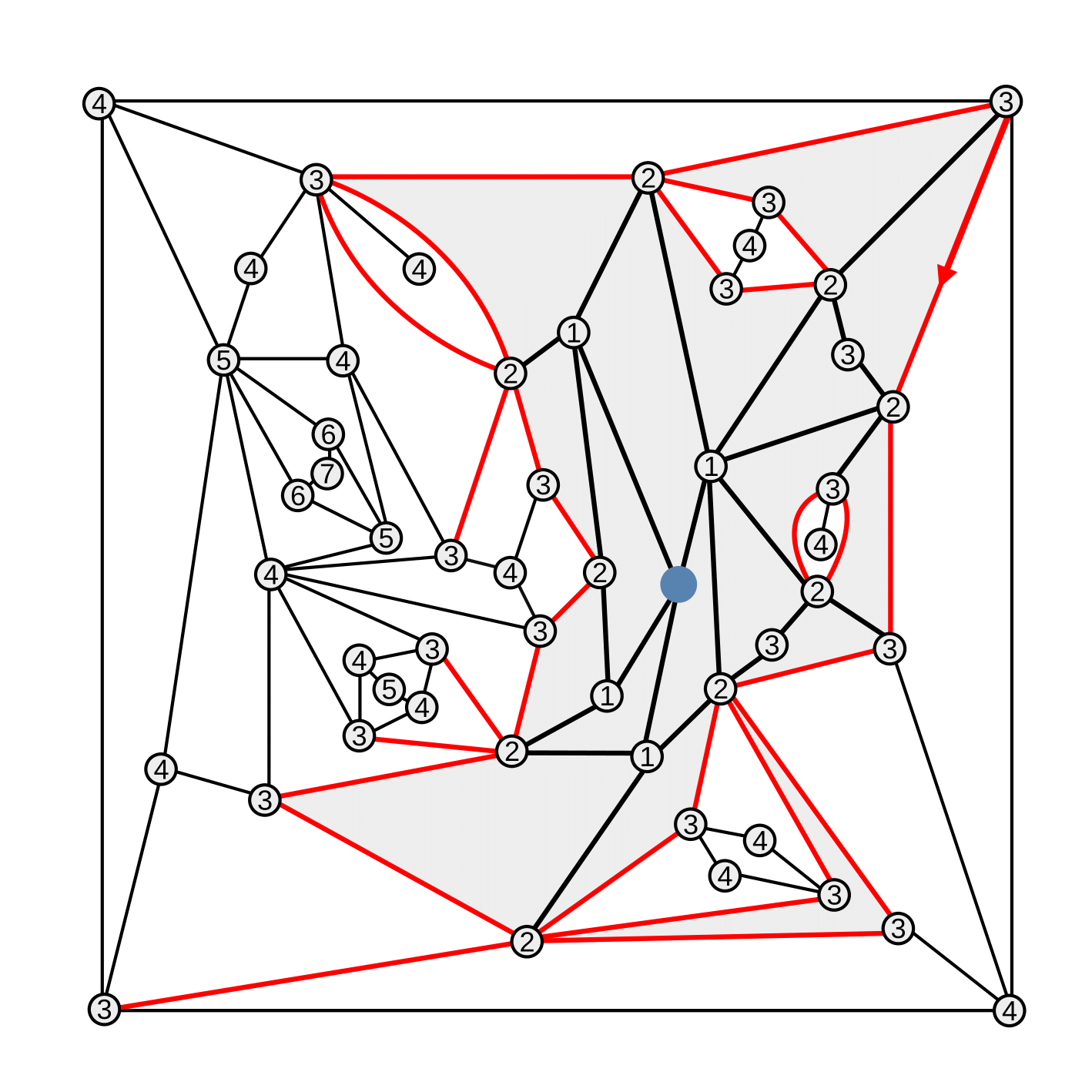}
		\end{minipage}
		\begin{minipage}[b]{0.49\textwidth}
			\includegraphics[width=\textwidth]{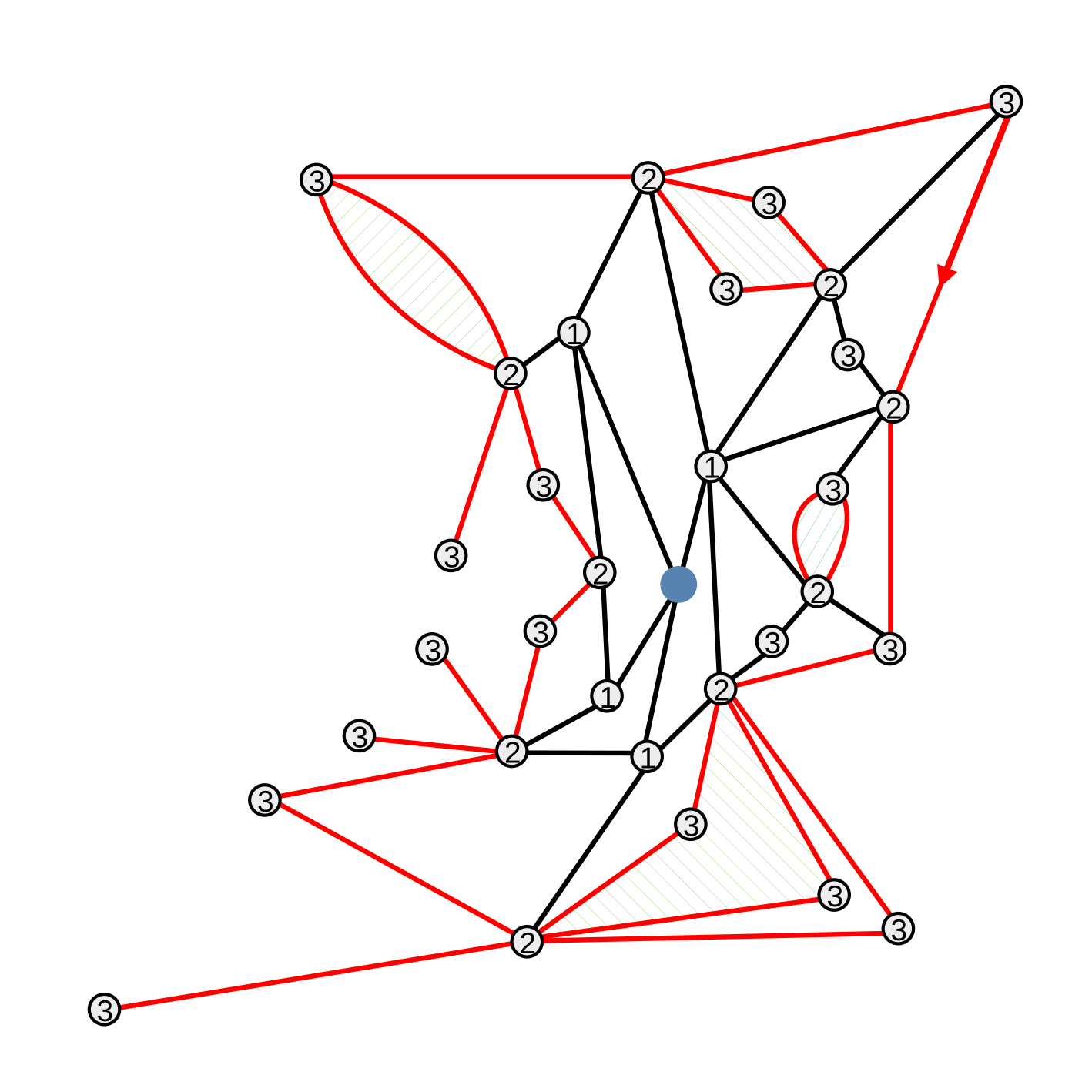}
		\end{minipage}
		
		\caption{An element $\mathbf q$ of $\mathcal Q_{50}^\bullet$, the distinguished vertex is in blue and the root edge is marked by an arrow. The numbers in the circles correspond to the distance to the distinguished point. The ball of radius $3$ is obtained by only keeping edges between vertices of label $3$ or less. In the example, this creates $5$ external faces of respective degrees $2, 2, 4, 4$ and $22$. We have in this case $C_3(\mathbf q)=5$ and $P_3(\mathbf q)=34$.}
	\end{figure}
	
	The Boltzmann measure on $\mathcal{Q}^\bullet$ is the unique probability measure $\mathbb{P}^\bullet$ assigning weight proportional to $12^{-|\mathbf{q}|}$ to each $\mathbf{q} \in \mathcal{Q}^\bullet$. In other words,
	\[
	\mathbb{P}^\bullet(\mathbf{q}) = \frac{1}{Z^\bullet} \cdot 12^{-|\mathbf{q}|},
	\]
	where $ \displaystyle
	Z^\bullet = \sum_{n \geq 1} \text{Card}(\mathcal{Q}_n^\bullet) \cdot 12^{-n}$,
	(in fact, it can be shown that $Z^\bullet = \frac{1}{2}$).
	
	\begin{proposition}
		\label{AppliRandomMaps}
		Let $\mathbf{q}$ be sampled according to $\mathbb{P}^\bullet$. Conditionally on the pair $(P_{d_\star}(\mathbf q), C_{d_\star}(\mathbf q))$, the two processes
		\[
		\left(P_{k-1+d_\star}(\mathbf{q}), C_{k-1+d_\star}(\mathbf{q})\right)_{k \geq 1} \quad \text{and} \quad \left(P_{d_\star-k}(\mathbf{q}), C_{d_\star-k}(\mathbf{q})\right)_{k \geq 1}
		\]
		are independent time-homogeneous Markov chains. Moreover,
		\[
		\left(P_{k-1+d_\star}(\mathbf{q}), C_{k-1+d_\star}(\mathbf{q})\right)_{k \geq 1}
		\overset{(d)}{=}
		\left(P_{d_\star-k}(\mathbf{q}), \tfrac{1}{2} P_{d_\star-k}(\mathbf{q}) - C_{d_\star-k}(\mathbf{q}) + 1\right)_{k \geq 1}.
		\]
	\end{proposition}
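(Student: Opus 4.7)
The plan is to use the Cori--Vauquelin--Schaeffer (CVS) bijection to transport the statement to the labelled-tree side and then invoke Corollary \ref{CoroMarkov}. Under the CVS bijection, a pointed quadrangulation $\mathbf{q}$ sampled from $\mathbb{P}^\bullet$ corresponds to a labelled plane tree $T$ whose law is $\Pi_0$ for the model of case 2 of Proposition \ref{ExplicitForm}: critical geometric offspring and i.i.d.\ uniform $\{-1, 0, 1\}$ displacements. The labels of $T$ are affinely related to graph distances to $\mathbf{x}_\star$ in $\mathbf{q}$, and $d_\star$ is recovered from $T$ as a simple function of the labels at the two endpoints of the root edge.

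The central step is to identify the ball statistics as functionals of the tree excursions at each level. Specifically, under CVS, the ball $B_{d_\star + k - 1}(\mathbf{x}_\star)$ corresponds to the truncation $T^{[k]}$ (up to the shift determined by $d_\star$), and the external faces of this ball are in bijection with the tree excursions above level $k$. The perimeters and external-face counts can therefore be read off from $(X_k^+, X_k^-)$, and one expects explicit formulas of the shape
\begin{equation*}
P_{d_\star + k - 1}(\mathbf{q}) = X_k^+ + X_k^-, \qquad C_{d_\star + k - 1}(\mathbf{q}) = \phi(X_k^+, X_k^-),
\end{equation*}
for an affine function $\phi$, together with the symmetric identities $P_{d_\star - k}(\mathbf{q}) = \check X_k^+ + \check X_k^-$ and $C_{d_\star - k}(\mathbf{q}) = \psi(\check X_k^+, \check X_k^-)$, where $\phi$ and $\psi$ are related by $\psi(y, x) = \tfrac{1}{2}(x + y) - \phi(x, y) + 1$.

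Given these identifications, the conditional independence and time-homogeneous Markov property of the two $(P, C)$-processes given $(P_{d_\star}(\mathbf{q}), C_{d_\star}(\mathbf{q}))$ (equivalently, given $(X_1^+, X_1^-)$) follow at once from Corollary \ref{CoroMarkov}. The equality in law follows from the symmetry assertion of the same corollary: since the displacement laws $\eta^{(d)}$ are symmetric, $(X_k^+, X_k^-)_{k \geq 1} \stackrel{(d)}{=} (\check X_k^-, \check X_k^+)_{k \geq 1}$, and applying the CVS formulas above converts this into exactly the identity stated in the proposition.

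The main obstacle is the CVS bookkeeping needed to derive the explicit formulas for $(P, C)$ in terms of $(X^+, X^-)$ (and in particular to pin down $\phi$), while correctly handling the shift by $d_\star$ and the boundary edges produced by cutting balls. This is standard but delicate, and parallels classical analyses of hulls in random maps (compare the appearance of the same generating function $g_{\nu_2}$ in \cite{Krikun} noted after Proposition \ref{ExplicitForm}); the crucial verification is that swapping "above" and "below" in the tree indeed exchanges $C$ with $\tfrac{1}{2}P - C + 1$, which is what makes the proposition's distributional identity a direct consequence of the tree-level symmetry.
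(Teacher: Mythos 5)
Your strategy is the same as the paper's: transport everything through the Schaeffer bijection to a labelled Galton--Watson tree with geometric offspring and uniform $\{-1,0,1\}$ displacements, express $(P_k,C_k)$ in terms of $(X^\pm,\check X^\pm)$, and conclude via Corollary~\ref{CoroMarkov}. However, there is a genuine gap: the entire substance of the paper's proof is the verification of the identities you defer as ``CVS bookkeeping'', namely that $\tfrac12 P_k(\mathbf q)$ equals the number of edges of $T$ between labels $k-d_\star$ and $k-d_\star+1$, and --- much less obviously --- that $C_k(\mathbf q)$ equals $X^+_{k-d_\star+1}$ (resp.\ $\check X^+_{d_\star-k}+1$ below the root level). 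The second identity is not a routine translation: it requires showing that the external faces of $B_k(\mathbf x_\star)$ are in bijection with the connected components of $T$ lying above label $k-d_\star+1$, i.e.\ with the tree excursions above that level. The paper proves this by an Euler-characteristic count inside each external face $\mathtt f$: the submap $\hat{\mathbf q}_{\mathtt f}$ has $F+1+p$ vertices, of which $F+1$ have label $\geq k-d_\star+1$, and the $F$ quadrangular faces of $\hat{\mathbf q}_{\mathtt f}$ give $F$ tree edges among these $F+1$ vertices, forcing them to form a single connected component; a separate argument shows distinct external faces give distinct components. Without this step you cannot pin down $\phi$, and in particular cannot verify that swapping above/below exchanges $C$ with $\tfrac12 P - C + 1$, which you yourself identify as the crucial point.

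Two smaller issues. First, your tentative formula $P_{d_\star+k-1}=X_k^++X_k^-$ is off by a factor of $2$: each face of $\mathbf q$ of label pattern $(m+1,m,m-1,m)$ contributes two corners (hence degree $2$) to the external face it borders, so the correct identity is $P_{d_\star+k-1}=2(X_k^++X_k^-)$; with your normalization the quantity $\tfrac12 P - C + 1$ in the statement would not even come out integer-valued. Second, the assertion that $B_{d_\star+k-1}(\mathbf x_\star)$ ``corresponds to the truncation $T^{[k]}$'' is not quite what is needed (and is not what the paper uses); what matters is only the correspondence between external faces and excursion components, which is where the work lies.
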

	
	\begin{proof}
		The idea is to apply Corollary~\ref{CoroMarkov} to the vertical edge profile associated with the tree derived from $\mathbf{q}$ via the so-called Schaeffer bijection.
		For our purposes, we need to recall the main features of this bijection, which relates quadrangulations to well-labelled trees. (A more detailed description can be found, e.g., in~\cite{ChassaingSchaeffer}.) Fix $n \geq 1$ and $q \in \mathcal{Q}_n^\bullet$. We will define a tree $T$ with vertex set $V(T) = V(\mathbf{q}) \setminus \{\mathbf{x}_\star\}$, where each edge of $T$ corresponds to a face of $q$.	To this end, for every vertex $v \in V(q)$, define
		\[
		\ell_v = \Delta_q(\mathbf{x}_\star, v) - d_\star,
		\]
		where $d_\star$ was defined in~\eqref{dstar}. Now consider a face of $q$ with vertices $u_1, u_2, u_3, u_4$ in clockwise order (possibly with $u_1 = u_3$ or $u_2 = u_4$). Then, up to a cyclic permutation of the $u_i$, we have two possibilities:
		\begin{enumerate}
			\item $(\ell_{u_1}, \ell_{u_2}, \ell_{u_3}, \ell_{u_4}) = (m, m-1, m, m-1)$ for some $m \in \mathbb{Z}$; in this case, we include the edge $(u_1, u_3)$ in $E(T)$.
			\item $(\ell_{u_1}, \ell_{u_2}, \ell_{u_3}, \ell_{u_4}) = (m+1, m, m-1, m)$ for some $m \in \mathbb{Z}$; in this case, we include the edge $(u_1, u_2)$ in $E(T)$.
		\end{enumerate}
		One can show that the graph $T$ with vertex set $V(T)$ and edge set $E(T)$ is a tree, which inherits a planar structure from the planar embedding of $q$. The root vertex of $T$ is the endpoint $\mathbf x_0$ of $\mathbf x_1$ of the root edge $e$ that is farther from $\mathbf x_\star$. There is a convention to choose the root edge of $T$ (connecting the root vertex to its "first" child) but we omit the details since this is not relevant in what follows (see \cite{ChassaingSchaeffer} or \cite{BettinelliThesis} for details). In particular, by definition of $d_\star$, the root vertex $\rho$ of $T$ satisfies $\ell_\rho = 0$, and the pair $(T, \ell)$ then defines an element of $\mathbb{T}_0^n$, the set of (well-)labelled plane trees rooted at $0$ with $n$ vertices.
		(We mention that this construction almost defines a bijection between elements of $\mathcal{Q}_n^\bullet$ and $\mathbb{T}_0^n$, except for the loss of the root edge orientation. Encoding this orientation as an element of $\{-1, 1\}$, yields a bijection between $\mathcal{Q}_n^\bullet$ and $\mathbb{T}_0^n \times \{-1, 1\}$.)
		
		More importantly, the key observation for our purposes is that when $\mathbf{q}$ is sampled as a Boltzmann quadrangulation, the associated labelled tree $(T, \ell_T)$ has the law of a Galton--Watson tree with geometric offspring distribution, conditioned to have at least one edge, and such that label increments along edges are independent and uniformly distributed in $\{-1, 0, 1\}$.
		We define the processes $X_m^+$, $X_m^-$, $\check{X}_m^+$ and $\check{X}_m^-$ associated with $T$ as in the previous sections and recall that by Corollary~\ref{CoroMarkov}, conditionally on $(X_1^+, X_1^-)$, the two processes
		\[
		(X_m^+, X_m^-)_{m \geq 1} \quad \text{and} \quad (\check{X}_m^-, \check{X}_m^+)_{m \geq 1}
		\]
		are independent, identically distributed, time-homogeneous Markov chains.
		
		We now explain how the quantities $P_k(\mathbf{q})$ and $C_k(\mathbf{q})$ relate to these Markov chains. Fix $k \geq 1$, and note that along the boundary of an external face of $B_k(\mathbf{q})$, labels alternate between the values $k - d_\star$ and $k - d_\star - 1$. In particular, $P_k(\mathbf{q})$ is equal to twice the number of corners with label $k - d_\star - 1$ among all external faces of $B_k(\mathbf{q})$.
		It is easy to check that these corners correspond exactly to those with label $k - d_\star - 1$ that lie in faces $f$ of $\mathbf{q}$ whose vertex labels are (up to cyclic permutations) of the form
		\[
		(k - d_\star + 1, \ k - d_\star, \ k - d_\star - 1, \ k - d_\star).
		\]
		Hence, $P_k(\mathbf{q})$ is exactly twice the number of such faces in $\mathbf{q}$.
			According to the Schaeffer bijection, these faces are in one-to-one correspondence with edges of $(T, \ell_T)$ connecting vertices labelled $k - d_\star + 1$ and $k - d_\star$. Therefore,
		\[
		\frac{1}{2} P_k(\mathbf{q}) =
		\begin{cases}
			\check{X}_{d_\star - k}^+ + \check{X}_{d_\star - k}^- & \text{if } k < d_\star, \\
			X_{k - d_\star + 1}^+ + X_{k - d_\star + 1}^- & \text{if } k \geq d_\star.
		\end{cases}
		\]
			Moreover, we claim that
		\begin{align}
			\label{Ck}
			C_k(\mathbf{q}) =
			\begin{cases}
				\check{X}_{d_\star - k}^+ + 1 & \text{if } k < d_\star, \\
				X_{k - d_\star + 1}^+ & \text{if } k \geq d_\star.
			\end{cases}
		\end{align}
		Assuming~\eqref{Ck}, we obtain:
		\begin{align*}
			&\left(P_{k - 1 + d_\star}(\mathbf{q}),\ C_{k - 1 + d_\star}(\mathbf{q})\right)_{k \geq 1}
			= \left(2(X_k^+ + X_k^-),\ X_k^+\right)_{k \geq 1}, \\
			&\left(P_{d_\star - k}(\mathbf{q}),\ C_{d_\star - k}(\mathbf{q})\right)_{k \geq 1}
			= \left(2(\check{X}_k^+ + \check{X}_k^-),\ \check{X}_k^+ + 1\right)_{k \geq 1}.
		\end{align*}
		Theorem~\ref{AppliRandomMaps} then follows directly from Corollary~\ref{CoroMarkov}.\\
		
		We now proceed to prove~\eqref{Ck}. Consider the connected components (which may be single vertices) obtained from $T$ by deleting all edges between a vertex labelled $k - d_\star$ and one labelled $k - d_\star + 1$. Consider the components above level $k - d_\star + 1$ (that is, those containing only vertices with labels greater than or equal to $k - d_\star + 1$). When $k < d_\star$, one of these components contains the root vertex of $T$, and the others are in one-to-one correspondence with the tree excursions above level $k - d_\star$ in the excursion forest decomposition introduced in Section~2.2. In particular, the number of such components is exactly $\check{X}_{d_\star - k}^+ + 1$.	When $k \geq d_\star$, the situation is simpler: the connected components are in one-to-one correspondence with the tree excursions above level $k - d_\star + 1$ in the excursion forest decomposition of Section~2.2. The number of such components is then $X_{k - d_\star + 1}^+$. In summary, proving~\eqref{Ck} amounts to showing that the connected components above label $k - d_\star + 1$ (obtained from $T$ by deleting edges between labels $k - d_\star$ and $k - d_\star + 1$) are in one-to-one correspondence with the external faces of $B_k(\mathbf{q})$.
		\begin{figure}[h]
			\centering
			\begin{minipage}[b]{0.32 \textwidth}
				\centering\includegraphics[width=0.8\textwidth]{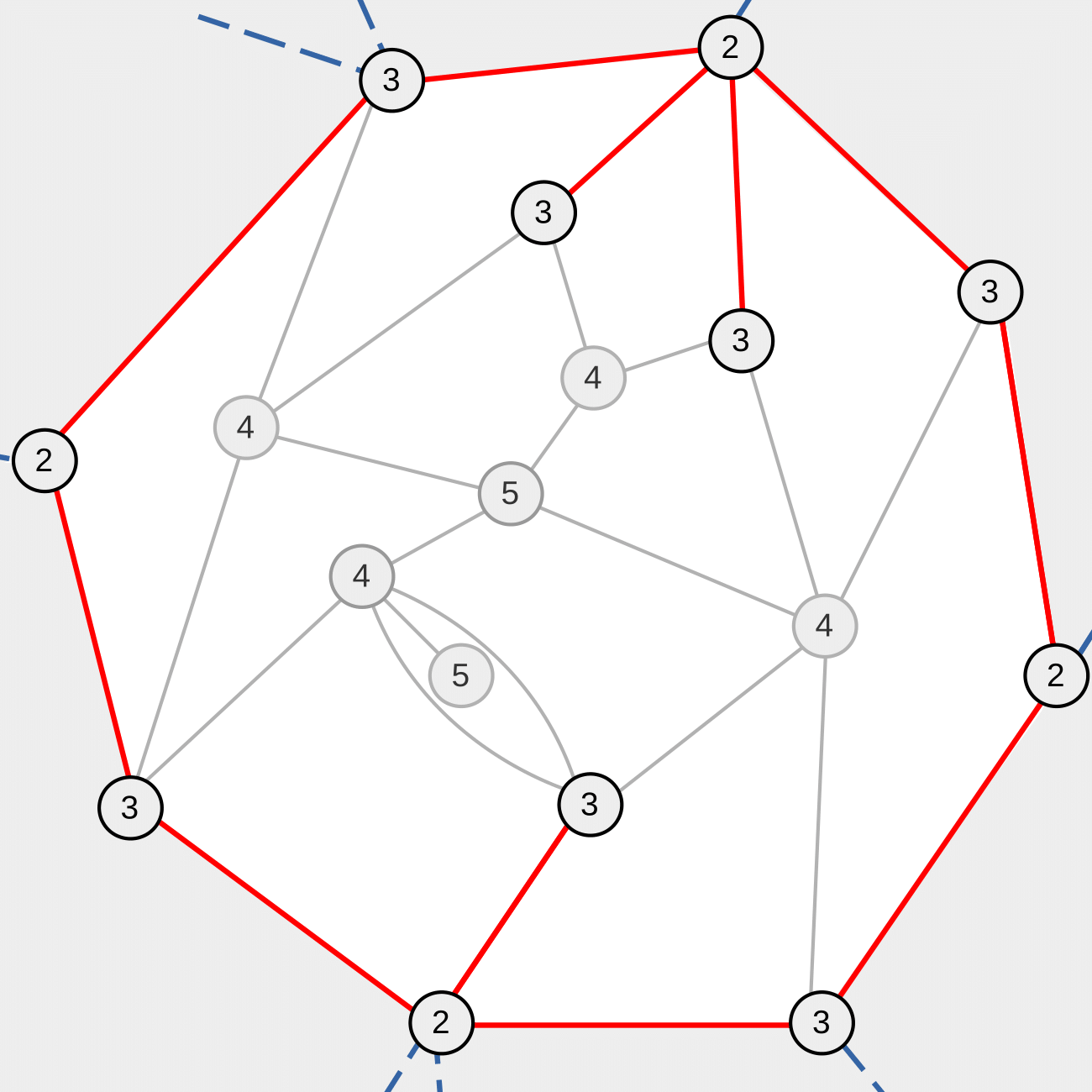}
			\end{minipage} \vrule
			\begin{minipage}[b]{0.32\textwidth}
				\centering\includegraphics[width=0.8\textwidth]{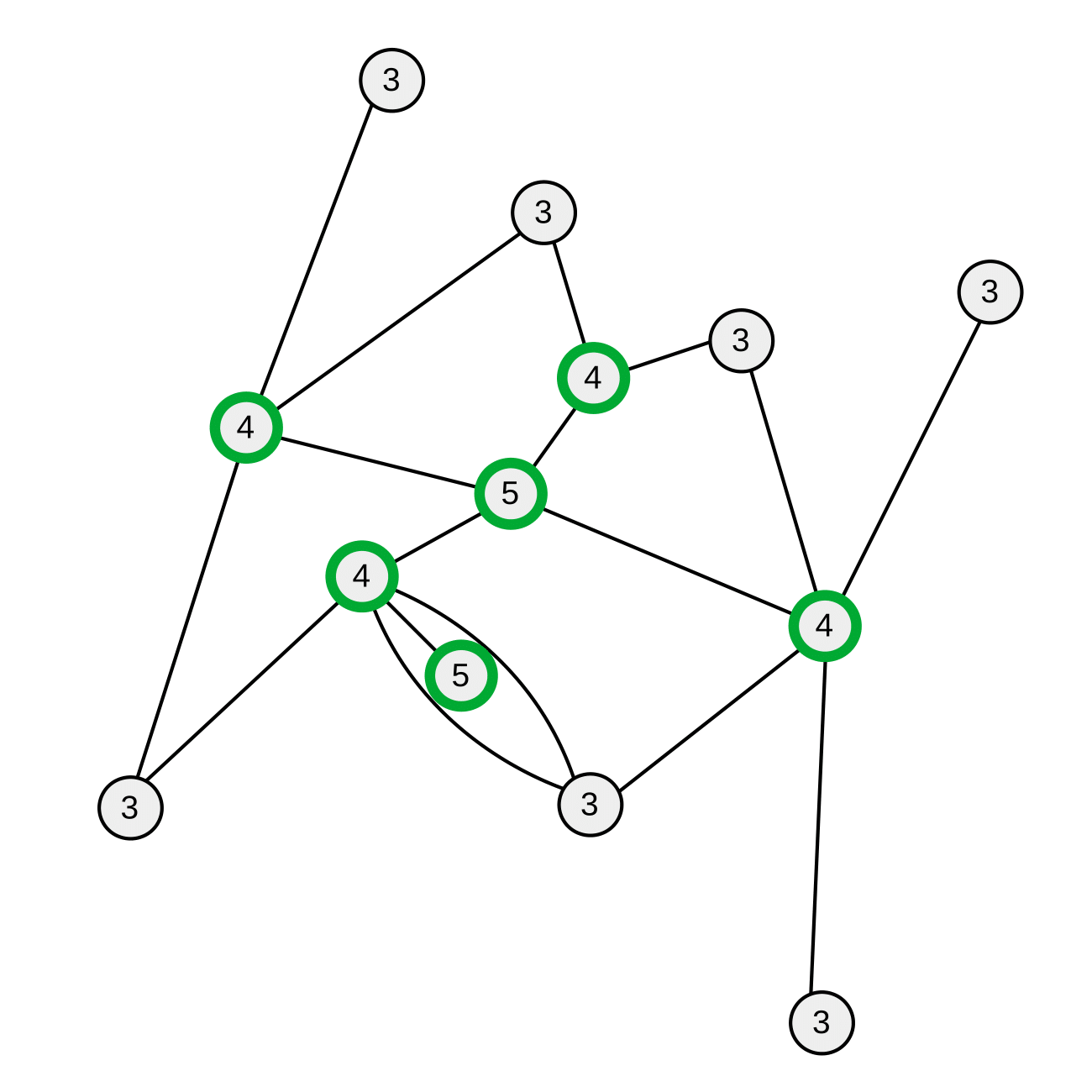}
			\end{minipage}\vrule
			\begin{minipage}[b]{0.32\textwidth}
				\centering\includegraphics[width=0.8\textwidth]{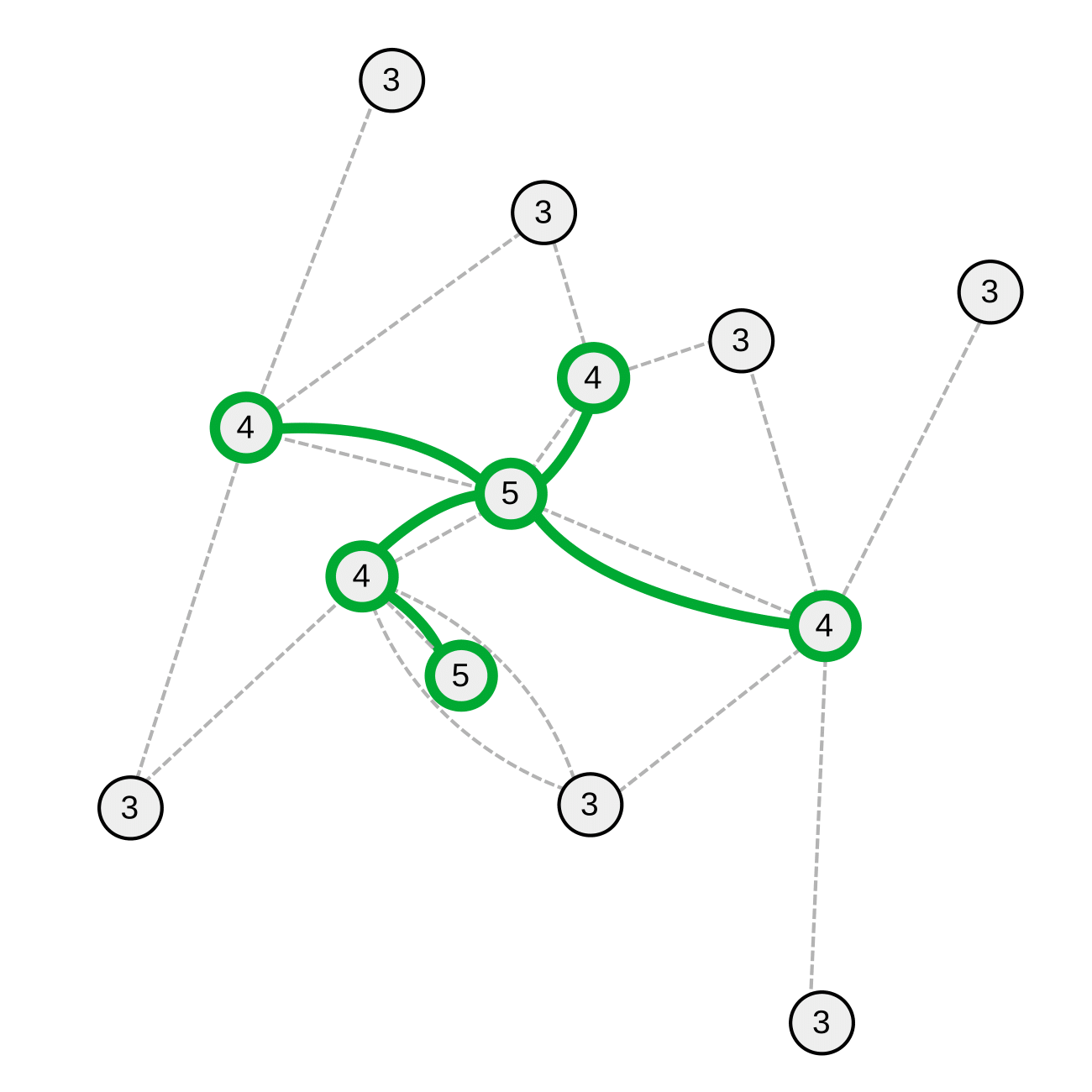}
			\end{minipage}%
			\caption{An example of a possible external face $\mathtt f$ (left) of perimeter $2p=14$ in the ball of radius $3$ (note that it may have pendant edges). Again, the numbers in the circles indicate the distance to the distinguished point $\mathbf x_\star$.  The map $\hat{\mathbf{q}}_{\mathtt f}$ (middle) is obtained by only keeping edges connecting vertices at distance at least $3$ from  $\mathbf x_\star$ that originally lie inside $\mathtt f$, it again has perimeter $14$. The number of internal faces is in this case $F=5$. The set $V_{\mathtt f}$ of vertices with label strictly greater than $k-d_\star$ is in green. The tree induced by the Schaeffer bijection (right, in green) connects all elements of $V_{\mathtt f}$.}
		\end{figure}
		
		To this end, consider an external face $\mathtt{f}$ of $B_k(\mathbf{q})$, and let $p \geq 1$ be such that $2p$ is the degree of $\mathtt{f}$. Let $\hat{\mathbf{q}}_{\mathtt{f}}$ be the submap of $\mathbf{q}$ obtained by keeping all edges that connect vertices with labels greater than or equal to $k - d_\star$ that originally lie inside $\mathtt{f}$ in $\mathbf{q}$ (see Fig. 4). It is easy to see that the map $\hat{\mathbf{q}}_{\mathtt{f}}$ also has a boundary of degree $2p$, details are left to the reader. Let $F$ be the number internal faces of degree $4$ in $\hat{\mathbf{q}}_{\mathtt{f}}$ (hence $\hat{\mathbf{q}}_{\mathtt{f}}$ has $F+1$ faces including the external face). Since each edge borders two faces, the number $E$ of edges in $\hat{\mathbf{q}}_{\mathtt{f}}$ is given by:
		\[
		E = \frac{1}{2}(4F + 2p) = 2F + p.
		\]
		By Euler's formula, the number of vertices in $\hat{\mathbf{q}}_{\mathtt{f}}$ is:
		\[
		\text{Card}(V(\hat{\mathbf{q}}_{\mathtt{f}})) =2 - (F + 1) + (2F + p) = F + 1 + p.
		\]
		Let $V_{\mathtt{f}}$ denote the subset of vertices in $\hat{\mathbf{q}}_{\mathtt{f}}$ with labels greater than or equal to $k - d_\star + 1$. In other words, $V_{\mathtt{f}}$ includes all vertices of $\hat{\mathbf{q}}_{\mathtt{f}}$ except the $p$ vertices of label $k-d_{\star}$ that were originally on the boundary of $\mathtt{f}$ and which belong to $\hat{\mathbf{q}}_{\mathtt{f}}$. Thus,
		\[
		\text{Card}(V_{\mathtt{f}}) = \text{Card}(V(\hat{\mathbf{q}}_{\mathtt{f}})) - p = F + 1.
		\]
		In particular, $V_{\mathtt{f}}$ is nonempty.

		Note that each internal face of $\hat{\mathbf{q}}_{\mathtt{f}}$ corresponds via the Schaeffer bijection to an edge in $E(T)$ connecting two vertices in $V_{\mathtt{f}}$. Hence, the subgraph of $T$ induced by $V_{\mathtt{f}}$ is a cycle-free graph with $F + 1$ vertices and at least $F$ edges. It follows that this subgraph is a tree, and therefore connected. This implies that all elements of $V_{\mathtt{f}}$ belong to one of the same connected component "above level $k-d_\star+1$" described earlier.
		
		Conversely, if $u$ and $v$ are two vertices of $\mathbf{q}$ that belong respectively to $V_{\mathtt{f}}$ and $V_{\mathtt{f'}}$ for distinct external faces $\mathtt{f}$ and $\mathtt{f'}$. It is easy to see, using the Schaeffer bijection, that any path in $T$ connecting $u$ and $v$ must pass through a vertex on the boundary of $\mathtt{f}$, and thus through a vertex with label less than or equal to $k - d_\star$. This implies that $u$ and $v$ belong to different connected components above label $k-d_\star+1$.

		The proof of~\eqref{Ck} follows from the discussion above.
	\end{proof}
	
	\begin{small}
		\printbibliography
	\end{small}
	
\end{document}